
\documentclass{amsart}

\usepackage{apacite}
\usepackage{natbib}
\usepackage{comment}
\usepackage{hyperref}
\usepackage{color}


\usepackage{amssymb,amsmath}

\usepackage{kbordermatrix}
\newtheorem{thm}{Theorem}
\newtheorem{cor}{Corollary}
\newtheorem{lem}{Lemma}

\newtheorem{prop}{Proposition}
\newtheorem{exmp}{Example}
\newtheorem{defn}{Definition}

\newtheorem{claim}{Claim}
\newtheorem{fact}{Fact}
\newtheorem{con}{Conjecture}



\newcommand{\R}{\mathbb R}

\newcommand{\we}{\ensuremath{\prec_W}}
\def\row#1#2{{#1}_1,\,\ldots,{#1}_{#2}}
\def\brow#1#2{{\bf {#1}}_1,{\bf #1}_2,\,\ldots,{\bf {#1}}_{#2}}

\def\bsum#1#2{{\bf {#1}}_1+{\bf {#1}}_2+\cdots
+{\bf {#1}}_{#2}}
%


\begin{document}


\title{Simplicial Complexes Obtained from Qualitative Probability Orders}

\author{Paul H. Edelman}
 \email{paul.edelman@vanderbilt.edu}

\author{ Tatyana Gvozdeva}
\email{t.gvozdeva@math.auckland.ac.nz}

 \author{ Arkadii Slinko}
 \email{a.slinko@auckland.ac.nz}



%
%

\maketitle

\begin{abstract}
In this paper we inititate the study of abstract simplicial
complexes which are initial segments of qualitative probability
orders. This is a natural class that contains the threshold
complexes and is contained in the shifted complexes, but is equal
to neither.  In particular we construct a  qualitative probability
order on 26 atoms that has an initial segment which is not a
threshold simplicial complex. Although 26 is probably not the
minimal number for which such example exists we provide some
evidence that it cannot be much smaller. We prove some necessary
conditions for this class and make a conjecture as to a
characterization of them.  The conjectured characterization relies
on some ideas from cooperative game theory.
\end{abstract}


\section{Introduction}
\label{intro}

The concept of qualitative (comparative) probability takes its
origins in attempts of de Finetti (\citeyear{dF}) to axiomatise
probability theory. It also played an important role in the
expected utility theory of  \citet[p.32]{Sa}. The essence of a
qualitative  probability is that it does not give us numerical
probabilities but instead provides us with the information, for
every pair of events, which one is more likely to happen.  The
class of qualitative probability orders is  broader than the class
of probability measures for any $n\ge 5$ \citep{KPS}. Qualitative
probability orders on finite sets are now recognised as an
important combinatorial object \citep{KPS,PF1,PF2} that finds
applications in areas as far apart from probability theory as the
theory of Gr\"obner bases \citep[e.g.,][]{Mac}.

Another important combinatorial object, also defined on a finite
set is an abstract simplicial complex. This is a set of subsets of
a finite set, called faces, with the property that a subset of a
face is also a face. This concept is dual to the concept of a
simple game whose winning coalitions form a set of subsets of a
finite set   with the property that if a coalition is winning,
then every superset of it is also a winning coalition. The most
studied class of simplicial complexes is the class of threshold
simplicial complexes. These arise when we assign weights to
elements of a finite set, set a threshold and define faces as
those subsets whose combined weight is not achieving the
threshold.

Given a qualitative probability order one may obtain a simplicial
complex in an analogous way. For this one has to choose a
threshold---which now will be a subset of our finite set---and
consider as faces all subsets that are earlier than the threshold
in the given qualitative probability order. This initial segment
of the qualitative probability order will, in fact, be  a
simplicial complex.  The collection of complexes arising as
initial segments of probability orders  contains threshold
complexes and is contained in the well-studied class of shifted
complexes \citep{Klivans05,Klivans07}. A natural question is
therefore to ask if this is indeed a new class of complexes
distinct from both the threshold complexes and the shifted ones.

In this paper we give an affirmative answer to both of these
questions. We present an example of a shifted complex on $7$
points that is not the initial segment of any qualitative
probability order.  On the other hand we also construct  an
initial segment of a qualitative  probability order  on $26$ atoms
that is not threshold. We also show that such example cannot be
too small, in particular, it is unlikely that one can be found on
fewer than $18$ atoms.

The structure of this paper is as follows. In Section 2 we
introduce the basics of qualitative probability orders. In Section
3 we consider abstract simplicial complexes and give necessary and
sufficient conditions for them being threshold. In Section 4 we
give a construction that will further provide us with examples of
qualitative probability orders that are not related to any
probability measure.  Finally in Sections 5 and 6 we present our
main result which is an example of a qualitative  probability
order  on $26$ atoms that is not threshold. Section 7 concludes
with a conjectured characterization  of initial segment complexes
that is motivated by work in the theory of cooperative games.


\section{Qualitative  Probability Orders and Discrete Cones}

In this paper all our objects are defined on the set $[n]=\{1,2,\ldots, n\}$. By $2^{[n]}$ we denote the set of all subsets of $[n]$.
An order\footnote{An order in this paper is any reflexive, complete and transitive
binary  relation. If it is also anti-symmetric, it is called linear order.} $\preceq$ on $2^{[n]}$ is called a {\em qualitative probability order} on ${[n]}$ if
\begin{equation}
\label{nontriv}
 \emptyset \preceq A
\end{equation}
for every nonempty subset $A$ of ${[n]}$, and $\preceq$ satisfies de Finetti's axiom,
namely for all
$A,B,C\in 2^{[n]}$
\begin{equation}
\label{deFeq}
A\preceq B \ \Longleftrightarrow \ A\cup C\,\preceq\, B\cup C
\  \mbox{whenever}\ \ (A\cup B)\cap C=\emptyset\,.
\end{equation}

Note that if we have a probability measure ${\bf p}=(\row pn)$ on ${[n]}$,
where $p_i$ is the probability of $i$, then we know the
probability $p(A)$ of every event $A$ and $p(A)=\sum_{i\in A}p_i$.
We may now define a relation $\preceq$ on $2^{[n]}$ by
\[
A\preceq B \quad \mbox{if and only if}\quad  p(A)\le p(B);
\]
obviously $\preceq$ is a qualitative probability order  on
${[n]}$, and any such order is called {\em representable} \citep[e.g.,][]{PF1,GR}.
Those not obtainable in this way are called {\em non-representable}.
 The class of qualitative probability orders is  broader
than the class of probability measures for any $n\ge 5$ \citep{KPS}.
A non-representable qualitative probability order $\preceq$ on ${[n]}$
is said to {\em almost agree} with the measure ${\bf p}$ on ${[n]}$ if
\begin{equation}
\label{almost-repr}
A\preceq B \Longrightarrow p(A)\le p(B).
\end{equation}
If such a measure ${\bf p}$ exists, then the order $\preceq$
is said to be {\em almost representable}. Since the arrow in (\ref{almost-repr})
is only one-sided it is perfectly possible for an almost representable order to
have $A\preceq B$ but not $B\preceq A$ while $p(A)=p(B)$. \par\medskip

We begin with some  standard properties of qualitative probability orders which we will need subsequently.
Let $ \preceq$ be a qualitative probability order on $2^{[n]}$. As usual the following two relations can be derived from it. We write $A\prec B$ if $A\preceq B$ but not $B\preceq A$ and $A\sim B$ if $A\preceq B$ and $B\preceq A$.

\begin{lem}
\label{strict-equiv}
Suppose that $ \preceq$ is a qualitative probability order on $2^{[n]}$, $A, B, C, D \in 2^{[n]}$, $A \preceq B$, $C \preceq D$ and $B \cap D = \emptyset$. Then $A \cup C \preceq B \cup D$. Moreover,  if $A \prec B$ or $C \prec D$, then $A \cup C \prec B \cup D$.
\end{lem}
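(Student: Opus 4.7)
The plan is to apply de Finetti's axiom (\ref{deFeq}) twice, once to each hypothesis, with carefully chosen auxiliary sets that exploit the disjointness $B \cap D = \emptyset$. First I would derive the monotonicity property $X \subseteq Y \Rightarrow X \preceq Y$ from axiom (\ref{nontriv}): apply (\ref{deFeq}) to $\emptyset \preceq Y \setminus X$ with the disjoint auxiliary set $X$ to get $X \preceq Y$.

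Next, from $A \preceq B$, take $E_1 := D \setminus A$: this is disjoint from $A$ by construction and from $B$ since $B \cap D = \emptyset$. Axiom (\ref{deFeq}) then yields $A \cup D \preceq B \cup (D \setminus A)$, and since $B \cup (D \setminus A) \subseteq B \cup D$, monotonicity gives $A \cup D \preceq B \cup D$. Symmetrically, from $C \preceq D$ with the auxiliary set $E_2 := B \setminus C$ (disjoint from $C$ by construction and from $D$ since $B \cap D = \emptyset$) one obtains $B \cup C \preceq B \cup D$. These two conclusions do not chain directly to the goal, so a finer combination is needed.

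To complete the argument, I would pass to the cancelled forms $A \setminus B \preceq B \setminus A$ and $C \setminus D \preceq D \setminus C$ (obtained from the hypotheses by applying (\ref{deFeq}) in the cancelling direction to remove the common parts $A \cap B$ and $C \cap D$ respectively). Crucially, $B \setminus A$ and $D \setminus C$ are disjoint, being subsets of $B$ and $D$. Now enlarge each cancelled relation using (\ref{deFeq}) with disjoint auxiliary sets drawn from the other pair, chain the resulting inequalities via transitivity, and finally cancel the extraneous terms to obtain the cancelled form of the goal, namely $(A \setminus B \setminus D) \cup (C \setminus D \setminus B) \preceq (B \setminus A \setminus C) \cup (D \setminus C \setminus A)$, which by (\ref{deFeq}) is equivalent to $A \cup C \preceq B \cup D$.

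The main obstacle is this combination step. The naive intermediate $A \cup C \preceq A \cup D$ is \emph{not} implied by $C \preceq D$ alone: for instance with $A = D = \{3\}$ and $C = \{1,2\}$ satisfying $\{1,2\} \preceq \{3\}$, one has $A \cup C = \{1,2,3\} \not\preceq \{3\} = A \cup D$ whenever $\{1,2\}$ has positive probability. Hence the proof must use both hypotheses simultaneously, with the structural input $B \cap D = \emptyset$ ensuring that the enlargements required for de Finetti remain disjoint from the relevant right-hand sides. The moreover clause follows because (\ref{deFeq}) is an equivalence and so preserves strictness: a single strict step in the hypothesis propagates through the chain of de Finetti applications, monotonicity, and transitivity to yield $A \cup C \prec B \cup D$.
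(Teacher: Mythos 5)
There is a genuine gap: the crucial combination step is a plan, not an argument, and you acknowledge as much ("the main obstacle is this combination step"). Passing to the cancelled forms $A_0 = A\setminus B$, $B_0 = B\setminus A$, $C_0 = C\setminus D$, $D_0 = D\setminus C$ does not actually remove the difficulty, because the sets $A_0$ and $C_0$ can still intersect (any $x \in A\cap C$ with $x \notin B\cup D$ lies in both). When that happens, the auxiliary set $C_0\setminus B_0$ that you would need to enlarge the relation $A_0 \preceq B_0$ fails to be disjoint from $A_0 \cup B_0$, so de Finetti's axiom cannot be applied, and the chain you describe cannot be assembled. In other words, your reduction simply reproduces the original problem with the extra knowledge that $A_0\cap B_0 = C_0\cap D_0 = \emptyset$, which is not the disjointness that is actually at stake. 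Your first two runs at the problem ($A\cup D \preceq B\cup D$ and $B\cup C \preceq B\cup D$) are correctly discarded, but the replacement step is never carried out.

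The missing idea is a case split on whether $A \cap C = \emptyset$. The paper's proof first assumes $A\cap C = \emptyset$; then with $C' = C\setminus B$ and $B' = B\setminus C$ one legitimately has $(A\cup B)\cap C' = \emptyset$ and $(C\cup D)\cap B' = \emptyset$, yielding the chain $A\cup C' \preceq B\cup C' = B'\cup C \preceq B'\cup D$, after which adding back $I = B\cap C$ (disjoint from both sides) gives $A\cup C \preceq B\cup D$. The case $A\cap C \ne \emptyset$ is then handled by replacing $A$ with $A' = A\setminus C$, using $\emptyset \preceq A\cap C$ and de Finetti to get $A' \preceq B$, and noting $A\cup C = A'\cup C$ with $A'\cap C = \emptyset$, reducing to the first case. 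You would also need to track strictness through this case split to justify the moreover clause; "a single strict step propagates through the chain" is asserted but not checked against the specific de Finetti enlargements and the monotonicity step you use, where the latter only gives $\preceq$, not $\prec$, so one must verify that at least one strict link survives.
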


\begin{proof}
Firstly, let us consider the case when $A\cap C=\emptyset$. Let $B'=B\setminus C$ and $C'=C\setminus B$ and $I=B\cap C$. Then  by  (\ref{deFeq})  we have
\[
A\cup C'\preceq B\cup C'=B'\cup C\preceq B'\cup D
\]
where we have $A\cup C' \prec B'\cup D$ if $A\prec B$ or $C\prec D$. Now we have
\[
A\cup C'\preceq B'\cup D \Leftrightarrow A\cup C=(A\cup C')\cup I\preceq (B'\cup D) \cup I=B\cup D.
\]
Now let us consider the case when $A\cap C\ne \emptyset$. Let $A'=A\setminus C$. By (\ref{nontriv}) and (\ref{deFeq})  we now have
$A'\prec B$. Since now we have $A'\cap C=\emptyset$ so by the previous case
\[
A\cup C=A'\cup C\prec B\cup C\preceq B\cup D.
\]
In this case we always obtain a strict inequality.
\end{proof}
A weaker version of this lemma  can be found in \cite{Mac}[Lemma 2.2].

\begin{defn}
\label{cancel}
A sequence of subsets $(\row Aj; \row Bj)$ of $[n]$ of even length $2j$ is said to be a {\em trading transform} of length $j$ if for every $i\in [n]$
$$
\left|\{k\mid i\in A_k\}\right|=\left|\{k\mid i\in B_k\}\right|.
$$
In other words, sets $\row Aj$ can be converted into $\row Bj$ by rearranging their elements.
We say that an order $\preceq $ on $2^{[n]}$ satisfies the $k$-th cancellation condition $CC_k$ if there does not exist a trading transform
$(\row Ak;\row Bk)$ such that $A_i\preceq B_i$ for all $i\in [k]$ and $A_i\prec B_i$ for at least one $i\in [k]$.
\end{defn}

The key result of \cite{KPS} can now be reformulated as follows.

\begin{thm}[Kraft-Pratt-Seidenberg]
\label{KPStheorem}
A qualitative probability order $\preceq $ is representable if and only if it satisfies $CC_k$ for all $k=1,2,\ldots $.
\end{thm}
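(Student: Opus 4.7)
The plan is to prove the two directions separately. The forward implication is a single summation, while the converse converts failure of representability into a $CC_k$-violating trading transform via a theorem of the alternative. Concretely, for the forward direction, assume $\preceq$ is represented by a probability measure $\mathbf{p}$ and, for contradiction, that $CC_k$ fails via a trading transform $(A_1,\ldots,A_k;B_1,\ldots,B_k)$. Identifying subsets with characteristic vectors, the trading-transform axiom reads $\sum_{i=1}^k \chi_{A_i}=\sum_{i=1}^k \chi_{B_i}$; pairing with $\mathbf{p}$ gives $\sum_i p(A_i)=\sum_i p(B_i)$. But representability also yields $p(A_i)\le p(B_i)$ for every $i$ with strict inequality for at least one $i$, so $\sum_i p(A_i)<\sum_i p(B_i)$, a contradiction.

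For the converse, assume every $CC_k$ holds. Seeking a representing $\mathbf{p}\in\mathbb{R}^n$ amounts to the feasibility problem
\[
\langle \mathbf{p},\chi_B-\chi_A\rangle > 0 \text{ for every }A\prec B,\quad \langle \mathbf{p},\chi_D-\chi_C\rangle = 0 \text{ for every }C\sim D,\quad p_i\ge 0 \text{ for every }i.
\]
By a Motzkin-type theorem of the alternative, this system is infeasible iff there exist non-negative reals $\alpha_{A,B}$ (not all zero) on strict pairs, signed reals $\beta_{C,D}$ on equivalence pairs, and non-negative reals $\gamma_i$ on coordinates such that
\[
\sum_{A\prec B}\alpha_{A,B}(\chi_B-\chi_A) + \sum_{C\sim D}\beta_{C,D}(\chi_D-\chi_C) + \sum_{i\in[n]}\gamma_i e_i = 0.
\]
All vectors are integral, so the coefficients may be taken rational and, after clearing denominators, integral.

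Finally I extract the cancellation violation by reading the integer dependence as a multiset equation. List $\alpha_{A,B}$ copies of the pair $(A,B)$ for each strict pair, $|\beta_{C,D}|$ copies of $(C,D)$ or $(D,C)$ for each equivalence pair (oriented according to $\operatorname{sgn}\beta_{C,D}$, noting both orientations are legitimate since $C\sim D$), and $\gamma_i$ copies of the pair $(\emptyset,\{i\})$ for each coordinate $i$; this last insertion is admissible because axiom (\ref{nontriv}) gives $\emptyset\preceq\{i\}$. The resulting list $(A_1,\ldots,A_k;B_1,\ldots,B_k)$ satisfies $A_t\preceq B_t$ for every $t$, with strict inequality whenever the pair arose from a nonzero $\alpha_{A,B}$, and the displayed linear identity translates precisely to $\sum_t\chi_{A_t}=\sum_t\chi_{B_t}$. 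This is a trading transform violating $CC_k$, contradicting the hypothesis.

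The principal obstacle is the last step: translating an algebraic infeasibility certificate into an honest combinatorial trading transform that preserves at least one strict inequality. Using axiom (\ref{nontriv}) to absorb the coordinate-positivity slack $\sum_i \gamma_i e_i$ via the legitimate pairs $(\emptyset,\{i\})$ is the key technical device, and the fact that all characteristic vectors lie in $\mathbb{Z}^n$ is what allows rational alternatives to be promoted to integer ones so that the certificate can be read as a genuine multiset identity.
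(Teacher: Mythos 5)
The paper does not prove this theorem; it is a reformulation of the Kraft--Pratt--Seidenberg result and is simply cited from \cite{KPS}. Your argument is correct and reproduces what is essentially the original KPS proof: the forward direction is the observation that any representing measure is additive over a trading transform, and the converse runs a Motzkin-type theorem of the alternative on the representability feasibility system, with integrality of the constraint matrix used to promote a rational certificate to an integral one that can be read off as a trading transform.

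Two small remarks. First, the Motzkin step tacitly assumes the system has at least one strict-inequality row, i.e.\ that $\emptyset\prec[n]$. Under the axioms as written in this paper that is not guaranteed: the trivial order in which every subset is equivalent satisfies both qualitative-probability axioms, vacuously satisfies every $CC_k$ (there are no strict pairs to put in a transform), yet admits no representing probability measure since only $\mathbf p={\bf 0}$ solves your feasibility system. KPS exclude this by an explicit nondegeneracy axiom, and your proof implicitly needs it too. Second, the constraints $p_i\ge 0$ are already consequences of the order constraints $p(\emptyset)\le p(\{i\})$ coming from axiom (\ref{nontriv}), so one may drop them from the primal; the dual certificate then has no $\gamma_i e_i$ terms and the device of absorbing them via pairs $(\emptyset,\{i\})$ becomes unnecessary. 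It is a perfectly legitimate move, just not a required one.
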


It was also shown in \citet[Section 2]{PF1} that $CC_2$
and $CC_3$  hold for linear qualitative probability orders. It follows
from de Finetti's axiom and properties of linear orders.  It can be shown that a qualitative probability order satisfies $CC_2$ and $CC_3$ as well.  Hence $CC_4$
is the first nontrivial cancellation condition.
As  was noticed in \cite{KPS}, for $n<5$ all qualitative probability
orders are representable, but for $n=5$ there are non-representable ones. For $n=5$ all orders are still
almost representable \cite{PF1} which is no longer true for $n=6$ \cite{KPS}. \par\medskip

\medskip

It will be useful for our constructions to rephrase some of these conditions in vector language. To every such linear order $\preceq$, there corresponds a
{\em discrete cone} $C(\preceq)$ in $T^n$, where
$T=\{-1,0,1\}$, as defined in \cite{PF1}.

\begin{defn} A subset $C\subseteq T^n$ is said to be a discrete cone
if the following properties hold:
\begin{enumerate}
\item[{\rm D1.}] $\{{\bf e}_1,\ldots, {\bf e}_n\}\subseteq C$ and $\{-{\bf e}_1, \ldots, -{\bf e}_n\} \cap C = \emptyset$,
where $\{{\bf e}_1,\ldots,{\bf e}_n\}$ is the standard basis of $\R^n$,
\item[{\rm D2.}] $\{-{\bf x},{\bf x}\}\cap C\ne\emptyset$ \
for every ${\bf x}\in T^n$,
\item[{\rm D3.}] ${\bf x}+{\bf y}\in C$ whenever ${\bf x},{\bf y}\in C$ and
${\bf x}+{\bf y}\in T^n$.
\end{enumerate}
\end{defn}
We note that  \cite{PF1} requires ${\bf 0}\notin C$  because
his orders are anti-reflexive. In our case, condition D2 implies
${\bf 0}\in C$.
\medskip

Given a qualitative probability order $\preceq $ on $2^{[n]}$, for every pair of subsets $A,B$ satisfying $B \preceq A$
we construct a characteristic vector of this pair $\chi(A,B)=\chi(A)\!-\!\chi(B)\in
T^n$. We define the set $C(\preceq)$ of all characteristic
vectors $\chi(A,B)$, for $A,B\in 2^{[n]}$ such that $B\preceq A$.
The two axioms of qualitative probability  guarantee that $C(\preceq)$ is  a
discrete cone \citep[see][Lemma~2.1]{PF1}.

Following \cite{PF1},  the cancellation conditions can be reformulated as
follows:

\begin{prop}
A qualitative probability order $\preceq  $ satisfies the $k$-th
cancellation condition $CC_k$ if and only if  there does not exist
a set $\{\brow xk\}$ of nonzero vectors in $C(\preceq )$ such that
\begin{equation}
\label{axm}
\bsum xk={\bf 0}
\end{equation}
and $-{\bf x}_i \notin C(\preceq) $ for at least one $i$.
\end{prop}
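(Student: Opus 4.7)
The plan is to establish a direct dictionary between the data of a trading transform $(A_1,\ldots,A_k;B_1,\ldots,B_k)$ and a $k$-tuple of vectors ${\bf x}_1,\ldots,{\bf x}_k$ in $T^n$. For any pair $(A_i,B_i)$, we may first replace it by $(A_i\setminus B_i, B_i\setminus A_i)$; by de Finetti's axiom this does not affect the relation $A_i\preceq B_i$, and it clearly does not affect the trading-transform property. We may thus assume each pair consists of disjoint sets, and we set ${\bf x}_i=\chi(B_i)-\chi(A_i)\in T^n$. Conversely, every ${\bf x}\in T^n$ decomposes canonically as $\chi(B)-\chi(A)$ with $A\cap B=\emptyset$, by reading off the coordinates equal to $-1$ and $+1$ respectively.

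Under this dictionary the three relevant properties match one-for-one. First, ${\bf x}_i\in C(\preceq)$ is exactly the condition $A_i\preceq B_i$, by the very definition of the cone. Second, the coordinatewise equality $\sum_i\chi(A_i)=\sum_i\chi(B_i)$, which is the defining property of a trading transform, is equivalent to $\sum_i{\bf x}_i={\bf 0}$. Third, $-{\bf x}_i\notin C(\preceq)$ is the failure of $B_i\preceq A_i$, which combined with $A_i\preceq B_i$ yields the strict relation $A_i\prec B_i$. Both implications in the proposition are then immediate readings of this correspondence: a trading transform violating $CC_k$ produces the required family of vectors, and conversely any such family decodes into a trading transform witnessing the failure of $CC_k$.

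The one subtlety, and the only place any care is needed, is the \emph{nonzero} qualifier. Pairs with $A_i=B_i$ correspond to ${\bf x}_i={\bf 0}$ and are inert in both formulations: they preserve the zero-sum condition and cannot themselves carry a strict inequality, since $A_i\prec B_i$ forces $A_i\neq B_i$ and hence ${\bf x}_i\neq{\bf 0}$. One may therefore freely insert or discard such trivial data without affecting either the hypothesis or the conclusion on either side, so the passage between the two formulations is lossless. This is the only potential obstacle in an otherwise mechanical translation, and it is pure bookkeeping.
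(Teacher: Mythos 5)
Your dictionary is exactly the intended one; the paper itself gives no proof for this proposition, simply attributing the reformulation to Fishburn (\cite{PF1}), so there is no in-text argument to compare against. The key observations are all in place: passing to $(A_i\setminus B_i, B_i\setminus A_i)$ is harmless by de Finetti's axiom and leaves $\chi(B_i)-\chi(A_i)$ unchanged; membership ${\bf x}_i\in C(\preceq)$ encodes $A_i\preceq B_i$; the zero-sum condition is the trading-transform condition coordinatewise; and $-{\bf x}_i\notin C(\preceq)$ encodes strictness $A_i\prec B_i$.

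The one place you should be a bit more careful is the ``nonzero'' bookkeeping, which you flag but then dismiss a little too quickly. Discarding a trivial pair $A_i=B_i$ from a length-$k$ trading transform does \emph{not} leave the length invariant: it produces a trading transform of length $k-m$ (where $m$ is the number of trivial pairs), and hence only $k-m$ nonzero vectors. So the passage is not literally ``lossless'' at fixed $k$. The translation matches the statement of the proposition only if one reads $\{{\bf x}_1,\ldots,{\bf x}_k\}$ as a family of \emph{at most} $k$ nonzero vectors. That reading is the right one: $CC_k$ is downward closed in the sense that a failure of $CC_{k-m}$ can always be padded with trivial pairs $(\emptyset,\emptyset)$ to a failure of $CC_k$, so failure of $CC_k$ is equivalent to the existence of a nontrivial-pair trading transform of some length $j\le k$, which under your dictionary is precisely a set of $j\le k$ nonzero vectors in $C(\preceq)$ summing to ${\bf 0}$ with some $-{\bf x}_i\notin C(\preceq)$. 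Spelling out that observation closes the only real gap; the rest of the argument is correct and is the standard one.
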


Geometrically, a qualitative probability order $\preceq$ is representable if
and only if there exists a positive  vector
${\bf u}\in \R^n$ such that
\[
{\bf x}\in C(\preceq)  \Longleftrightarrow ({\bf u},{\bf x})\ge 0
\quad \mbox{for all}\, \ {\bf x}\in T^n\setminus\{{\bf 0}\},
\]
where $(\cdot,\cdot)$ is the standard inner product;
that is, $\preceq$ is representable if and only if every non-zero
vector in the cone $C(\preceq)$ lies in the closed half-space
$H^{+}_{\bf u}=\{{\bf x}\in\R^n\mid ({\bf u},{\bf x})\ge 0\}$ of the corresponding hyperplane
$H_{\bf u}=\{{\bf x}\in\R^n\mid ({\bf u},{\bf x})= 0\}$.
\par

Similarly, for a non-representable but {\em almost} representable
qualitative  probability order $\preceq$, there exists a  vector ${\bf u}\in \R^n$ with non-negative entries
such that
\[
{\bf x}\in C(\preceq )  \Longrightarrow ({\bf u},{\bf x})\ge 0
\quad \mbox{for all}\, \ {\bf x}\in T^n\setminus\{{\bf 0}\}.
\]
In the latter case we can have ${\bf x}\in C(\preceq)$ and $-{\bf x}\notin C(\preceq)$ despite $({\bf u},{\bf x})= 0$.

In both cases, the normalised vector ${\bf u}$ gives us the probability
measure, namely
${\bf p}=(u_1+\ldots +u_n)^{-1}\left(\row un \right)$, from which
$\preceq $ arises or with which it almost agrees.\par


\section{Simplicial complexes and their cancellation conditions}
\label{cancond}
In this section we will introduce the objects of our study, simplicial complexes that arise as initial segments of a qualitative probability order. Using cancellation conditions for simplicial complexes, we will show that this class contains the threshold complexes and is contained in the shifted complexes.  Using only these conditions it will be easy to show that the initial segment complexes are strictly contained in the shifted complexes.  Showing the strict containment of the threshold complexes will require more elaborate constructions which will be developed in the rest of the paper.

A subset $\Delta \subseteq 2^{{[n]}}$ is an {\em (abstract) simplicial complex} if it satisfies the condition:
\[\text{if } B \in \Delta \text{ and } A \subseteq B, \text{ then } A \in \Delta.\]
Subsets that are in $\Delta$ are called {\em faces}.  Abstract simplicial complexes arose from geometric simplicial complexes in topology \citep[e.g.,][]{Ma}. Indeed, for every geometric simplicial complex $\Delta$ the set of vertex sets of simplices in $\Delta$ is an abstract simplicial complex, also called the {\em vertex scheme} of $\Delta$.
In combinatorial optimization various  abstract simplicial complexes  associated with finite graphs (\cite{JJ}) are studied, such as the independence complex, matching complex etc.
Abstract simplicial complexes are also in one-to-one correspondence with {\em simple games} as defined by \cite{vNM:b:theoryofgames}. A simple game is a pair $G=([n],W)$, where $W$ is a subset of the power set $2^{[n]}$ which satisfies the monotonicity condition:
\[
\text{ if $X\in W$ and $X\subseteq Y\subseteq [n]$, then $Y\in W$.}
\]
 The subsets from $W$ are called {\em winning coalitions} and the subsets from $L=2^{[n]}\setminus W$ are called {\em losing coalitions}. Obviously the set of losing coalitions $L$ is a simplicial complex. The reverse is also true: if $\Delta $ is a simplicial complex, then the set $2^{[n]}\setminus \Delta $ is a set of winning coalitions of a certain simple game.

A well-studied class of simplicial complexes is the {\em threshold} complexes (mostly as an equivalent concept to the concept of a weighted majority game but  also as threshold hypergraphs \citep{RRST}). A simplicial complex $\Delta$ is a  threshold complex if there exist non-negative reals $w_1, \ldots, w_n$ and a positive constant $q$, such that
\[
A \in \Delta \Longleftrightarrow w(A) = \sum_{i \in A} w_i < q.
\]
The same parameters define a  {\em weighted majority game} by setting
\[
A \in W \Longleftrightarrow w(A) = \sum_{i \in A} w_i \ge q.
\]
This game has the standard notation $[q;\row wn]$.\par\medskip

A much larger but still well-understood class of simplicial complexes are {\em shifted} simplicial complexes \citep{Klivans05,Klivans07}. A simplicial complex  is shifted if there exists an  order $\trianglelefteq $ on the set of vertices $[n]$ such that for any face $F$, replacing any of its vertices $x\in F$  with a vertex $y$ such that $y \trianglelefteq x$ results in a subset $(F\setminus \{x\})\cup \{y\}$ which is also a face. Shifted complexes correspond to complete\footnote{sometimes also called linear} games \citep{FMDAM}. A complete game has an order $\trianglelefteq$ on players such that if a coalition $W$ is winning, then replacing any player $x\in W$ with a player $x \trianglelefteq z$ results in a coalition $(W\setminus \{x\})\cup \{z\}$ which is also winning.

A related concept is the so-called Isbel's desirability relation $\le_I$ \cite{tz:b:simplegames}. Given a game $G$ the relation $\le_I $ on $[n]$ is defined by setting $j \le_I i$ if for every set $X\subseteq [n]$ not containing $i$ and~$j$
\begin{equation}
\label{isbel}
X\cup \{j\}\in W \Longrightarrow X\cup \{i\} \in W.
\end{equation}
The idea is that if $j \le_I i$, then $i$ is more desirable as a coalition partner than $j$. The  game is  complete if and only if $\le_I$ is an  order on $[n]$. 
\par\medskip

Let $\preceq $ be a qualitative probability order on $[n]$ and $T\in 2^{[n]}$. We denote
\[
\Delta(\preceq, T)=\{X\subseteq [n]\mid X\prec T\},
\]
where $X\prec Y$ stands for $X\preceq Y$ but not $Y\preceq X$, and call it an {\em initial segment} of $\preceq $.

\begin{lem} Any initial segment of a qualitative probability order is a simplicial complex.
\end{lem}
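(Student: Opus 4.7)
To prove that $\Delta(\preceq, T)$ is a simplicial complex, I need to show that if $B \in \Delta(\preceq, T)$ and $A \subseteq B$, then $A \in \Delta(\preceq, T)$; in other words, $B \prec T$ and $A \subseteq B$ together imply $A \prec T$.

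The plan is to break this into two short steps. First, establish the monotonicity statement: whenever $A \subseteq B$, one has $A \preceq B$. Writing $C = B \setminus A$, the axiom (\ref{nontriv}) gives $\emptyset \preceq C$, and since $(\emptyset \cup C) \cap A = \emptyset$, de Finetti's axiom (\ref{deFeq}) (applied with the roles playing $\emptyset \preceq C$ and adding the disjoint set $A$ to both sides) yields $A = \emptyset \cup A \preceq C \cup A = B$. Equivalently, one may invoke Lemma \ref{strict-equiv} directly with the decomposition $A \preceq A$ and $\emptyset \preceq B \setminus A$, whose unions satisfy the disjointness hypothesis.

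Second, combine this with the hypothesis $B \prec T$ using transitivity of $\preceq$. From $A \preceq B$ and $B \preceq T$ we immediately get $A \preceq T$. For strictness, suppose to the contrary that $T \preceq A$; then transitivity with $A \preceq B$ gives $T \preceq B$, contradicting $B \prec T$. Hence $A \prec T$, so $A \in \Delta(\preceq, T)$, as required.

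I do not anticipate any obstacle: the argument is an immediate consequence of the two defining axioms of a qualitative probability order together with transitivity of $\preceq$. The only minor point of care is that the conclusion involves the strict relation $\prec$ rather than $\preceq$, which is handled by the contradiction argument above.
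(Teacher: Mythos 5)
Your proof is correct and follows essentially the same route as the paper: write $C = B\setminus A$, use axiom~(\ref{nontriv}) to get $\emptyset \preceq C$, and apply de Finetti's axiom~(\ref{deFeq}) to conclude $A \preceq B$. The only difference is cosmetic — the paper then simply says "since $\Delta$ is an initial segment, $A \preceq B$ and $B \in \Delta$ imply $A \in \Delta$," whereas you explicitly spell out the transitivity step $A \preceq B \prec T \Rightarrow A \prec T$ (including the contradiction argument for strictness), which is a harmless and slightly more careful elaboration of the same idea.
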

\begin{proof}Suppose that $\Delta=\Delta(\preceq,T)$ and $B \in \Delta$.  If $A \subset B$, then let $C=B \setminus A$. By (\ref{nontriv}) we have that $\emptyset \preceq C$ and since $A \cap C= \emptyset$ it follows from (\ref{deFeq}) that $\emptyset \cup A \preceq C \cup A$ which implies that $ A \preceq B$. Since $\Delta$ is an initial segment,  $B \in \Delta$ and $A \preceq B$ implies that $A \in \Delta$ and thus $\Delta$ is a simplicial complex.
\end{proof}

We will refer to simplicial complexes that arise as initial segments of some qualitative probability order as an {\em initial segment complex}.
\par\medskip

In a similar manner as  for the qualitative probability orders, cancellation conditions will play a key role in our analyzing simplicial complexes.

\begin{defn}
A simplicial complex $\Delta$ is said to satisfy $CC_k^{*}$ if for no $k\ge 2$ there exists a trading transform  $(A_1, \ldots, A_k;B_1, \ldots, B_k)$, such that $A_i \in \Delta$ and $B_i \notin \Delta$, for every $i \in [k]$.
\end{defn}

Let us show the connection between $CC_k$ and $CC_k^{*}$.
\begin{thm}
Suppose $\preceq$ is a  qualitative probability order  on $2^{[n]}$ and ${\Delta(\preceq, T)}$ is its initial segment. If $\preceq$ satisfies $CC_k$ then ${\Delta(\preceq, T)}$ satisfies $CC_k^{*}$.
\end{thm}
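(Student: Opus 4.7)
The plan is to prove the contrapositive: assume $\Delta = \Delta(\preceq, T)$ fails $CC_k^{*}$ and deduce that $\preceq$ fails $CC_k$. The argument should be essentially a matter of unpacking definitions, with no real calculation involved; the only substantive observation is that membership in $\Delta$ translates directly into the strict inequalities required by $CC_k$.

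Concretely, suppose $(A_1,\ldots,A_k;B_1,\ldots,B_k)$ is a trading transform witnessing the failure of $CC_k^{*}$, so that $A_i \in \Delta$ and $B_i \notin \Delta$ for every $i \in [k]$. By definition of the initial segment, $A_i \in \Delta$ means $A_i \prec T$. On the other side, since $\preceq$ is a (complete) order, $B_i \notin \Delta$ means exactly that $B_i \not\prec T$, i.e. $T \preceq B_i$. Transitivity then yields $A_i \prec T \preceq B_i$, hence $A_i \prec B_i$ for every $i$.

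Therefore the very same trading transform $(A_1,\ldots,A_k;B_1,\ldots,B_k)$ now satisfies $A_i \preceq B_i$ for all $i$, with strict inequality for (indeed every, and in particular) at least one index. This is precisely a violation of $CC_k$ for $\preceq$, completing the contrapositive.

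Since the argument is a one-liner once the two definitions are unwound, there is no serious obstacle. The only point worth flagging is that one must invoke completeness of $\preceq$ to move from $B_i \notin \Delta$ (i.e., $\neg(B_i \prec T)$) to $T \preceq B_i$; without this, the desired comparability could fail. Nothing more delicate — such as de Finetti's axiom, Lemma~\ref{strict-equiv}, or any property of the trading transform beyond its existence — is required.
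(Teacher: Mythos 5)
Your argument is correct, and it is exactly the routine unpacking of the definitions that the authors evidently regarded as too obvious to write out, since the paper states this theorem without proof. You rightly flag the one non-trivial step — using completeness of $\preceq$ to pass from $B_i \notin \Delta$, i.e. $\neg(B_i \prec T)$, to $T \preceq B_i$ — after which transitivity gives $A_i \prec B_i$ for every $i$ and the same trading transform witnesses the failure of $CC_k$.
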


This gives us some initial properties of initial segment complexes. Since conditions $CC_k$, $k=2,3$, hold for all qualitative probability orders \citep{PF1} we obtain

\begin{thm}\label {cc*>3}
If an abstract simplicial complex $\Delta \subseteq 2^{[n]}$ is an initial segment complex, then it satisfies $CC_k^{*}$ for all $k \le 3$.
\end{thm}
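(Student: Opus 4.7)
The plan is to prove this as a short corollary of the immediately preceding theorem together with a known result about qualitative probability orders. If $\Delta$ is an initial segment complex, then by definition there exists a qualitative probability order $\preceq$ on $[n]$ and a subset $T \in 2^{[n]}$ with $\Delta = \Delta(\preceq, T)$. The preceding theorem says: if $\preceq$ satisfies $CC_k$, then $\Delta(\preceq, T)$ satisfies $CC_k^*$. So it suffices to show that $\preceq$ satisfies $CC_k$ for $k \le 3$.

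For this I would cite the fact, noted earlier in the text just after Theorem \ref{KPStheorem} and originally from \citet[Section 2]{PF1}, that $CC_2$ and $CC_3$ hold for every qualitative probability order --- these follow from de Finetti's axiom and completeness/transitivity, essentially by the same cancellation-of-common-elements argument used in Lemma \ref{strict-equiv}. Combining these two ingredients gives $CC_k^*$ for $\Delta$ when $k = 2, 3$. The case $k = 1$ is either vacuous (the definition is only stated for $k \ge 2$) or trivial (a trading transform of length $1$ forces $A_1 = B_1$, so we cannot have $A_1 \in \Delta$ and $B_1 \notin \Delta$).

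I do not expect any real obstacle here: the content of the theorem is essentially a repackaging, and all the real work lives in the previous theorem (the transfer of cancellation conditions from $\preceq$ to its initial segment) and in the classical fact that $CC_2$ and $CC_3$ hold automatically. The only thing worth being careful about is to make sure the trading transform witnessing a failure of $CC_k^*$ on $\Delta$ can be lifted to a trading transform witnessing a failure of $CC_k$ on $\preceq$ --- but this is exactly what the preceding theorem does, since $A_i \in \Delta$ and $B_i \notin \Delta$ translate into $A_i \prec T \preceq B_i$, giving $A_i \prec B_i$ for each $i$ and hence a strict witness against $CC_k$.
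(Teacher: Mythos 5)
Your argument is exactly the paper's: combine the preceding transfer theorem (if $\preceq$ satisfies $CC_k$ then $\Delta(\preceq,T)$ satisfies $CC_k^*$) with the cited fact from \citet{PF1} that $CC_2$ and $CC_3$ hold for every qualitative probability order, and note the $k=1$ case is trivial. Correct, and the same route as the paper.
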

From this theorem we get the following corollary, due to Caroline Klivans (personal communication):
\begin{cor} Every initial segment complex is a shifted complex.  Moreover, there are shifted complexes that are not initial segment complexes.
\end{cor}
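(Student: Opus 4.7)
The two halves of the corollary have very different difficulties.

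\emph{Every initial segment complex is shifted.} Given $\Delta = \Delta(\preceq, T)$, equip $[n]$ with the quasi-order $j \trianglelefteq i \Longleftrightarrow \{j\} \preceq \{i\}$; this is an order in the sense used in the paper since $\preceq$ is complete and transitive. Suppose $F \in \Delta$, $i \in F$, $j \notin F$, and $j \trianglelefteq i$. Set $C = F \setminus \{i\}$, which is disjoint from $\{i\} \cup \{j\}$. Applying de Finetti's axiom (\ref{deFeq}) to $\{j\} \preceq \{i\}$ with this $C$ gives $\{j\} \cup C \preceq \{i\} \cup C = F$. Composing with $F \prec T$ and using transitivity of $\preceq$ yields $(F \setminus \{i\}) \cup \{j\} \prec T$, so this set is again a face. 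Hence $\trianglelefteq$ witnesses that $\Delta$ is shifted, and this first part is essentially one application of de Finetti's axiom.

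\emph{There exist shifted complexes that are not initial segment complexes.} By Theorem~\ref{cc*>3}, it suffices to exhibit a shifted $\Delta$ that violates $CC_k^*$ for some $k \geq 4$. The strategy has two steps: (i) find a trading transform $(A_1,\ldots,A_k;\, B_1,\ldots,B_k)$, equivalently a balanced multiset equation $\sum_i \chi(A_i)=\sum_i \chi(B_i)$, of short length (hoping for $k=4$); and (ii) build $\Delta$ as the shifted closure of $\{A_1,\ldots,A_k\}$ with respect to a carefully chosen total order on the vertices, and verify that none of the $B_i$ lies in this closure. If (ii) succeeds, then $\Delta$ is shifted, contains every $A_i$, excludes every $B_i$, and hence by Theorem~\ref{cc*>3} is not an initial segment complex.

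The main obstacle is step (ii): the shifted closure can easily sweep in some $B_j$ via a sequence of Isbell-dominance replacements starting from one of the $A_i$. The trade and the vertex order must be jointly chosen so that for every $i$ and every iterated downward replacement $A'$ of $A_i$ we have $A' \neq B_j$ for all $j$. The introduction asserts that such an example exists on $7$ vertices, so a (likely computer-assisted) search among length-$4$ trades with this avoidance property is the natural route to produce the required complex; note that $7$ is plausible as a minimal size because $n=6$ is the smallest number of atoms on which non-representable qualitative probability orders first appear.
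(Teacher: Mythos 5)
Your proof of the first half is correct, and it takes a genuinely different route from the paper's. The paper argues contrapositively: it invokes Klivans' characterization of non-shifted complexes as those containing a specific two-face obstruction, observes that this obstruction is a length-$2$ trading transform, and concludes from $CC_2^*$. You instead prove shiftedness directly by defining $j\trianglelefteq i \Leftrightarrow \{j\}\preceq\{i\}$ and applying de Finetti's axiom once, with transitivity of $\preceq$ finishing the argument. Your version is more self-contained (no external characterization needed), and since $\preceq$ restricted to singletons is a complete transitive relation, $\trianglelefteq$ is an order in the paper's sense; ties can be broken arbitrarily if a linear order is wanted, as your argument covers both directions of a tie. (You should also say a word about the trivial case $j\in F$, where $(F\setminus\{i\})\cup\{j\}=F\setminus\{i\}$ is a face because $\Delta$ is a complex, but that is a cosmetic omission.)

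The second half, however, contains a genuine error in the reduction step. You write that, by Theorem~\ref{cc*>3}, it suffices to exhibit a shifted $\Delta$ violating $CC_k^*$ for some $k\geq 4$. This is backwards. Theorem~\ref{cc*>3} says an initial segment complex satisfies $CC_k^*$ for all $k\leq 3$; hence to rule out being an initial segment via that theorem you must violate $CC_2^*$ or $CC_3^*$. Violating $CC_k^*$ for $k\geq 4$ tells you nothing here --- in fact the entire point of Sections 5 and 6 is to construct an \emph{initial segment} complex that \emph{does} violate $CC_4^*$, so such a violation is perfectly compatible with being an initial segment. (It does show non-thresholdness by Theorem~\ref{all_cc*}, but that is a different conclusion.) Your proposed search for length-$4$ trades is therefore aimed at the wrong target, and the appeal to the $26$-atom machinery is much heavier than needed. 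The correct and much cheaper move is exactly what the paper does: take a length-$2$ trade. Concretely, let $\Delta$ be the shifted closure (in the usual order on $[7]$) of $\{1,5,7\}$ and $\{2,3,4,6\}$; one checks directly that $\{3,4,7\}$ and $\{1,2,5,6\}$ are not in $\Delta$, yet $(\{1,5,7\},\{2,3,4,6\};\{3,4,7\},\{1,2,5,6\})$ is a trading transform, violating $CC_2^*$. Separately, your heuristic remark that $n=6$ is the smallest number of atoms admitting a non-representable order is also off: the paper notes non-representable orders first appear at $n=5$ (with $n=6$ being where almost-representability first fails).
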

\begin{proof} Let $\Delta$ be a non-shifted simplicial complex. then it is known to contain an obstruction of the form: there are $i,j \in [n]$, and $A,B \in \Delta$, neither containing $i$ or $j$, so that $A \cup i$ and $B \cup j$ are in $\Delta$ but neither $i \cup B$ nor $j \cup A$ are in $\Delta$ \citep{Klivans05}. But then $(A \cup i,B\cup j;B\cup i,A\cup j)$ is a trading transform that violates $CC_2^{*}$. Since all initial segments satisfy $CC_2^{*}$ they must all be shifted.

On the other hand, there are shifted complexes that fail to satisfy $CC_2^{*}$ and hence can not be initial segments. Let $\Delta$ be the smallest shifted complex (where shiftingis with respect to the usual ordering) that contains $\{1,5,7\}$ and $\{2,3,4,6\}$ Then it is easy to check that neither $\{3,4,7\}$ nor $\{1,2,5,6\}$ are in $\Delta$ but \begin{equation}
(\{1,5,7\},\{2,3,4,6\};\{3,4,7\},\{1,2,5,6\})
\end{equation}
is a transform in violation of $CC_2^{*}$.
\end{proof}

 Similarly, the {\em terminal segment}
\[
G(\preceq, T)=\{X\subseteq [n]\mid T\preceq X\}
\]
of any qualitative probability order is a complete simple game.

The Theorem~2.4.2 of the book \cite{tz:b:simplegames} can be reformulated to give necessary and sufficient conditions for the simplicial complex to be a threshold.

\begin{thm}
\label{all_cc*}
An abstract simplicial complex $\Delta \subseteq 2^{[n]}$ is a threshold complex if and only if the condition $CC_k^{*}$ holds for all $k \ge 2$.
\end{thm}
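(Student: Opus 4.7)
The plan is to mirror the classical Kraft--Pratt--Seidenberg proof of Theorem~\ref{KPStheorem}, with faces and non-faces of $\Delta$ playing the role of pairs $B\preceq A$ versus $B\not\preceq A$. The forward direction is immediate weighted summation: given a threshold representation with non-negative weights $w_i$ and cutoff $q$, a hypothetical trading transform $(A_1,\dots,A_k;B_1,\dots,B_k)$ with $A_i\in\Delta$ and $B_i\notin\Delta$ would satisfy $\sum_i w(A_i)=\sum_i w(B_i)$ by the balance of multiplicities, whereas the left side is strictly less than $kq$ and the right side at least $kq$, a contradiction.

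For the converse, assume $CC_k^*$ for all $k\ge 2$ and consider the finite set of integer vectors
\[
V=\{\chi(B)-\chi(A)\mid A\in\Delta,\ B\notin\Delta\}\subset\{-1,0,1\}^n.
\]
A threshold representation exists if and only if there is $w\in\mathbb{R}^n_{\ge 0}$ with $\langle w,v\rangle>0$ for every $v\in V$, since then any $q$ lying strictly between $\max_{A\in\Delta}w(A)$ and $\min_{B\notin\Delta}w(B)$ serves as a cutoff. Suppose no such $w$ exists. Applying Farkas' lemma to the rational system ``$V^{\!\top}w\ge\mathbf{1}$, $w\ge 0$'' and clearing denominators yields non-negative integers $\lambda_{A,B}$, not all zero, satisfying
\[
\sum_{A\in\Delta,\ B\notin\Delta}\lambda_{A,B}\bigl(\chi(B)-\chi(A)\bigr)\le\mathbf{0}\quad\text{coordinatewise.}
\]

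Let $N=\sum\lambda_{A,B}$ and form multisets $\mathcal A=(A_1,\dots,A_N)$ and $\mathcal B=(B_1,\dots,B_N)$ by listing each $A$ and each $B$ with multiplicity $\lambda_{A,B}$. The Farkas certificate says that for every coordinate $i$ the deficit $\delta_i:=\#\{j:i\in A_j\}-\#\{j:i\in B_j\}$ is non-negative, while the number of $B_j$'s not containing $i$ equals $N-\#\{j:i\in B_j\}\ge\delta_i$. One coordinate at a time, we pick $\delta_i$ of the $B_j$'s that currently miss $i$ and insert $i$ into them; the resulting supersets $B_j'\supseteq B_j$ are still non-faces by the downward closure of $\Delta$, while the multiset $(B_1',\dots,B_N')$ now has element counts matching $\mathcal A$. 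This produces a trading transform of length $N$ whose first half lies in $\Delta$ and second half lies outside $\Delta$, violating $CC_N^*$. The corner case $N=1$ cannot arise, since $\chi(B)\le\chi(A)$ would give $B\subseteq A\in\Delta$ and hence $B\in\Delta$. The main delicate point is this enlargement step, where the simplicial-complex axiom is used precisely to promote a coordinatewise LP certificate into an honest trading transform of faces against non-faces.
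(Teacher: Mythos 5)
The paper does not supply a proof of this theorem at all---it simply cites Theorem~2.4.2 of Taylor and Zwicker's book on simple games (the characterization of weighted games by trade-robustness) and remarks that it can be reformulated in the complex/dual language. Your argument is a correct, self-contained reconstruction of that classical proof, using the same LP-duality machinery that underlies both the Taylor--Zwicker theorem and the Kraft--Pratt--Seidenberg theorem which the paper states as Theorem~\ref{KPStheorem}. The one genuinely delicate step, which you flag correctly, is turning the coordinatewise inequality $\sum \lambda_{A,B}\bigl(\chi(B)-\chi(A)\bigr)\le \mathbf 0$ into an exact trading transform: you absorb the slack $\delta_i$ by inserting $i$ into $\delta_i$ of the non-faces, and the monotonicity of non-faces (downward closure of $\Delta$) guarantees the inflated sets are still non-faces; this is precisely where the simple-game/simplicial-complex structure is used, and the counting $N-\#\{j:i\in B_j\}\ge\delta_i$ is correct. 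Two small remarks in keeping with the sources: the integer Farkas certificate may have $\lambda_{A,B}\ge 2$, producing repeated sets in the trading transform, but the paper's Definition~\ref{cancel} explicitly allows repetitions, so this is fine; and the degenerate complex $\Delta=\emptyset$ (which the paper's definition of simplicial complex does not exclude) satisfies every $CC_k^*$ vacuously while admitting no threshold representation with $q>0$, so strictly speaking the equivalence requires $\emptyset\in\Delta$---a convention that both the paper and Taylor--Zwicker adopt silently, so its omission from your proof is harmless.
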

  Above we showed that the initial segment complexes are strictly contained in the shifted complexes.  What is the relationship between the initial segment complexes and threshold complexes?

\begin{lem} Every threshold complex is an initial segment complex.
\end{lem}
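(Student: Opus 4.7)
The plan is to extract a qualitative probability order and a threshold subset directly from the weight data of the threshold complex. Suppose $\Delta$ is defined by non-negative weights $w_1,\ldots,w_n$ and positive threshold $q$, and write $w(A)=\sum_{i\in A}w_i$. I would set
\[
A\preceq B \iff w(A)\le w(B),
\]
and then pick a suitable $T\in 2^{[n]}$ so that $\Delta(\preceq,T)=\Delta$.

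First I would verify that $\preceq$ is a qualitative probability order. Reflexivity, completeness, and transitivity are inherited from $\le$ on $\R$. The condition $\emptyset\preceq A$ follows from $w(\emptyset)=0\le w(A)$, since the $w_i$ are non-negative. For de Finetti's axiom, if $(A\cup B)\cap C=\emptyset$, additivity of weight on disjoint unions gives $w(A\cup C)=w(A)+w(C)$ and $w(B\cup C)=w(B)+w(C)$, so $w(A)\le w(B)$ if and only if $w(A\cup C)\le w(B\cup C)$; translating through the definition of $\preceq$ yields $A\preceq B \iff A\cup C\preceq B\cup C$.

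For the threshold subset I would choose $T$ to be a non-face of minimal weight, which exists whenever $\Delta\ne 2^{[n]}$. By minimality of $w(T)$ among non-faces, any $X$ with $w(X)<w(T)$ is automatically a face. Conversely, every face $X\in\Delta$ satisfies $w(X)<q\le w(T)$, where the second inequality holds because $T\notin\Delta$ forces $w(T)\ge q$. Combining the two inclusions gives
\[
\Delta(\preceq,T)=\{X\mid X\prec T\}=\{X\mid w(X)<w(T)\}=\Delta,
\]
so $\Delta$ is an initial segment complex.

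I do not expect a real obstacle here; the one mildly delicate point is that the threshold value $q$ itself need not be the weight of any subset, which is precisely why one should choose $T$ as a minimum-weight non-face rather than insisting on $w(T)=q$. The degenerate case $\Delta=2^{[n]}$ (arising when every $w_i=0$) is the only one that falls outside this argument and would have to be either excluded by convention or treated separately.
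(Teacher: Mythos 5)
Your proof is correct and follows the same basic idea as the paper's: use the given weights to define the representable order $\preceq$ and then find a threshold set $T$ so that $\Delta(\preceq,T)=\Delta$. The one substantive difference is in the choice of $T$. The paper takes $T$ to be a maximum-weight \emph{face} (a set with $w(A)\le w(T)<q$ for all $A\in\Delta$), whereas you take $T$ to be a minimum-weight \emph{non-face}. Given that the paper's definition of an initial segment uses the strict relation, $\Delta(\preceq,T)=\{X\mid X\prec T\}$, your choice is actually the more careful one: the paper's $T$ satisfies $w(T)<q$, so $T\in\Delta$ but $T\notin\Delta(\preceq,T)$, meaning the resulting initial segment would drop $T$ and any other face of the same maximal weight. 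Your version avoids this by having $w(T)\ge q$, so the strict cut lands exactly at the face/non-face boundary. You also flag the degenerate case $\Delta=2^{[n]}$, which genuinely cannot be a strict initial segment of anything and is a boundary case the paper glosses over; noting that it must be excluded by convention is the right call.
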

\begin{proof} The threshold complex defined by the weights $w_1, \ldots, w_n$ and a positive constant $q$ is the initial segment of the representable qualitative probability order whose where $p_i=w_i,\  1 \leq i\leq n$ and where the threshold set $T$ has the property that $w(A) \leq w(T) <q$ for all $A \in \Delta$.
\end{proof}

This leaves us with the question of whether this containment is
strict, i.e., are there initial segment complexes which are not
threshold complexes.  One might think that some  initial segment
of a non-representable qualitative probability order is not
threshold. Unfortunately that may not be the case.

\begin{exmp} This example, adapted from \citep{Mac}[Example2.5, Example 3.9] gives a
non-representable qualitative probability order for which every
initial segment complex is threshold. Construct a representable
qualitative probability order on $2^{[5]}$ using the $p_i's$
$\{7,10,16,20,22\}$. The order begins
\begin{equation}
\emptyset \prec 1 \prec 2 \prec 3 \prec 12 \prec 4 \prec 5\prec \cdots
\end{equation}
 where $1$ denotes the singleton set $\{1\}$ and by $12$ we mean $\{1,2\}$. Since the qualitative probability order is representable, every initial segment is a threshold complex. Now suppose we interchange the order of $12$ and $4$. The new ordering, which begins
\begin{equation}
 \emptyset \prec 1 \prec 2 \prec 3 \prec 4 \prec 12 \prec 5 \prec \cdots ,
\end{equation}
is still a qualitative probability order but it is no longer representable \citep[Example 2.5]{Mac}. With one exception, all of the initial segments in this new non-representable qualitative order are initial segments in the original one and thus are threshold. The one exception is the segment
\begin{equation}
\emptyset \prec 1 \prec 2 \prec 3 \prec 4
\end{equation}
which is obviously a threshold complex.
\end{exmp}

  Another approach to finding an initial segment complex that is not threshold is to construct a
 complex that violates $CC_k^{*}$ for some small value of
$k$.  As noted above, all initial segment complexes satisfy
$CC_2^{*}$ and $CC_3^{*}$ so the smallest condition that could
fail is $CC_4^{*}$.
We will now show that for small values of $n$ cancellation condition $CC^{*}_4$ 
is satisfied for any initial segment. This will also give us invaluable information on how to construct a non-threshold initial segment later.

\begin{defn}
Two pairs of subsets $(A_1,B_1)$ and $(A_2,B_2)$ are said to be compatible if the following two conditions hold:
\begin{align*}
& x\in A_1\cap A_2 \Longrightarrow x\in B_1\cup B_2,\ \text{and}\\
& x\in B_1\cap B_2 \Longrightarrow x\in A_1\cup A_2.
\end{align*}
\end{defn}

\begin{lem}
\label{oneless}
Let $\preceq$ be a qualitative probability order on $2^{[n]}$, $T\subseteq [n]$, and let $\Delta=\Delta_n(\preceq, T)$ be the respective initial segment.
Suppose $(\row As,\row Bs)$ is a trading transform and $A_i\prec T\preceq B_j$ for all $i,j\in [s]$. If any two pairs $(A_i,B_k)$ and $(A_j,B_l)$ are  compatible, then $\preceq$ fails to satisfy $CC_{s-1}$.
\end{lem}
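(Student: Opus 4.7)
The plan is to merge the two compatible pairs into one, shrinking the trading transform from length $s$ to length $s-1$ while preserving all the inequalities, and then to invoke the discrete cone axiom D3 to verify that the merged pair still satisfies $A_{12}\preceq B_{12}$. After relabeling the indices in $(A_1,\ldots,A_s;B_1,\ldots,B_s)$ (which is legitimate because the trading transform is only a multiset identity and $A_i\prec T\preceq B_j$ holds for every $i,j$, so the pairing is arbitrary), I may assume the two compatible pairs are $(A_1,B_1)$ and $(A_2,B_2)$.

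Next I consider the integer vector $\mathbf{v}=\chi(B_1)+\chi(B_2)-\chi(A_1)-\chi(A_2)\in\mathbb{Z}^n$. The main technical step is to show that $\mathbf{v}\in T^n$, and this is precisely what compatibility delivers: an entry $\mathbf{v}_x$ could only leave $\{-1,0,1\}$ if some $x$ belonged to $A_1\cap A_2$ but to neither $B_1$ nor $B_2$ (giving $\mathbf{v}_x=-2$), or to $B_1\cap B_2$ but to neither $A_1$ nor $A_2$ (giving $\mathbf{v}_x=2$); both possibilities are ruled out by the compatibility axioms. I then split $\mathbf{v}$ into its positive and negative supports, setting
\[
B_{12}=\{x:\mathbf{v}_x=1\}, \qquad A_{12}=\{x:\mathbf{v}_x=-1\},
\]
so that $\chi(B_{12})-\chi(A_{12})=\mathbf{v}$ and $A_{12}\cap B_{12}=\emptyset$.

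I then verify that $(A_{12},A_3,\ldots,A_s;B_{12},B_3,\ldots,B_s)$ is a trading transform of length $s-1$: for each $x$ the contribution of the merged pair to the multiset count equals $a_1(x)+a_2(x)$ on the left and $b_1(x)+b_2(x)$ on the right by construction, so the counts agree with the original transform. To see that the merged inequality $A_{12}\preceq B_{12}$ holds, note that $\chi(B_1)-\chi(A_1)$ and $\chi(B_2)-\chi(A_2)$ lie in the discrete cone $C(\preceq)$ because $A_1\preceq B_1$ and $A_2\preceq B_2$; their sum is $\mathbf{v}\in T^n$, so axiom D3 puts $\mathbf{v}$ back in $C(\preceq)$, which is exactly $A_{12}\preceq B_{12}$.

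Finally, the hypothesis $A_i\prec T\preceq B_i$ for $i\ge 3$ gives $A_i\prec B_i$ strictly, so the length-$(s-1)$ trading transform just produced has at least one strict inequality and hence violates the cancellation condition $CC_{s-1}$. The one non-routine step, and the place where the geometric viewpoint really earns its keep, is the verification that $\mathbf{v}\in T^n$; everything else is bookkeeping on characteristic vectors together with a single application of D3.
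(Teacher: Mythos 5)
Your proof is correct, and it takes a genuinely different (though ultimately equivalent) route from the paper's. The paper works purely in set language: it forms $\bar{A}_i = A_i \setminus (A_i\cap B_k)$, $\bar{B}_k = B_k \setminus (A_i\cap B_k)$ and the analogous $\bar{A}_j,\bar{B}_l$, uses compatibility to prove $\bar{A}_i\cap\bar{A}_j=\bar{B}_k\cap\bar{B}_l=\emptyset$, and then invokes Lemma~\ref{strict-equiv} to get the \emph{strict} relation $\bar{A}_i\cup\bar{A}_j \prec \bar{B}_k\cup\bar{B}_l$ before fusing the transform down to length $s-1$. You instead pass to characteristic vectors, observe that compatibility is exactly the statement that $\mathbf{v}=\chi(B_1)+\chi(B_2)-\chi(A_1)-\chi(A_2)$ stays inside $T^n$, and then let axiom D3 of the discrete cone do the work of the union lemma. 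The two yield different merged sets ($\bar{A}_i\cup\bar{A}_j$ and $\bar{B}_k\cup\bar{B}_l$ may overlap, whereas your $A_{12},B_{12}$ are disjoint by construction), but both give valid trading transforms. Two small points to tighten. First, D3 only gives you $A_{12}\preceq B_{12}$, not a strict inequality, so you must rely on the $s-2$ remaining pairs to supply the strict $\prec$ that the definition of $CC_{s-1}$ requires; this implicitly assumes $s\ge 3$, whereas the paper's argument via Lemma~\ref{strict-equiv} gets strictness on the merged pair itself (and in fact shows the $s=2$ hypothesis is self-contradictory). Since the lemma is only invoked with $s=4$, this is harmless, but worth being explicit about. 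Second, your sentence about the merged pair contributing $a_1(x)+a_2(x)$ and $b_1(x)+b_2(x)$ to the multiset counts is not literally true (those quantities can be $2$, while $[x\in A_{12}]$ is at most $1$); what is true and what you actually need is that the \emph{difference} $[x\in A_{12}]-[x\in B_{12}]$ equals $(a_1+a_2-b_1-b_2)(x)=-\mathbf{v}_x$, which is immediate from the definition of $A_{12},B_{12}$.
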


\begin{proof}
Let us define
\begin{align}
&\bar{A}_i=A_i\setminus (A_i\cap B_k), \qquad\qquad \bar{B}_k=B_k\setminus (A_i\cap B_k),\\
&\bar{A}_j=A_j\setminus (A_j\cap B_l), \qquad\qquad \bar{B}_l=B_l\setminus (A_j\cap B_l).
\end{align}
We note that
\begin{equation}
\label{emptyintersection}
\bar{A}_i\cap \bar{A}_j=\bar{B}_k\cap \bar{B}_l=\emptyset.
\end{equation}
Indeed, suppose, for example, $x\in \bar{A}_i\cap \bar{A}_j$, then also $x\in A_i\cap A_j$ and by the compatibility $x\in B_k$ or $x\in B_l$. In both cases it is  impossible for $x$ to be in $x\in \bar{A}_i\cap \bar{A}_j$. We note also that by Lemma~\ref{strict-equiv} we have
\begin{equation}
\label{pcompjoined}
\bar{A}_i\cup \bar{A}_j\prec \bar{B}_k\cup \bar{B}_l.
\end{equation}
Now we observe that
\begin{equation*}
(\bar{A}_i,\bar{A}_j, A_{m_1},\ldots, A_{m_{s-2}};\bar{B}_k,\bar{B}_l, B_{r_1},\ldots, B_{r_{s-2}}).
\end{equation*}
is a trading transform. Hence, due to (\ref{emptyintersection}),
\begin{equation*}
(\bar{A}_i\cup \bar{A}_j, A_{m_1},\ldots, A_{m_{s-2}};\bar{B}_k\cup \bar{B}_l, B_{r_1},\ldots, B_{r_{s-2}})
\end{equation*}
is also a trading transform. This violates $CC_{s-1}$ since (\ref{pcompjoined}) holds and $A_{m_t}\prec B_{r_t}$ for all $t=1,\ldots, s-2$.
\end{proof}



By definition of a trading transform we are allowed to use repetitions of the same coalition in it. However we will show that to violate  $CC^{*}_{4}$ we need a trading transform $(A_1, \ldots, A_4; B_1, \ldots, B_4)$ where all $A$'s and $B$'s are different.

\begin{lem}\label{norepet}
Let $\preceq$ be a qualitative probability order on $2^{[n]}$, $T\subseteq [n]$, and let $\Delta=\Delta_n(\preceq, T)$ be the respective initial segment.
Suppose $(\row A4,\row B4)$ is a trading transform and $A_i\prec T\preceq B_j$ for all $i,j\in [4]$. Then
\[|\{\row A4\}|=|\{\row B4\}| =4.\]
\end{lem}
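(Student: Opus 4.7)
My plan is to argue by contradiction using Lemma~\ref{oneless} together with the fact, stated just after Theorem~\ref{KPStheorem}, that every qualitative probability order satisfies the third cancellation condition $CC_3$. Lemma~\ref{oneless} says that a compatible pair of pairs inside a trading transform of length $s$ forces the failure of $CC_{s-1}$; so to derive the contradiction, it will suffice to exhibit such a compatible pair whenever two $A_i$'s or two $B_j$'s coincide.

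Assume for contradiction that $A_r=A_s$ with $r\neq s$. I choose the ``external'' index $t\in\{1,2,3,4\}\setminus\{r,s\}$ and any two distinct $k,l\in\{1,2,3,4\}$, and I will check that the pairs $(A_r,B_k)$ and $(A_t,B_l)$ are compatible. The key observation is that the repetition $A_r=A_s$ inflates multiplicities on the $A$-side of the trading transform: any $x\in A_r\cap A_t$ belongs to $A_r$, $A_s$ and $A_t$, so it lies in at least three of $A_1,\ldots,A_4$, and by Definition~\ref{cancel} it therefore lies in at least three of $B_1,\ldots,B_4$, hence in at least one of $B_k,B_l$. This gives the first compatibility condition $A_r\cap A_t\subseteq B_k\cup B_l$. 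For the second, $B_k\cap B_l\subseteq A_r\cup A_t$, take $x\in B_k\cap B_l$: it lies in at least two of the $B_j$'s and so in at least two of the $A_i$'s; if $x\notin A_r=A_s$, both containments must come from the other two indices (namely $t$ and its companion), forcing $x\in A_t$.

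Applying Lemma~\ref{oneless} with $s=4$ to this compatible pair produces a length-$3$ trading transform whose $A$-sets still all satisfy $\prec T$, whose $B$-sets still all satisfy $\succeq T$, and with at least one strict inequality, which contradicts $CC_3$. The case $B_r=B_s$ is handled symmetrically by interchanging the roles of the two sides of the trading transform: choosing $t\notin\{r,s\}$ and any distinct $k,l$, one verifies compatibility of $(A_k,B_r)$ and $(A_l,B_t)$ by the same multiplicity count with the roles of the $A$- and $B$-sides swapped. Hence $|\{A_1,\ldots,A_4\}|=|\{B_1,\ldots,B_4\}|=4$. I do not anticipate a genuine obstacle; the whole argument reduces to the simple counting principle that a repetition on one side of the trading transform forces coverage by any pair of indices on the other side, and this falls out of Definition~\ref{cancel} as soon as one brings in the external index $t$ disjoint from the repeated pair $\{r,s\}$.
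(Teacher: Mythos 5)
Your proof is correct and takes essentially the same approach as the paper's: both rely on Lemma~\ref{oneless}, the fact that qualitative probability orders satisfy $CC_3$, and the multiplicity-balance constraint built into the definition of a trading transform. The only difference is organizational --- the paper first uses $CC_3$ to rule out compatible pairs and then extracts from the assumed repetition an element whose multiplicities on the two sides of the transform disagree, whereas you use the multiplicity balance to construct a compatible pair directly from the repetition and then contradict $CC_3$ via Lemma~\ref{oneless}; it is the same argument run contrapositively.
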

\begin{proof}
Note that every pair $(A_i, B_j),(A_l,B_k)$ is not compatible. Otherwise by Lemma~\ref{oneless} the order $\preceq$ fails $CC_3$, which contradicts to the fact that every qualitative probability satisfies $CC_3$. Assume, to the contrary, that we have at least two identical coalitions among $\row A4$ or $\row B4$. Without loss of generality we can assume $A_1=A_2$.  Clearly all $A$'s or all $B$'s cannot coincide and there are at least two different $A$'s and two different $B$'s. Suppose $A_1 \neq A_3$ and $B_1 \neq B_2$.  The pair $(A_1, B_1), (A_3, B_2)$ is not compatible. It means one of the following two statements is true: either there is $x \in A_1 \cap A_3$ such that $x \notin B_1 \cup B_2$ or there is $y \in B_1 \cap B_2$ such that $y \notin A_1 \cup A_3$. Consider the first case the other one is similar.  We know that $x \in A_1 \cap A_3$ and we have at least three copies of $x$ among $\row A4$. At the same time $x \notin B_1 \cup B_2$ and there could be at most two copies of $x$ among $\row B4$. This is a contradiction.
\end{proof}

\begin{thm}
\label{cc4}
$CC_4^{*}$ holds for $\Delta=\Delta_n(\preceq, T)$ for all $n\le 17$.
\end{thm}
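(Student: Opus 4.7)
The plan is to argue by contradiction using a simple incidence/pigeonhole count. Suppose $n \le 17$ but $CC_4^{*}$ fails for $\Delta = \Delta_n(\preceq, T)$; then there exists a trading transform $(A_1,A_2,A_3,A_4;B_1,B_2,B_3,B_4)$ with $A_i \prec T \preceq B_j$ for all $i,j \in [4]$. Combining Lemma~\ref{oneless} with the fact that every qualitative probability order satisfies $CC_3$, for every choice of $i \ne j$ and $k \ne l$ in $[4]$ the two pairs $(A_i, B_k)$ and $(A_j, B_l)$ must be \emph{incompatible}: there is either some $x \in A_i \cap A_j$ with $x \notin B_k \cup B_l$ (a \emph{type-A witness}), or some $y \in B_k \cap B_l$ with $y \notin A_i \cup A_j$ (a \emph{type-B witness}).

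First I count the pairs-of-pairs needing a witness. Picking two $A$-indices ($\binom{4}{2}=6$ ways), two $B$-indices ($6$ ways), and one of the two possible matchings between them gives $6 \cdot 6 \cdot 2 = 72$ unordered pairs-of-pairs $\{(A_i,B_k),(A_j,B_l)\}$.

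Next I count how many pairs-of-pairs a single element $x \in [n]$ can witness. Because $(A_i;B_j)$ is a trading transform, $x$ has equal multiplicity $a(x)=b(x)$ in the $A$'s and in the $B$'s. A short case check rules out every multiplicity except $2$: if $a(x) \in \{0,1\}$ then $x \in A_i \cap A_j$ (and symmetrically $x \in B_k \cap B_l$) is impossible, while if $a(x) \in \{3,4\}$ then at most one $B$-set (resp.\ $A$-set) misses $x$, so we cannot choose two distinct ones disjoint from $x$. For a multiplicity-$2$ element, say $x \in A_i \cap A_j$ and $x \in B_{k'} \cap B_{l'}$ with $\{k,l\}=[4]\setminus\{k',l'\}$ and $\{i',j'\}=[4]\setminus\{i,j\}$, $x$ is a type-A witness for the two pairs-of-pairs obtained by matching $\{A_i,A_j\}$ against $\{B_k,B_l\}$ and a type-B witness for the two pairs-of-pairs obtained by matching $\{A_{i'},A_{j'}\}$ against $\{B_{k'},B_{l'}\}$; these four pairs-of-pairs are all distinct, so each such $x$ witnesses exactly $4$.

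The argument closes by pigeonhole: since every one of the $72$ pairs-of-pairs must be witnessed and each element of $[n]$ witnesses at most $4$, there must be at least $72/4 = 18$ distinct multiplicity-$2$ elements, so $n \geq 18$, contradicting $n \le 17$. The main obstacle is the bookkeeping in the preceding paragraph: one must verify both that no multiplicity other than $2$ can supply a witness and that each multiplicity-$2$ element accounts for exactly (and at most) $4$ pairs-of-pairs. This is precisely what makes the bound $18$ tight and matches the theorem's threshold $n \le 17$.
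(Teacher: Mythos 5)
Your proof is correct and is essentially the paper's argument recast as an explicit double-count: the paper encodes the same incompatibility requirements as columns of an $8\times n$ matrix $M$ drawn from the $36$-element set $U$ of balanced $0/1$-vectors, which complementation splits into $18$ pairs, and argues that each pair-type must be realized, whereas you count the $72$ incompatible pairs-of-pairs and divide by the $4$ that each multiplicity-$2$ element witnesses ($18\cdot 4=72$, so the two counts agree). A pleasant side effect of your phrasing is that it sidesteps Lemma~\ref{norepet} on distinctness of the eight coalitions, which the paper invokes before setting up $M$.
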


\begin{proof}
Let us consider the set of column vectors
\begin{equation}
\label{36vectors}
U=\{ {\bf x}\in \mathbb{R}^8\mid x_i\in \{0,1\}\ \text{and}\ x_1+x_2+x_3+x_4=x_5+x_6+x_7+x_8=2\}.
\end{equation}
This set has an involution ${\bf x}\mapsto {\bf \bar{x}}$, where $\bar{x}_i=1-x_i$. Say, if ${\bf x}=(1,1,0,0,0,0,1,1)^T$, then ${\bf \bar{x}}=(0,0,1,1,1,1,0,0)^T$. There are 36  vectors from $U$ which are split into 18 pairs $\{{\bf x}, {\bf \bar{x}}\}$.

Suppose now ${\mathcal T}=(A_1,A_2,A_3,A_4; B_1,B_2,B_3,B_4)$ is a trading transform, $A_i\prec T\preceq B_j$  and no two coalitions in the trading transform coincide.
Let us write the characteristic vectors of $A_1$, $A_2$, $A_3$, $A_4$, $B_1$, $B_2$, $B_3$, $B_4$ as rows of $8\times n$ matrix $M$, respectively.
Since $\preceq $ satisfies $CC_3$, by Lemma~\ref{oneless}  we know that no two pairs $(A_i,B_a)$ and $(A_j,B_b)$ are compatible. The same can be said about the complementary pair of pairs $(A_k,B_c)$ and $(A_l,B_d)$, where $\{a,b,c,d\}=\{i,j,h,l\}=[4]$. We have
\[
A_i \prec B_a,\text{ } A_j \prec B_b,\text{ } A_h \prec B_c,\text{ } A_l \prec B_d,
\]
Since $(A_i,B_a)$ and $(A_j,B_b)$ are not compatible one of the following two statements is true: either there exists $x\in A_i\cap A_j$ such that $x\notin B_a\cup B_b$ or  there exists $y\in B_a\cap B_b$ such that $x\notin A_i\cup A_j$. As $\mathcal T$ is the trading transform in the first case we will also have $x\in B_c\cap B_d$ such that $x\notin A_h\cup A_l$; in the second $y\in A_h\cap A_l$ such that $y\notin B_c\cup B_d$.

Let us consider two columns $M_x$ and $M_y$ of $M$ that corresponds to elements $x,y\in [n]$. The above considerations show that both belong to $U$ and $M_x=\bar{M}_y$.

In particular, if $(i,j,k,l)=(a,b,c,d)=(1,2,3,4)$, then the columns $M_x$ and $M_y$ will be as in the following picture
\newline
\hspace*{6.8cm} $x$\hspace{1.4cm}$y$
\[
M=
\left[
\begin{array}{cc}
\chi(A_1)\\
\chi(A_2)\\
\chi(A_3)\\
\chi(A_4)\\
\chi(B_1)\\
\chi(B_2)\\
\chi(B_3)\\
\chi(B_4)
\end{array}
\right]
=
\left[\begin{array}{ccccccccccccc}
&&&&1&&&&0&&&&\\
&&&&1&&&&0&&&&\\
&&&&0&&&&1&&&&\\
&&&&0&&&&1&&&&\\
\hline
&&&&0&&&&1&&&&\\
&&&&0&&&&1&&&&\\
&&&&1&&&&0&&&&\\
&&&&1&&&&0&&&&
\end{array}\right]
\]
(we emphasize however that we have only one such column in the matrix, not both).
We saw that one pairing of indices $(i,a), (j,b), (k,c), (k,d)$ gives us a column from one of the 18 pairs of $U$. It is easy to see that a vector from every pair of $U$ can be obtained by the appropriate choice of the pairing of indices. This means that the matrix contains at least 18 columns. That is $n\ge 18$.
\end{proof}

  While no initial segment
complex on fewer than $18$ points can fail $CC_4^{*}$, there is
such an example on $26$ points which will show that the initial
segment complexes strictly contain the threshold complexes. The
next three sections are devoted to constructing such an example.
The next section presents a general construction technique for
producing almost representable qualitative probability orders from
representable ones.  This technique will be employed in section 5
to construct our example. Some of the proofs required will be done
in section 6.

\color{black}

\section{Constructing almost representable orders from  nonlinear representable ones}
\label{almost}

Our approach to finding an initial segment complex that is not threshold will be to start with a non-linear representable qualitative probability order and then perturb it so as to produce an almost representable order. By judicious breaking of ties in this new order we will be able to produce an initial segment that will violate $CC_4^{*}$. The language of discrete cones will be helpful and we begin with a technical lemma that will needed in the construction.

\begin{prop}
Let $\preceq $ be a non-representable but almost representable
qualitative probability order which almost agrees with a
probability measure  ${\bf p}$. Suppose that the $m$th
cancellation condition $CC_m$ is violated, and that for some
non-zero vectors $\{\brow xm\}\subseteq C(\preceq )$ the condition
(\ref{axm}) holds, i.e., ${\bf x}_1 + \cdots + {\bf x}_m ={\bf 0}$
and ${\bf x}_i \notin C(\preceq)$ for at least one $i \in [m]$.
Then all of the vectors $\brow xm$ lie in the hyperplane $H_{\bf
p}$.
\end{prop}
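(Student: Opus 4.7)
The plan is to use the almost-representability hypothesis together with linearity of the inner product, exploiting the fact that a sum of non-negative numbers equals zero only when each summand is zero.

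First I would recall that, by almost-representability with respect to $\mathbf{p}$, the implication
\[
\mathbf{x}\in C(\preceq)\ \Longrightarrow\ (\mathbf{p},\mathbf{x})\ge 0
\]
holds for every nonzero $\mathbf{x}\in T^n$. Since each $\mathbf{x}_i$ lies in $C(\preceq)$, we get $(\mathbf{p},\mathbf{x}_i)\ge 0$ for every $i\in [m]$.

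Next I would apply linearity of the inner product to the hypothesized relation $\mathbf{x}_1+\cdots+\mathbf{x}_m=\mathbf{0}$, obtaining
\[
0=(\mathbf{p},\mathbf{0})=(\mathbf{p},\mathbf{x}_1)+\cdots+(\mathbf{p},\mathbf{x}_m).
\]
Since this is a sum of non-negative reals equal to zero, each term must vanish, so $(\mathbf{p},\mathbf{x}_i)=0$ for every $i$. By definition of $H_{\mathbf{p}}=\{\mathbf{x}\in\R^n\mid (\mathbf{p},\mathbf{x})=0\}$, this gives $\mathbf{x}_i\in H_{\mathbf{p}}$ for all $i\in [m]$.

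There is really no obstacle here: the hypothesis that $-\mathbf{x}_i\notin C(\preceq)$ for some $i$ is what forces the configuration to witness a genuine failure of $CC_m$ (rather than being trivial), but it is not needed in the argument itself. The statement is an immediate consequence of almost-representability plus the fact that the $\mathbf{x}_i$ sum to zero; the whole proof is essentially one displayed equation.
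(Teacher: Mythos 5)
Your proof is correct and takes essentially the same approach as the paper: both arguments use almost-representability to conclude $(\mathbf{p},\mathbf{x}_i)\ge 0$ for each $i$, and then note that since the $\mathbf{x}_i$ sum to zero, each inner product must vanish, placing every $\mathbf{x}_i$ in $H_{\mathbf{p}}$.
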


\begin{proof}
  First note that for every ${\bf x}\in C(\preceq )$ which does not belong
to $H_{\bf p}$, we have $({\bf p},{\bf x})>0$. Hence the condition
(\ref{axm}) can hold only when all ${\bf x}_i\in H_{\bf p}$.
\end{proof}

We need to understand how we can construct new qualitative probability orders from old ones
so we need the following investigation. Let $\preceq $ be a representable but not linear qualitative probability order which agrees with a
probability measure  ${\bf p}$.

Let
$S(\preceq )$ be the set of all vectors of $C(\preceq )$ which lie in the corresponding hyperplane $H_{\bf p}$.
Clearly, if ${\bf x}\in S(\preceq )$, then $-{\bf x}$  is a vector of $S(\preceq )$  as well. Since in the definition of
discrete cone it is sufficient that only one of these vectors is in $C(\preceq )$ we may try to remove
one of them in order to obtain a new qualitative probability order. The new order  will almost agree with ${\bf p}$ and hence will be at least almost representable. The
big question is: what are the conditions under which a set of vectors can be removed from $S(\preceq )$?

What can prevent us from removing a vector from $S(\preceq )$? Intuitively, we cannot remove a vector if the set comparison
corresponding to it is a consequence of those remaining. We need to consider what a consequence means formally.

There are two ways in which one set comparison might imply another one. The first way is by means of the de Finetti condition.
This however is already built in the definition of the discrete cone as $\chi(A,B)=\chi(A\cup C,B\cup C)$. Another way in which
a comparison may be implied from two other is transitivity. This has a nice algebraic characterisation. Indeed, if $C\prec B\prec  A$, then
$\chi(A,C)=\chi(A,B)+\chi(B,C)$. This leads us to the following definition.

Following \cite{CCS} let us define a restricted sum for vectors in a discrete cone ${C}$. Let ${\bf
u},{\bf v}\in {C}$. Then
\[
{\bf u}\oplus {\bf v}=
\left\{
\begin{array}{cl} {\bf u}+{\bf v}& \text{if   ${\bf u}+{\bf v}\in T^n
$},    \\\
\text{undefined} &  \text{if   ${\bf u}+{\bf v}\notin T^n $}.  \end{array}
\right.
\]
It was shown in \cite[Lemma~2.1]{PF1} that the transitivity of a qualitative probability order is equivalent to closedness of its corresponding discrete cone with respect to the restricted addition (without formally defining the latter). The axiom D3 of the discrete cone can be rewritten as
\begin{enumerate}
\item[{\rm D3.}] ${\bf x}\oplus {\bf y}\in C$ whenever ${\bf x},{\bf y}\in C$ and
${\bf x}\oplus {\bf y}$ is defined.
\end{enumerate}
Note that a restricted sum is not associative.



\begin{thm}[Construction method]
\label{constr}
Let $\preceq $ be a representable non-linear qualitative probability order
which agrees with the probability measure ${\bf p}$.
Let $S(\preceq )$ be the set of all vectors of $C(\preceq )$ which lie in the hyperplane $H_{\bf p}$.
Let $X$ be a subset of $S(\preceq )$ such that
\begin{itemize}
\item $X\cap \{{\bf s},-{\bf s}\}\ne\emptyset$ for every ${\bf s}\in S(\preceq )$.
\item $X$ is closed under the operation of restricted sum.
\end{itemize}
Then $Y=S(\preceq )\setminus X$ may be dropped from $C(\preceq )$, that is $C_Y=C(\preceq )\setminus Y$ is a discrete cone.
\end{thm}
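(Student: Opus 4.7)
The plan is to verify the three axioms D1, D2, D3 in turn for $C_Y = C(\preceq)\setminus Y$. The governing observation I would establish first is that $X\subseteq C_Y$: since $X\subseteq S(\preceq)\subseteq C(\preceq)$ and $X$ is disjoint from $Y=S(\preceq)\setminus X$, the removal of $Y$ leaves $X$ untouched. Dually, $S(\preceq)\cap C_Y=X$, which is the identity I will use repeatedly.

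Next I would dispose of D1 and the ``outside $H_{\bf p}$'' part of D2 by a single remark: only vectors of $H_{\bf p}$ are candidates for removal, because $Y\subseteq S(\preceq)$. For each standard basis vector, $({\bf p},{\bf e}_i)=p_i>0$, so ${\bf e}_i\notin S(\preceq)$ and therefore remains in $C_Y$; and $-{\bf e}_i$ never belonged to $C(\preceq)$ in the first place. For D2 in the remaining case ${\bf x}\in T^n\cap H_{\bf p}$, both ${\bf x}$ and $-{\bf x}$ lie in $S(\preceq)$, and the first hypothesis on $X$ guarantees that at least one of them lies in $X\subseteq C_Y$.

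The main work is D3. Given ${\bf x},{\bf y}\in C_Y$ with ${\bf x}+{\bf y}\in T^n$, I would first note that ${\bf x}+{\bf y}\in C(\preceq)$ since $C(\preceq)$ is itself a discrete cone, and then argue that ${\bf x}+{\bf y}\notin Y$ by splitting on whether ${\bf x}+{\bf y}\in H_{\bf p}$. If $({\bf p},{\bf x}+{\bf y})>0$ then ${\bf x}+{\bf y}\notin S(\preceq)\supseteq Y$, and we are done. Otherwise $({\bf p},{\bf x}+{\bf y})=0$; here I would invoke the representability of $\preceq$ by ${\bf p}$, which gives $({\bf p},{\bf z})\ge 0$ for every ${\bf z}\in C(\preceq)$, forcing $({\bf p},{\bf x})=({\bf p},{\bf y})=0$. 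Thus both ${\bf x}$ and ${\bf y}$ lie in $S(\preceq)\cap C_Y=X$, and the restricted-sum closure of $X$ delivers ${\bf x}+{\bf y}\in X\subseteq C_Y$.

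I do not expect a genuine obstacle; the only subtle point is to be careful that the nonstrict inequality $({\bf p},{\bf z})\ge 0$ for ${\bf z}\in C(\preceq)$ (which holds precisely because ${\bf p}$ represents $\preceq$) forces \emph{both} summands of a restricted sum landing in $H_{\bf p}$ to themselves lie in $H_{\bf p}$. That is exactly why the closure hypothesis on $X$ under $\oplus$ is the correct assumption, and why the two bullet conditions in the statement suffice.
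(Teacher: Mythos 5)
Your proof is correct and follows essentially the same approach as the paper, whose terse argument rests on the same key observation (a restricted sum of a vector off $H_{\bf p}$ with any cone vector cannot land in $S(\preceq)$); you have simply unpacked it into explicit verifications of D1, D2, D3.
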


\begin{proof}
We first note that if ${\bf x} \in C(\preceq) \setminus S(\preceq)$ and ${\bf y} \in C(\preceq)$, then  ${\bf x} \oplus {\bf y}$, if defined, cannot be in $S(\preceq)$. So due to closedness of $X$ under the restricted addition all axioms of a discrete cone are satisfied for $C_Y$.
On the other hand, if for some two vectors ${\bf x},{\bf y}\in X$ we have ${\bf x}\oplus {\bf y}\in Y$, then $C_Y$ would not be a discrete cone and we would not be able to construct a qualitative probability order associated with this set.
\end{proof}

\begin{exmp}[Positive example]
\label{ex4}
The probability measure
\[
{\bf p}=\frac{1}{16}(6,4,3,2,1).
\]
defines a qualitative probability order $\preceq $ on $[5]$ (which is better written from the other end):
\[
\emptyset\prec 5\prec 4\prec 3\prec 45\prec 35 \sim 2\prec 25\sim 34\prec 1
\prec 345\sim 24\prec 23\sim 15\prec 245\prec 14\sim 235\ldots .
\]
(Here only the first 17 terms are shown, since the remaining ones can be uniquely reconstructed. See \cite[Proposition~1]{KPS} for details).
There are only four equivalences here
\[
\ 35 \sim 2,\ \ 25 \sim 34,\  \ 23\sim 15\ \ \hbox{and} \ \ 14\sim 235,
\]
and all other follow from them, that is:
\begin{align*}
& 35 \sim 2\ \text{implies}\ 345\sim 24,\ 135\sim 12;\\
& 25 \sim 34\ \text{implies}\ 125 \sim 134;\\
& 23\sim 15\ \text{implies}\ 234 \sim 145;\\
& 14\sim 235\ \text{has no consequences}
\end{align*}
Let  ${\bf u}_1=\chi(2,35)=(0,1,-1,0,-1)$, ${\bf u}_2=\chi(34,25)=(0,-1,1,1,-1)$, ${\bf u}_3=\chi(15,23)=(1,-1,-1,0,1)$ and ${\bf u}_4=\chi(235,14)=(-1,1,1,-1,1)$.
Then
\[
S(\preceq)=\{\pm{\bf u}_1, \pm{\bf u}_2, \pm{\bf u}_3, \pm{\bf u}_4\}
\]
and $X=\{{\bf u}_1, {\bf u}_2, {\bf u}_3, {\bf u}_4\}$ is closed
under the restricted addition as ${\bf u}_i\oplus  {\bf u}_j$ is
undefined for all $i\ne j$. Note that ${\bf u}_i\oplus  { -\bf
u}_j$ is also undefined for all $i\ne j$. Hence we can subtract
from the cone $C(\preceq)$ any non-empty subset $Y$ of $-X=\{-{\bf
u}_1, -{\bf u}_2, -{\bf u}_3, -{\bf u}_4\}$ and still get a
qualitative probability. Since
\[
{\bf u}_1+{\bf u}_2+{\bf u}_3+{\bf u}_4={\bf 0}.
\]
it will not be representable. The new order corresponding to the discrete cone $C_{-X}$ is linear.
\end{exmp}

\begin{exmp}[Negative example]
\label{ex5} A certain qualitative probability order is associated
with the Gabelman game of order 3. Nine players are involved each
of whom we think as associated with a certain cell of a $3\times
3$ square:
\begin{center}
\begin{tabular}{|c|c|c|}
\hline
1 & 2 & 3\\
\hline
4 & 5 & 6\\
\hline
7 & 8 & 9\\
\hline
\end{tabular}
\end{center}
The $i$th player is given a positive weight $w_i$, $i=1,2,\ldots,
9$, such that in the qualitative probability order, associated
with ${\bf w}=(\row w9)$,
\[
147\sim 258\sim 369 \sim 123\sim 456\sim 789.
\]
Suppose that we want to construct a qualitative probability order
$\preceq $ for which
\[
147\sim 258\sim 369 \prec 123\sim 456\sim 789 .
\]
Then we would like to claim that it is not weighted since for the vectors
\begin{align*}
{\bf x}_1&=(0,1,1,-1,0,0,-1,0,0)=\chi(123,147),\\
{\bf x}_2&=(0,-1,0,1,0,1,0,-1,0)=\chi(456,258),\\
{\bf x}_3&=(0,0,-1,0,0,-1,1,1,0)=\chi(789,369)
\end{align*}
we have ${\bf x}_1+{\bf x}_2+{\bf x}_3={\bf 0}$.  Putting the sign $\prec $ instead of $\sim$ between $369$ and $123$ will also automatically imply $147\prec 123$, $258\prec 456$ and $369\prec 789$. This means that we are dropping the set of vectors
$\{-{\bf x}_1, -{\bf x}_2, -{\bf x}_3\}$ from the cone while leaving the set $\{{\bf x}_1, {\bf x}_2, {\bf x}_3\}$ there.  This would not be possible since ${\bf x}_1\oplus {\bf x}_2=-{\bf x}_3$. So every $X \supset \{{\bf x}_1, {\bf x}_2, {\bf x}_3\}$ with $X \cap \{-{\bf x}_1, -{\bf x}_2, -{\bf x}_3\} = \emptyset$ is not closed under $\oplus$.
\end{exmp}


\section{An example of a nonthreshold initial segment of a linear qualitative probability order}

In this section we shall construct an almost representable linear qualitative probability order  $\sqsubseteq $ on $2^{[26]}$ and a subset $T\subseteq [26]$, such that the initial segment $\Delta(\sqsubseteq,T)$ of $\sqsubseteq $ is not a threshold complex as it fails to satisfy the condition $CC^{*}_4$.

The idea of the example is as follows.
We will start with a representable linear qualitative probability order  $\preceq$ on $[18]$ defined by weights $\row w{18}$ and extend it to a representable but nonlinear qualitative probability order  $\preceq' $ on $[26]$ with weights $\row w{26}$. A distinctive feature of $\preceq'$ will be the existence of eight sets $A'_1, \ldots, A'_4$, $B'_1, \ldots, B'_4$ in $[26]$ such that:
\begin{enumerate}
  \item The sequence $(A'_1, \ldots, A'_4;B'_1, \ldots, B'_4)$ is  a trading transform.
  \item The sets $A'_1, \ldots, A'_4$, $B'_1, \ldots, B'_4$ are tied in $\preceq'$, that is,
  \[
  A'_1\sim' \ldots A'_4\sim' B'_1\sim' \ldots \sim' B'_4.
  \]
  \item If any two distinct sets $X,Y \subseteq [26]$ are tied in $\preceq'$, then $\chi(X,Y) =\chi (S,T)$, where $S,T \in \{A'_1, \ldots, A'_4,B'_1, \ldots, B'_4\}$. In other words all equivalences in $\preceq'$ are consequences of $A'_i\sim'  A'_j$, $A'_i\sim'  B'_j$, $B'_i\sim' B'_j$, where $i,j \in [4]$.
  \end{enumerate}

Then we will use  Theorem~\ref{constr} to untie the eight sets and to construct a comparative probability order $\sqsubseteq $ for which
\begin{equation*}
A'_1 \sqsubset  A'_2 \sqsubset A'_3 \sqsubset A'_4  \sqsubset B'_1 \sqsubset B'_2 \sqsubset B'_3 \sqsubset B'_4,
\end{equation*}
where $X \sqsubset Y$ means that $X \sqsubseteq Y$ is true but not $Y \sqsubseteq X$.

This will give us  an initial segment $\Delta(\sqsubseteq , B'_1)$ of the linear qualitative probability order  $\sqsubseteq $, which is not  threshold since  $CC^{*}_4$ fails to hold. \par\medskip

Let $\preceq$ be a representable linear qualitative probability order  on $2^{[18]}$ with weights $w_1, \ldots , w_{18}$ that are linearly independent (over $\mathbb{Z}$)  real numbers  in the interval~$[0, 1]$. Due to the choice of weights, no two distinct subsets $X, Y \subseteq [18]$ have equal weights relative to this system of weights, i.e.,
\[
X\ne Y \Longrightarrow w(X)=\sum_{i\in X}w_i \ne w(Y)=\sum_{i\in Y}w_i.
\]

Let us consider again the set $U$ defined in (\ref{36vectors}).
Let $M$ be a subset of  $U$ with the following properties: $|M|=18$ and  ${\bf x}\in M$ if and only if ${\bf \bar{x}}\notin M$. In other words $M$ contains exactly one vector from every pair into which $U$ is split.  By $M$ we will also denote an $8\times18$ matrix whose columns are all the vectors from $M$ taken in arbitrary order. By $A_1,\ldots, A_4, B_1, \ldots, B_4 $ we denote the sets with characteristic vectors equal to the rows $\row M8$  of $M$, respectively.  The way $M$ was constructed secures that the following lemma is true.

\begin{lem}
\label{propertiesofM}
The subsets $A_1,\ldots, A_4, B_1, \ldots, B_4$ s of $[18]$ satisfy:
\begin{enumerate}
\item $(A_1, \ldots , A_4; B_1 , \ldots , B_4 )$ is a trading transform;
\item for any choice of $i,k,j,m \in [4]$ with $i \neq k$ and $ j \neq m$ the  pair $(A_i, B_j), (A_k, B_m)$ is not compatible.
\end{enumerate}
\end{lem}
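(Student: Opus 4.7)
\medskip

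My plan is to translate both claims directly into statements about the columns of the $8\times 18$ matrix $M$, using the correspondence that the $i$-th row is the characteristic vector of $A_i$ (for $i\le 4$) or $B_{i-4}$ (for $i\ge 5$). Under this dictionary, for a column ${\bf c}$ corresponding to some element $p\in [18]$, the entries $c_1,\ldots,c_4$ record which of $A_1,\ldots,A_4$ contain $p$, and $c_5,\ldots,c_8$ record which of $B_1,\ldots,B_4$ contain~$p$.

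For part (1), the defining property of $U$ is precisely that $c_1+c_2+c_3+c_4=c_5+c_6+c_7+c_8=2$ for every column ${\bf c}\in M$. This says that every $p\in [18]$ belongs to exactly two of the $A_i$'s and exactly two of the $B_j$'s, so the multisets $\{A_1,\ldots,A_4\}$ and $\{B_1,\ldots,B_4\}$ have identical element multiplicities, which is exactly the definition of a trading transform.

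For part (2), fix $i\neq k$ in $[4]$ and $j\neq m$ in $[4]$. The failure of compatibility would be witnessed by either an element of $A_i\cap A_k$ outside $B_j\cup B_m$, or an element of $B_j\cap B_m$ outside $A_i\cup A_k$. The first witness corresponds to a column ${\bf c}$ with $c_i=c_k=1$, $c_{j+4}=c_{m+4}=0$; combined with the constraint that the first four and last four coordinates each sum to $2$, this forces the other two $A$-entries to be $0$ and the other two $B$-entries to be $1$, so ${\bf c}$ is uniquely determined. Call it ${\bf c}^{(1)}$. Likewise the second witness is a uniquely determined vector ${\bf c}^{(2)}\in U$, and a direct check shows ${\bf c}^{(2)}=\overline{{\bf c}^{(1)}}$. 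Since $M$ contains exactly one vector from each pair $\{{\bf x},\bar{\bf x}\}$ of $U$, it contains either ${\bf c}^{(1)}$ or ${\bf c}^{(2)}$ as a column, and that column supplies the required non-compatibility witness.

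The argument is essentially a bookkeeping exercise, and the only real care required is in keeping the row/column roles separate: rows index the eight distinguished sets, while columns index the ground set $[18]$. The key mechanism is the bijection between the ordered quadruples $(i,k,j,m)$ (unordered in each pair) and the involution-pairs of $U$: there are $\binom{4}{2}^2=36$ quadruples, matching the $36$ vectors of $U$, and each involution-pair $\{{\bf c}^{(1)},{\bf c}^{(2)}\}$ corresponds to exactly one quadruple so that the two vectors encode the two possible types of non-compatibility witnesses for that quadruple. This is precisely why $|M|=18$ suffices to force non-compatibility for every choice of $(i,k,j,m)$, and why the construction of $M$ is tight.
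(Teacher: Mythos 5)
Your proof is correct, and since the paper declines to prove this lemma (it simply remarks that the construction of $M$ ``secures'' it), your argument is a valid fill-in rather than a departure from the paper's approach. The translation of both conditions into column-sum and column-membership statements about $M$ is exactly the right dictionary: the constraint $c_1+\cdots+c_4=c_5+\cdots+c_8=2$ gives the trading-transform property at once, and the observation that the unique type-1 witness $\mathbf{c}^{(1)}$ and the unique type-2 witness $\mathbf{c}^{(2)}$ for a given $(\{i,k\},\{j,m\})$ form an involution pair $\{\mathbf{c},\bar{\mathbf{c}}\}$ is precisely why the ``one from each pair'' choice of $M$ forces a witness.

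One small inaccuracy in your concluding paragraph: the correspondence between quadruples $(\{i,k\},\{j,m\})$ and involution-pairs of $U$ is not one-to-one but two-to-one. The map $(\{i,k\},\{j,m\})\mapsto \mathbf{c}^{(1)}$ is indeed a bijection onto the $36$ vectors of $U$, but the complementary quadruple $([4]\setminus\{i,k\},\,[4]\setminus\{j,m\})$ produces the same unordered pair $\{\mathbf{c}^{(1)},\mathbf{c}^{(2)}\}$ (its $\mathbf{c}^{(1)}$ is your $\mathbf{c}^{(2)}$ and vice versa). So the $18$ involution-pairs cover the $36$ quadruples two at a time — this is the same pairing the paper exploits in the proof of its Theorem about $CC_4^{*}$ when it speaks of ``the complementary pair of pairs.'' This does not affect your main argument, since all you need is that for each quadruple one of the two witness vectors lies in $M$, and that follows regardless of the multiplicity of the correspondence.
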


We shall now embed  $A_1, \ldots, A_4, B_1 , \ldots, B_4$ into $[26]$ and add new elements to them forming $A'_1, \ldots, A'_4,$ $ B'_1, \ldots, B'_4$ in such a way that the characteristic vectors $\chi(A'_1) ,\ldots, \chi(A'_4),$ $\chi(B'_1),\ldots, \chi(A'_1)$  are the rows $M_1',\ldots,M_8'$ of the following matrix
\begin{equation}
\label{matrix}
M'=\quad \kbordermatrix{
& 1 \ldots 18 & \vrule &19 \text{ } \text{ } 20 \text{ } \text{ } 21 \text{ } \text{ } 22 & \vrule& 23 \text{ } \text{ } 24 \text{ } \text{ } 25 \text{ } \text{ } 26 \\
&\begin{array}{c}\chi(A_1) \\ \chi(A_2) \\ \chi(A_3) \\ \chi(A_4)\\ \end{array} & \vrule & I & \vrule& I \\ \hline
&\begin{array}{c}\chi(B_1) \\ \chi(B_2) \\ \chi(B_3) \\ \chi(B_4) \end{array} & \vrule & J & \vrule& I  } ,
\end{equation}
respectively.
%
 Here $I$ is the $4 \times 4$ identity matrix and  $$J=\left( \begin{array}{cccc}
 0 & 0& 0& 1 \\
 1 & 0& 0& 0 \\
 0 & 1& 0& 0 \\
 0 & 0& 1& 0
 \end{array} \right).$$


Note that if $X$ belongs to $[18]$, it also belongs to $[26]$, so the notation $\chi (X)$ is ambiguous as it may be a vector  from ${\mathbb Z}^{18}$  or from ${\mathbb Z}^{26}$, depending on the circumstances. However the reference set will be always clear from the context  and the use of this notation will create no confusion.

One can see that $(A'_1, \ldots,A'_4;B'_1,\ldots,B'_4)$ is again a trading transform and there are no compatible pairs $(A'_i , B'_j ), (A'_k , B'_m)$, where $i,k,j,m \in [4] \text{ and }i \neq k\ \text{or}\  j \neq m.$ We shall now choose weights  $w_{19}, \ldots, w_{26}$ of new elements $19, \ldots, 26$ in such a way that the sets $A'_1, A'_2, A'_3, A'_4, B'_1, B'_2, B'_3, B'_4$ all have the same weight $N$, which is a sufficiently large number. It will be clear from the proof how large it should be.

To find weights $w_{19}, \ldots, w_{26}$ that satisfy this condition we need to solve the following system of linear equations
\begin{equation}
\label{vesa}
\left( \begin{array}{cc}
 I &I  \\
 J & I
\end{array} \right)
\left( \begin{array}{c}
w_{19} \\
\vdots \\
w_{26}
\end{array} \right)=N\textbf{1}-
M\cdot {\bf w},
\end{equation}
where  $\textbf{1}=(1,\ldots, 1)^T \in {\mathbb R}^8$ and ${\bf w}=(w_1, \ldots, w_{18})^T \in \mathbb{R}^{18}$.

The matrix from (\ref{vesa}) has rank~$7$, and the augmented matrix of the system has the same rank. Therefore, the solution set is not empty, moreover, there is one free variable (and any one can be chosen for this role). Let this free variable be $w_{26}$ and let us give it value $K$, such that $K$ is large but much smaller than $N$. In particular, $126 < K<N$.
Now we can express all other weights $w_{19}, . . . , w_{25}$ in terms of $w_{26} = K$ as follows:
\begin{equation}\label{urav} \begin{split}
w_{19}  =  N - &K -(\chi(A_4)   -  \chi(B_1)+  \chi(A_1)) \cdot {\bf w}  \\
w_{20}  =  N - & K -(\chi(A_4)   -  \chi(B_1) +  \chi(A_1) - \chi(B_2)  + \chi(A_2)  )  \cdot {\bf w} \\
w_{21}  =  N - & K -(\chi(A_4)  -  \chi(B_1)  +  \chi(A_1)- \chi(B_2)  + \chi(A_2) -\\
& \chi(B_3)+ \chi(A_3))\cdot {\bf w} \\
w_{22}  =  N - &K -\chi(A_4) \cdot {\bf w} \\
w_{23}  =  K -&(- \chi(A_4)+\chi(B_1)  )  \cdot {\bf w}\\
w_{24}  =  K -&(  - \chi(A_4)+\chi(B_1) -  \chi(A_1)+  \chi(B_2)  )  \cdot {\bf w}  \\
w_{25}  =  K -&(-  \chi(A_4) +\chi(B_1)  -  \chi(A_1)  +  \chi(B_2) - \chi(A_2) +  \chi(B_3)  )\cdot {\bf w}.
\end{split} \end{equation}

By choice of $N$ and $K$ weights $w_{19}, . . . , w_{25}$ are positive. Indeed, all ``small'' terms in the right-hand-side of~(\ref{urav}) are strictly less then $7 \cdot 18=126<\min\{K,N-K\}$.

Let $\preceq'$ be the representable qualitative probability order  on $[26]$ defined by the weight vector ${\bf w}'=(w_1, \ldots, w_{26})$. Using $\preceq'$
we would like to construct a linear qualitative probability order  $\sqsubseteq$ on $2^{{[26]}}$  that ranks the subsets $A_i'$ and $B_j'$ in the sequence 
\begin{equation}
\label{eightstrict}
A'_1 \sqsubset  A'_2 \sqsubset A'_3 \sqsubset A'_4  \sqsubset B'_1 \sqsubset B'_2 \sqsubset B'_3 \sqsubset B'_4.
\end{equation}

\par\medskip

We will make use of Theorem~\ref{constr} now. Let $H_{{\bf w}'}=\{x \in {\mathbb R}^n | ({\bf w}',x)=0\}$ be the hyperplane with the normal vector ${\bf w}'$ and $S(\preceq')$ be the set of all vectors of the respective discrete cone $C(\preceq')$ that lie in $H_{{\bf w}'}$. Suppose
\[
X'= \{ \chi(C,D) \mid C, D \in \{A'_1, \ldots, A'_4, B'_1, \ldots, B'_4\}\ \text{and $D$ earlier than $C$ in (\ref{eightstrict})}\}.
\]
This is a subset of $T^{26}$, where $T=\{-1,0,1\}$. Let also $Y'=S(\preceq')\setminus X'$. To use Theorem~\ref{constr} with the goal to achieve (\ref{eightstrict}) we need to show, that
\begin{itemize}
  \item $S(\preceq') = X' \cup -X'$ and
  \item $X'$ is closed under the operation of restricted sum.
\end{itemize}
If we could prove this, then $C(\sqsubseteq) = C(\preceq') \setminus Y'$ is a discrete cone of a linear qualitative probability order  $\sqsubseteq $ on $[26]$ satisfying (\ref{eightstrict}). Then the initial segment $\Delta(\sqsubseteq, B'_1)$ will not be a threshold complex, because the condition $CC_4^{*}$ will fail for it. \par\medskip

Let $Y$ be one of the sets $A_1, A_2, A_3, A_4, B_1, B_2, B_3, B_4$. By $\breve{Y}$ we will denote the corresponding superset of $Y$ from the set $\{A'_1, A'_2, A'_3, A'_4, B'_1, B'_2, B'_3, B'_4\} $.

\begin{prop}
\label{closedZ}
The subset
\[
X= \{ \chi(C,D) \mid C,D \in \{A_1, \ldots, A_4, B_1, \ldots, B_4\}\ \text{with $ \breve{D}$ earlier than $\breve{C}$  in (\ref{eightstrict})}\}.
\]
 of $T^{18}$ is closed under the operation of restricted sum.
\end{prop}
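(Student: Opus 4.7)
My plan is to establish closure by a case analysis on the configuration of $\{C_1, D_1, C_2, D_2\}$. Writing $\mathbf{s} = \chi(C_1) + \chi(C_2) - \chi(D_1) - \chi(D_2)$, I will show that the only configurations producing $\mathbf{s} \in T^{18}$ are the transitivity cases $C_1 = D_2$ or $D_1 = C_2$, in which $\mathbf{s}$ has the form $\chi(C') - \chi(D')$ with $\breve{D}'$ earlier than $\breve{C}'$ by transitivity of the order (\ref{eightstrict}), and hence lies in $X$. In all other configurations I will exhibit a column of $M$ on which $\mathbf{s}$ takes value $\pm 2$, so that $\mathbf{v}_1 \oplus \mathbf{v}_2$ is undefined and the implication is vacuous.

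The key tool is a complement-pair symmetry. Since the coefficients $(1,1,-1,-1)$ in $\mathbf{s}$ sum to zero, a direct calculation using $\chi(r_i)_{\bar x} = 1 - \chi(r_i)_x$ gives $\mathbf{s}(\bar{x}) = -\mathbf{s}(x)$ for every $x \in U$, where $\bar x$ is the coordinatewise complement. Because $M$ contains exactly one vector from each complement-pair in $U$, the condition that every column $j$ of $M$ satisfies $\mathbf{s}_j \in \{-1,0,1\}$ is equivalent to $|\mathbf{s}(x)| \le 1$ holding for \emph{every} $x \in U$. Hence, to rule out $\mathbf{v}_1 \oplus \mathbf{v}_2$ being defined in a given case, it suffices to produce a single pattern $x \in U$ at which $|\mathbf{s}(x)| = 2$; its complement $\bar x$ automatically realizes the opposite sign.

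The analysis itself proceeds as follows. If $C_1 = D_2$ (or symmetrically $D_1 = C_2$), then $\mathbf{s} = \chi(C_2) - \chi(D_1)$ and transitivity of the order yields $\breve{D}_1 \prec \breve{C}_1 = \breve{D}_2 \prec \breve{C}_2$, placing $\mathbf{s}$ in $X$; both equalities cannot hold simultaneously, as that would give a cyclic chain. Otherwise, we are in one of the disjoint subcases (a) $C_1 = C_2 = C$ with $D_1 \ne D_2$, (b) $D_1 = D_2 = D$ with $C_1 \ne C_2$, or (c) all four sets distinct. In (a) the sum equals $2\chi(C) - \chi(D_1) - \chi(D_2)$, and a short tabulation over the six possible $\alpha$-types (resp.\ $\beta$-types, depending on whether $D_1, D_2$ sit in the $A$-block or the $B$-block) always produces a complement-pair of patterns attaining $\pm 2$---namely the one whose relevant index set coincides with the indices of $D_1, D_2$ but misses the index of $C$. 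Subcase (b) is completely symmetric.

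In subcase (c), when $\{C_1, C_2\}$ is a pair of $B$'s and $\{D_1, D_2\}$ is a pair of $A$'s (with all four indices forced distinct), Lemma~\ref{propertiesofM}(2) directly yields incompatibility of the corresponding pairs. The remaining type-splittings (all four in the $A$-block, all four in the $B$-block, three-and-one, or the "crossed" two-and-two pairing) each succumb to the same pattern-finding strategy as in (a), the relevant bad patterns again forming complement-pairs in $U$. The principal obstacle is only the bookkeeping volume of these subcases, but each is dispatched uniformly by the complement-pair principle: once one bad pattern is identified, its complement provides the opposite sign automatically, and $M$ must contain one of them.
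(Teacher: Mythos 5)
Your strategy matches the paper's at its core---isolate the transitivity configurations as the only ones yielding a defined sum, and in every other configuration exhibit a column of $M$ on which $\mathbf{s}$ takes value $\pm 2$---but your decomposition is genuinely different and arguably cleaner. The paper splits by the type of $\mathbf{u},\mathbf{v}$ (whether each is $\chi(B_i,A_j)$, $\chi(B_i,B_j)$, or $\chi(A_i,A_j)$), and its five listed cases in fact omit the mixed-type pairs (e.g.\ $\mathbf{u}=\chi(A_i,A_j)$ with $\mathbf{v}=\chi(B_k,B_m)$) behind an unexplained ``without loss of generality''; your split by coincidences among $\{C_1,D_1,C_2,D_2\}$ is type-agnostic, so those cases fall out automatically. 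Your complement-pair identity $\mathbf{s}(\bar{x})=-\mathbf{s}(x)$, which rests only on the coefficients $(1,1,-1,-1)$ summing to zero, also gives a uniform reason why $M$ must carry a bad column no matter which representative of each pair $\{x,\bar{x}\}\subseteq U$ it contains; the paper encodes the same information piecemeal into Lemma~\ref{propertiesofM}(2) and appeals to ``the construction of $M$.'' Two small repairs are needed: your ``otherwise'' trichotomy (a)/(b)/(c) omits the configuration $C_1=C_2$ together with $D_1=D_2$, i.e.\ $\mathbf{u}=\mathbf{v}$ (trivial, since $2\mathbf{u}\notin T^{18}$ when $\mathbf{u}\ne\mathbf{0}$, but it must be listed); and in subcase (c) the phrase ``all four indices forced distinct'' is imprecise, since $A$-indices and $B$-indices live on different blocks and may coincide numerically---what you actually have, and what Lemma~\ref{propertiesofM}(2) requires, is just $i\ne k$ and $j\ne m$. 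The advertised tabulation in (a)--(c) does succeed: none of those configurations ever forces more than two $1$'s or more than two $0$'s within a single four-coordinate block, which is exactly the freedom a vector of $U$ allows, so a witness $x$ with $|\mathbf{s}(x)|=2$ always exists; but a complete proof would still need to walk through those subcases explicitly.
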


\begin{proof}
Let ${\bf u}$ and ${\bf v}$ be any two vectors in $X$. As we will see the restricted sum ${\bf u} \oplus {\bf v}$ is almost always undefined. Without loss of generality we can consider only five cases.\par\smallskip

{\bf Case 1.} ${\bf u}= \chi (B_i, A_j)$ and ${\bf v}=\chi (B_k, A_m)$, where $i \neq k$ and $j \neq m$. In this case by Lemma~\ref{propertiesofM} the pairs $(B_i, A_j)$ and $ (B_k, A_m)$ are not compatible.  It means that there exists $p \in [18]$ such that either $p \in B_i \cap B_k$ and $p \notin A_j \cup A_m$ or $p \in A_j \cap A_m$ and $p \notin B_i \cup B_k$. The vector ${\bf u} + {\bf v}$ has $2$ or $-2$ at $p$th  position and ${\bf u} \oplus {\bf v}$ is undefined.  This is illustrated in the table below:\par\bigskip

\begin{tabular}{|c|c|c|c|c|c|c|c|} \hline
 & $\chi(B_i)$ & $\chi (B_k)$ & $\chi(A_j)$ & $ \chi(A_m)$ & $\chi (B_i, A_j)$ & $\chi (B_k, A_m)$ & ${\bf u}+{\bf v}$\\ \hline
$p$th & 1 & 1& 0& 0 & 1& 1 & 2\\
coordinate & 0 & 0& 1& 1 & -1& -1 & -2\\ \hline
\end{tabular}

\vspace{3mm}

{\bf   Case 2.} ${\bf u}= \chi (B_i, A_j)$, ${\bf v}=\chi (B_i, A_m)$ or ${\bf u}=\chi (B_j, A_i)$, $ {\bf v}=\chi (B_m,A_i)$, where $j \neq m$.
In this case choose $k \in [4]\setminus \{i\}$. Then the pairs $(B_i, A_j)$ and $(B_k, A_m)$ are not compatible. As above, the vector $\chi (B_i, A_j)+\chi(B_k,A_m)$ has $2$ or $-2$ at some position~$p $. Suppose $p \in B_i \cap B_k$ and $p \notin A_j \cup A_m$.  Then $B_i$  has a $1$ in $p$th position and each of the vectors $\chi (B_i, A_j)$ and $\chi (B_i, A_m)$ has a $1$ in $p$th position as well. Therefore, ${\bf u} \oplus {\bf v}$ is undefined because ${\bf u} + {\bf v}$ has $2$ in $p$th position.  Similarly, in the case when $p \in A_j \cap A_m$ and $p \notin B_i \cup B_k$ the $p$th coordinate of ${\bf u} + {\bf v}$ is $-2$.
The case when ${\bf u}=\chi (B_j, A_i)$ and  ${\bf v}=\chi (B_m,A_i)$ is similar.\par\smallskip

{\bf Case 3.} ${\bf u}= \chi (B_i, B_j)$, ${\bf v}=\chi (B_k, B_m)$ or ${\bf u}=\chi (A_i, A_j)$,  ${\bf v}=\chi (A_k,A_m)$, where 
$\{i,j,k,m\}=[4]$.
By construction of $M$ there exists $p\in [18]$ such that $p\in B_i\cap B_k$ and $p\notin B_j\cup B_m$ or $p\notin B_i\cup B_k$ and $p\in B_j\cap B_m$. So there is $p \in [18]$, such that ${\bf u} + {\bf v}$ has $2$ or $-2$ in $p$th position. Thus ${\bf u} \oplus {\bf v}$ is undefined.\par\smallskip

{\bf Case 4.} ${\bf u}= \chi (B_i, B_j)$, ${\bf v}=\chi (B_k, B_m)$ or ${\bf u}=\chi (A_i, A_j)$, $ {\bf v}=\chi (A_k,A_m)$, where $i = k$ or $j = m$.
If $i=k$ and $j=m$, then ${\bf u} \oplus {\bf v}$ is undefined. Consider the case $i=k$, $j \neq m$ and ${\bf u}= \chi (B_i, B_j)$, ${\bf v}=\chi (B_i, B_m)$.
Let $s= [4] \setminus \{i,j,m\}$. By construction of $M$ either we have $p\in [18]$ such that $p\in B_i\cap B_s$ and $p\notin B_j\cup B_m$ or $p\notin B_i\cup B_s$ and $p\in B_j\cap B_m$. In both cases ${\bf u} + {\bf v}$  has $2$ or $-2$ in position $p$. \par\smallskip

{\bf Case 5.} ${\bf u}= \chi (B_i, B_j)$, ${\bf v}=\chi (B_k, B_m)$ or ${\bf u}=\chi (A_i, A_j)$, $ {\bf v}=\chi (A_k,A_m)$, where $ j =k$ or $i = m$.
Suppose $ j =k$. Since $i > j$ and $j> m$ we have $i> m$. This implies that $\chi (B_i, B_m)$ belongs to $X$. On the other hand ${\bf u} + {\bf v} = \chi(B_i) - \chi(B_m)= \chi(B_i, B_m)$. Therefore ${\bf u} \oplus {\bf v}={\bf u} + {\bf v} \in X$.
\end{proof}

\begin{cor}\label{closedX}
$X'$ is closed under restricted sum.
\end{cor}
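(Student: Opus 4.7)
The plan is to reduce the corollary to Proposition~\ref{closedZ} via the coordinate projection $\pi\colon \mathbb{Z}^{26}\to\mathbb{Z}^{18}$ onto the first $18$ coordinates. Because $\breve{Y}\cap[18]=Y$ for every $Y\in\{A_1,\ldots,A_4,B_1,\ldots,B_4\}$, the map sends $\chi(\breve{C},\breve{D})\in X'$ to $\chi(C,D)\in X$, so $\pi(X')\subseteq X$. Given ${\bf u}',{\bf v}'\in X'$ with ${\bf u}'\oplus{\bf v}'$ defined, write ${\bf u}'=\chi(\breve{C}_1,\breve{D}_1)$ and ${\bf v}'=\chi(\breve{C}_2,\breve{D}_2)$; the hypothesis ${\bf u}'+{\bf v}'\in T^{26}$ at once gives $\pi({\bf u}')+\pi({\bf v}')\in T^{18}$, so by Proposition~\ref{closedZ} this projected sum lies in $X$.

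The next step is to use the case analysis in the proof of Proposition~\ref{closedZ} to pin down the shape of any defined restricted sum in $X$: the ``chain collapse'' of Case~5 is the only surviving possibility, so up to the obvious symmetric interchange we may take $D_1=C_2$ as subsets of $[18]$, with $\pi({\bf u}')+\pi({\bf v}')=\chi(C_1,D_2)$.

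Lifting this identification from $[18]$ to $[26]$ uses the pairwise distinctness of $A_1,\ldots,A_4,B_1,\ldots,B_4$ as subsets of $[18]$, which follows from the construction of $M$: its $18$ columns, one from each pair $\{{\bf x},-{\bf x}\}\subset U$, separate every two rows. So $D_1=C_2$ in $[18]$ forces $D_1$ and $C_2$ to be the same element of the labelled collection $\{A_1,\ldots,B_4\}$, whence $\breve{D}_1=\breve{C}_2$ as subsets of $[26]$. Then $\chi(\breve{D}_1)=\chi(\breve{C}_2)$ cancels on all $26$ coordinates and ${\bf u}'+{\bf v}'=\chi(\breve{C}_1,\breve{D}_2)\in X'$, as required.

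The main obstacle is confirming the two inputs just used — that the chain collapse is the only source of a defined restricted sum in $X$, and that the eight rows of $M$ are pairwise distinct — since once these are secured, the added coordinates $19,\ldots,26$ demand no separate verification: the cancellation established at the projected level propagates to all $26$ coordinates at once.
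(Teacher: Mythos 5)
Your proof is correct, and it takes a slightly different route from the paper. The paper simply re-runs the five cases from Proposition~\ref{closedZ} directly in $\mathbb{Z}^{26}$: in Cases 1--4 the obstruction coordinate in $[18]$ already forces a $\pm 2$, and in Case 5 the explicit index equality $j=k$ (or $i=m$) gives the cancellation $\chi(\breve{C}_1)-\chi(\breve{D}_1)+\chi(\breve{C}_2)-\chi(\breve{D}_2)=\chi(\breve{C}_1)-\chi(\breve{D}_2)$ outright. You instead project, invoke the Proposition, deduce the chain-collapse shape, and then lift. This buys you a cleaner separation between the $[18]$ analysis and the $[26]$ extension, but it also forces you to supply one ingredient the paper's route does not need: the pairwise distinctness of $A_1,\ldots,A_4,B_1,\ldots,B_4$ as subsets of $[18]$. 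The paper knows the labels directly from the case description, whereas you recover the label equality from the set equality $D_1=C_2$, and without distinctness that inference would fail. The distinctness claim itself is correct and follows exactly as you sketch (the separating columns for any two rows form a nonempty union of $\{{\bf x},{\bf \bar{x}}\}$-pairs, so $M$ contains one), but it deserves to be recorded, since nothing in the paper's statement of the construction explicitly asserts it.

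Two small points to tidy up. First, the involution on $U$ is ${\bf x}\mapsto{\bf \bar{x}}$ with $\bar{x}_i=1-x_i$, not ${\bf x}\mapsto-{\bf x}$. Second, ``the chain collapse of Case~5 is the only surviving possibility'' is slightly imprecise as written: the five enumerated cases of Proposition~\ref{closedZ} cover only the sums of two $\chi(B,A)$'s and two $\chi(B,B)$'s (resp.\ $\chi(A,A)$'s); the mixed sums $\chi(B_i,A_j)+\chi(A_j,A_m)$ and $\chi(B_i,A_j)+\chi(B_k,B_i)$ also collapse and are not literally one of those five. Your argument handles them anyway, because you only use the fact that a \emph{defined} restricted sum of two elements of $X$ must collapse as $D_1=C_2$ or $C_1=D_2$, which holds across all mixed types as well; but the phrasing should not be tied to the label ``Case~5.''
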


\begin{proof}
We will have to consider the same five cases as in the Proposition~\ref{closedZ}. As above in the first four cases the restricted sum of vectors will be undefined. In the fifth case, when ${\bf u}= \chi (B'_i, B'_j)$, ${\bf v}=\chi (B'_k, B'_m)$ or ${\bf u}=\chi (A'_i, A'_j)$, $ {\bf v}=\chi (A'_k,A'_m)$, where $ j =k$ or $i = m$, we will have  ${\bf u} + {\bf v} = \chi(B'_i) - \chi(B'_m)= \chi(B'_i, B'_m)\in X'$ or ${\bf u} + {\bf v} = \chi(A'_i) - \chi(A'_m)= \chi(A'_i, A'_m)\in X'$.
\end{proof}

To satisfy conditions of Theorem~\ref{constr} we need also to show that the intersection of the discrete cone $C(\preceq')$ and the hyperplane $H_{{\bf w}'}$ equals to $X' \cup -X'$. More explicitly we need to prove the following:

\begin{prop}
\label{conseq}
Suppose $C,D \subseteq [26]$ are tied in $\preceq'$, that is $C\preceq' D$ and $D\preceq' C$. Then $\chi(C,D) \in X' \cup -X'$.
\end{prop}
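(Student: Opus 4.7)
The plan is to exploit the $\mathbb{Q}$-linear independence of $\{1, w_1, \ldots, w_{18}, K, N\}$, which one may assume holds by choosing $N$ and $K$ generically subject to $126 < K < N$. Expanding ${\bf w}' \cdot \chi(C, D) = 0$ via the formulas~(\ref{urav}) and separately equating to zero the coefficients of $N$, of $K$, and of each $w_i$ for $i \in [18]$ then yields the two ``trading'' conditions
\[
c_{19} + c_{20} + c_{21} + c_{22} = 0, \qquad c_{23} + c_{24} + c_{25} + c_{26} = 0,
\]
together with 18 relations that, after regrouping by $a_{j,i}$ and $b_{j,i}$, take the form $c_i = \sum_{j=1}^4 \alpha_j\, a_{j,i} + \sum_{j=1}^4 \beta_j\, b_{j,i}$ for $i \in [18]$, where the $\alpha_j$ and $\beta_j$ are explicit $\mathbb{Z}$-linear combinations of $c_{19}, \ldots, c_{26}$; in particular $\beta_4 = 0$.

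A direct verification on each $i \in \{19, \ldots, 26\}$ shows that the same identity continues to hold there with $a'_{j,i}$ and $b'_{j,i}$ replacing $a_{j,i}$ and $b_{j,i}$. This produces the key $\mathbb{Z}^{26}$-identity
\[
\chi(C, D) \;=\; \sum_{j=1}^4 \alpha_j\, \chi(A'_j) + \sum_{j=1}^4 \beta_j\, \chi(B'_j).
\]
Since $(A'_1, \ldots, A'_4; B'_1, \ldots, B'_4)$ is a trading transform, $\sum_j \chi(A'_j) = \sum_j \chi(B'_j)$, so the representation is unique only modulo the shift $(\alpha, \beta) \mapsto (\alpha + \mu, \beta - \mu)$; in particular $\sum_j \alpha_j + \sum_j \beta_j$ is an invariant, and the two trading conditions force it to equal zero.

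What remains is to show that every such representation with $\chi(C, D) \in T^{26}$ must coincide with $\chi(S, T)$ for distinct $S, T$ among the eight sets $\{A'_1, \ldots, A'_4, B'_1, \ldots, B'_4\}$. The condition $c_i \in \{-1, 0, 1\}$ for $i \in [18]$ becomes $\alpha_{\{j_1, j_2\}} + \beta_{\{k_1, k_2\}} \in \{-1, 0, 1\}$, where $\{j_1, j_2\}$ and $\{k_1, k_2\}$ are the 2-element subsets of $[4]$ encoded by column $i$ of $M$. Because $M$ is a transversal of complementary 2-of-4 / 2-of-4 patterns and $\sum_j \alpha_j + \sum_j \beta_j = 0$, the bound propagates to the 18 complementary patterns too, giving the boundedness of $\alpha_A + \beta_B$ for all 36 pairs $(A, B)$ of 2-subsets of $[4]$. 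A case analysis on the values of $\alpha$ and $\beta$, normalized via the shift (say, so that $\alpha_4 = 0$), then identifies $(\alpha, \beta)$ as one of the finitely many elementary choices matching a difference $\chi(S) - \chi(T)$ among the eight sets.

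The main obstacle is this concluding combinatorial step. While each boundedness constraint is simple in isolation, verifying that the 36 constraints together pin $(\alpha, \beta)$ down (modulo shift) to exactly the vectors $\chi(S) - \chi(T)$ needs careful bookkeeping; the transversal property of $M$ and the trading-transform shift freedom are the decisive tools that prevent any ``non-elementary'' solution from surviving.
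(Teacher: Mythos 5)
Your reformulation is sound in its setup, and it is a genuinely different organization of the argument from the paper's. Where the paper splits the proof into two claims (Claim~1: $\chi(C_1,D_1)\in X\cup -X$ implies $\chi(C,D)\in X'\cup-X'$; Claim~2: ${\bf\alpha}\in T^{18}$ implies ${\bf\alpha}\in X\cup -X$), you pass directly to the dual variables $\alpha_j,\beta_j$ and prove a single clean $\mathbb{Z}^{26}$-identity
\[
\chi(C,D)=\sum_{j=1}^4\alpha_j\,\chi(A'_j)+\sum_{j=1}^4\beta_j\,\chi(B'_j).
\]
This is a nice reorganization, and I checked enough coordinates ($i=19,22,26$) to believe your ``direct verification'' that the identity also holds on $\{19,\dots,26\}$, using $\beta_4=0$ and the two trading conditions. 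Your use of generic $N,K$ (so that $\{w_1,\dots,w_{18},K,N\}$ is $\mathbb{Q}$-linearly independent) is a legitimate substitute for the paper's magnitude argument based on $K,N$ being very large. Your observation that the constraint $\alpha_A+\beta_B\in T$ for the 18 columns of $M$ propagates to the complementary patterns via $\sum\alpha_j+\sum\beta_j=0$ is correct and slick.

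However, the proposal has a genuine and significant gap, which you yourself flag as ``the main obstacle.'' The concluding step --- showing that the 36 boundedness constraints $\alpha_A+\beta_B\in\{-1,0,1\}$ together with the shift freedom pin $(\alpha,\beta)$ down to exactly the differences $\chi(S)-\chi(T)$ of two of the eight sets --- is not a minor bit of bookkeeping but is the essential content of the statement. The paper devotes all of Section~6 to precisely this: Lemma~\ref{scalmult} establishes linear independence of the relevant difference vectors, Lemma~\ref{scalarw} handles the integrality of the coefficients of ${\bf w}^+$, and Facts~\ref{three}--\ref{final fact} carry out a delicate case analysis to enumerate all admissible coefficient vectors. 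Without carrying out an analogous analysis, your proposal stops just short of proving the statement; you have reformulated the problem (arguably more transparently than the paper), but you have not solved it. What you would need to supply is the full case-by-case argument (or a slicker one) that no ``non-elementary'' integer solution $(\alpha,\beta)$, bounded in the sense you describe, survives --- something that requires real work and cannot be waved away as ``careful bookkeeping.''
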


\begin{proof}
Assume to the contrary that there are two sets $C, D \in 2^{[26]}$ that have equal weights with respect to the corresponding system of weights defining $\preceq'$  but $\chi(C,D) \notin X' \cup -X'$.
The sets $C$ and $D$  have to contain some of the elements from $[26]\setminus [18]$ since $w_1, \ldots , w_{18}$ are linearly independent. Thus $C = C_1 \cup C_2 \text{ and } D = D_1 \cup D_2$, where $C_1,D_1 \subseteq [18]$ and $C_2, D_2 \subseteq [26]\setminus [18]$ with $C_2$ and $D_2$ being nonempty. We have
\[
0=\chi(C,D)\cdot {\bf w}' = \chi(C_1, D_1)\cdot {\bf w} + \chi (C_2, D_2) \cdot {\bf w}^+,
\]
where ${\bf w}^+=(w_{19}, \ldots, w_{26})^T$. By~(\ref{urav}), we can express weights  $w_{19}, \ldots, w_{26}$ as linear combinations with integer coefficients of $N, K$ and $\row w{18}$ obtaining
\[
\chi (C_2, D_2)\cdot {\bf w}^+ =\left( \sum_{i=1}^4\gamma_i \chi(A_i) + \sum_{i=1}^4\gamma_{4+i} \chi(B_i)\right)\cdot {\bf w} + \beta_1 N + \beta_2 K,
\]
where $\gamma_i, \beta_j \in {\mathbb Z}$.

Clearly the expression in the  bracket on the right-hand-side is just a vector with integer entries. Let us denote it $\alpha$. Then
\begin{equation}
\label{introduction_of_alpha}
\chi (C_2, D_2)\cdot {\bf w}^+ = {\bf \alpha}\cdot {\bf w} + \beta_1 N + \beta_2 K,
\end{equation}
where  ${\bf \alpha} \in {\mathbb Z}^{18}$. We can now write $\chi(C,D)\cdot {\bf w}'$  in terms of ${\bf w}, K$ and $N$:
$$
0=\chi(C,D)\cdot {\bf w}' = (\chi(C_1, D_1) + {\bf \alpha})\cdot {\bf w} +  \beta_1 N + \beta_2 K.
$$
We recap that $K$ was chosen to be much greater then $\sum_{i\in [18]}w_i$ and $N$ is much greater then $K$. So if $\beta_1, \beta_2$ are different from zero then $|\beta_1 N +\beta_2 K |$ is a very big number, which cannot be canceled out by $(\chi(C_1, D_1) + {\bf \alpha})\cdot {\bf w}$.  Weights $w_1, \ldots, w_{18}$ are linearly independent, so for arbitrary ${\bf b} \in Z^{18}$ the dot product ${\bf b}\cdot {\bf w}$ can be zero if and only if ${\bf b}={\bf 0}$. Hence
$$
w(C)=w(D) \mbox{ iff } \chi(C_1, D_1) = -\alpha \text{ and } \beta_1=0, \beta_2=0.
$$
Taking into account that $\chi(C_1,D_1)$ is a vector from $T^{18}$, we get
\begin{equation}\label{uslovie24}
{\bf  \alpha} \notin T^{18} \Longrightarrow w(C) \neq w(D).
\end{equation}

We need the following two claims to finish the proof, their proofs are delegated to the next section.

\begin{claim}\label{from'tocons}
Suppose $\chi(C_1,D_1)$ belongs to $X \cup -X$. Then
$\chi(C,D)$ belongs to $X' \cup -X'$.
\end{claim}

\begin{claim}
\label{alpha}
If ${\bf \alpha} \in T^{18}$, then $\alpha$ belongs to $X\cup -X$.
\end{claim}

Now let us show how with the help of these two claims the proof of Proposition~\ref{conseq} can be completed. The sets $C$ and $D$ have the same weight and this can happen only if ${\bf \alpha}$ is a vector in $T^{18}$. By Claim~\ref{alpha} ${\bf \alpha} \in X \cup -X$. The characteristic vector $\chi(C_1,D_1)$ is equal to $-{\bf \alpha}$, hence $\chi(C_1,D_1) \in X \cup -X$. By Claim~\ref{from'tocons} we get $\chi(C,D) \in X' \cup -X' $, a contradiction.
\end{proof}

\begin{thm}
There exists a  linear qualitative probability order $\sqsubseteq$ on $[26]$ and $T\subset [26]$ such that the initial segment $\Delta(\sqsubseteq,T)$ is not a threshold complex.
\end{thm}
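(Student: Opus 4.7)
The plan is to assemble the pieces built up in Sections 4 and 5 and apply Theorem~\ref{constr} once to the representable but non-linear order $\preceq'$. First I would fix the data of the construction: the representable linear order $\preceq$ on $[18]$ with linearly independent weights $\row w{18}$, the matrix $M'$ in (\ref{matrix}), the eight sets $A'_1,\ldots,A'_4,B'_1,\ldots,B'_4$, and the weights $w_{19},\ldots,w_{26}$ satisfying (\ref{urav}) for some sufficiently large $N \gg K \gg 126$. Let $\preceq'$ be the representable qualitative probability order on $2^{[26]}$ defined by ${\bf w}'=(\row w{26})$. By construction, the eight sets $A'_i, B'_j$ all have $\preceq'$-weight exactly $N$, so all differences $\chi(C,D)$ between them lie in $S(\preceq') = C(\preceq')\cap H_{{\bf w}'}$.

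Next I would apply Theorem~\ref{constr} to $\preceq'$ with the set $X'$ defined above (consisting of those $\chi(C,D)$ for which $D$ precedes $C$ in the target chain (\ref{eightstrict})). The two hypotheses required are:
\begin{enumerate}
\item $X' \cap \{{\bf s},-{\bf s}\} \ne \emptyset$ for every ${\bf s}\in S(\preceq')$, equivalently $S(\preceq')=X'\cup -X'$; and
\item $X'$ is closed under the restricted sum $\oplus$.
\end{enumerate}
Condition (2) is exactly Corollary~\ref{closedX}, while condition (1) is exactly Proposition~\ref{conseq}. Thus Theorem~\ref{constr} produces a discrete cone $C(\sqsubseteq)=C(\preceq')\setminus Y'$, where $Y'=S(\preceq')\setminus X'$, and an associated qualitative probability order $\sqsubseteq$ on $2^{[26]}$. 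Because $X'$ contains exactly one vector from each antipodal pair in $S(\preceq')$, every previously tied pair is now strictly ordered; combined with the strict comparisons inherited from $\preceq'$, this yields a \emph{linear} qualitative probability order $\sqsubseteq$ satisfying the chain (\ref{eightstrict}).

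Finally I would take $T=B'_1$ and consider the initial segment $\Delta = \Delta(\sqsubseteq, B'_1)$. By (\ref{eightstrict}) we have $A'_i \sqsubset B'_1$ for every $i\in[4]$, so $A'_i \in \Delta$, while $B'_1 \sqsubseteq B'_j$ for every $j\in[4]$, so $B'_j \notin \Delta$. Since $(A'_1,A'_2,A'_3,A'_4;B'_1,B'_2,B'_3,B'_4)$ is a trading transform (Lemma~\ref{propertiesofM}(1) together with the extension in the bordered matrix $M'$), this is an explicit violation of $CC_4^*$ for $\Delta$. By Theorem~\ref{all_cc*}, $\Delta$ is not a threshold complex, which proves the theorem.

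The main obstacle, and the reason this is not purely formal, lies in verifying Proposition~\ref{conseq}: one must show that the only $\preceq'$-ties among subsets of $[26]$ are precisely the vectors in $\pm X'$. That verification reduces, via the decomposition $C=C_1\cup C_2$, $D=D_1\cup D_2$ with $C_2,D_2\subseteq\{19,\ldots,26\}$ and the substitutions (\ref{urav}), to Claims~\ref{from'tocons} and~\ref{alpha} which the paper addresses in the next section; the remaining step, namely assembling Corollary~\ref{closedX}, Proposition~\ref{conseq}, and Theorem~\ref{constr} into the statement above, is then the short argument sketched in the previous paragraphs.
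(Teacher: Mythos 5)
Your proof is correct and follows essentially the same approach as the paper: fix the weights and the eight tied sets, invoke Corollary~\ref{closedX} and Proposition~\ref{conseq} to verify the hypotheses of Theorem~\ref{constr}, break the ties to get $\sqsubseteq$ satisfying (\ref{eightstrict}), and observe that $\Delta(\sqsubseteq, B'_1)$ violates $CC_4^*$ and hence is not threshold by Theorem~\ref{all_cc*}. The only addition you make is to spell out explicitly why $A'_i\in\Delta$ and $B'_j\notin\Delta$, which the paper leaves implicit.
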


\begin{proof}
By Corollary~\ref{closedX} and Proposition~\ref{conseq} all conditions of Theorem~\ref{constr} are satisfied.  Therefore $C(\preceq') \setminus (-X')$ is a discrete cone $C(\sqsubseteq )$, where $\sqsubseteq$ is a almost representable linear qualitative probability order. By construction $A'_1 \sqsubset A'_2 \sqsubset A'_3 \sqsubset A'_4 \sqsubset B'_1 \sqsubset B'_2 \sqsubset B'_3 \sqsubset B'_4$ and thus $\Delta(\sqsubseteq, B'_1)$ is an initial segment, which is not a threshold complex.
\end{proof}

Note that we have a significant degree of freedom in constructing such an example. The matrix $M$ can be chosen in $2^{18}$ possible ways and we have not specified the linear qualitative probability order $\preceq$.
\section{Proofs of Claim~\ref{from'tocons} and Claim~\ref{alpha}}
Lets fix some notation first. Suppose ${\bf b} \in {\mathbb Z}^{k}$ and  ${\bf x}_i \in {\mathbb Z}^{n} $ for $i \in [k]$. Then we define the product
\[
{\bf b} \cdot ({\bf x}_1, \ldots, {\bf x}_k) = \sum_{i \in [k]} b_i {\bf x}_i.
\]
 It resembles the dot product (the difference is that the second argument is a sequence of vectors) and is denoted in the same way. For a sequence of vectors
$({\bf x}_1, \ldots, {\bf x}_k)$ we also define $({\bf x}_1, \ldots, {\bf x}_k)_p=({\bf x}_1^{(p)}, \ldots, {\bf x}_k^{(p)})$, where ${\bf x}_i^{(j)}$ is the $j$th coordinate of vector  ${\bf x}_i$.

We start with the following lemma.
\begin{lem}
\label{scalmult}
Let ${\bf b}\in \mathbb{Z}^6$. Then
\begin{equation*}
{\bf b}\cdot (\chi(B_1,A_4),\chi(B_2,A_1), \chi(B_3,A_2),\chi(A_2,A_1),\chi(A_3,A_1),\chi(A_4,A_1)) = {\bf 0}
\end{equation*}
 if and only if  ${\bf b}={\bf 0}$.
\end{lem}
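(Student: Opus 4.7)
The plan is to translate the lemma into a linear-algebra condition on the rows of the $8\times 18$ matrix $M$; the reverse direction of the ``if and only if'' is trivial, so the task is to show that if the displayed linear combination vanishes, then ${\bf b}={\bf 0}$. Fix ${\bf b}\in\mathbb{Z}^6$ and suppose the listed linear combination equals ${\bf 0}\in\mathbb{R}^{18}$. Each of the six vectors is a difference of two rows of $M$: for instance the $j$th coordinate of $\chi(B_i,A_k)$ is $M_{(4+i)j}-M_{kj}$, and of $\chi(A_i,A_k)$ is $M_{ij}-M_{kj}$. Regrouping by row index turns the assumption into $\sum_{i=1}^{8}c_iM_{ij}=0$ for every $j\in[18]$, where
\[
c=(-b_2-b_4-b_5-b_6,\,b_4-b_3,\,b_5,\,b_6-b_1,\,b_1,\,b_2,\,b_3,\,0).
\]
Two observations are immediate from this formula: $c_8=0$, and by direct cancellation $c_1+c_2+\cdots+c_8=0$.

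The second step is to upgrade ``$c\cdot v=0$ for every column $v$ of $M$'' to ``$c\cdot v=0$ for every $v\in U$''. This is where the design of $M$ enters: $M$ picks exactly one representative of each antipodal pair $\{v,\bar v\}$ in $U$, and $v+\bar v={\bf 1}$. Because $c\cdot{\bf 1}=\sum c_i=0$, we have $c\cdot\bar v=-c\cdot v=0$ whenever $c\cdot v=0$, so $c$ annihilates all $36$ elements of $U$.

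The third step is to identify the linear span of $U$. Every $v\in U$ satisfies $v_1+v_2+v_3+v_4=v_5+v_6+v_7+v_8$, and an elementary computation using a handful of explicit elements of $U$ shows that $U$ spans exactly this $7$-dimensional hyperplane. Its orthogonal complement is the line through $(1,1,1,1,-1,-1,-1,-1)$, so $c$ is a scalar multiple of this vector; the condition $c_8=0$ then forces $c={\bf 0}$. Reading off $b_1=c_5$, $b_2=c_6$, $b_3=c_7$, $b_5=c_3$, $b_6-b_1=c_4$, and $b_4-b_3=c_2$ then yields ${\bf b}={\bf 0}$.

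The main point I expect to require care is the identity $\sum c_i=0$: without it one could not propagate the constraint from the $18$ columns of $M$ to the full set $U$, and one would be stuck analysing the left kernel of the particular $M$ at hand. The fact that this cancellation does happen is essentially a numerical coincidence of the six specific vectors appearing in the lemma, and everything else in the argument is a short $8$-dimensional computation.
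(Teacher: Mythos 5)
Your proof is correct, and it takes a genuinely different route from the paper's. The paper argues column-by-column: it invokes the non-compatibility of various pairs $(B_i,A_j),(B_k,A_l)$ (a consequence of the design of $M$, i.e.\ the choice of one vector from each antipodal pair in $U$), reads off a $\pm(1,1,-1,1,1,0)$-type pattern at the corresponding column, and accumulates six linear equations in $b_1,\dots,b_6$ that are then checked to have only the zero solution. Your proof instead regroups the given combination by row index into $\sum_{i=1}^8 c_iM_{ij}=0$, observes the two structural identities $c_8=0$ and $\sum_i c_i=0$ (the latter automatic because every summand is a difference of two characteristic vectors), uses the antipodal-pair structure of $M$ together with $c\cdot\mathbf{1}=0$ to upgrade orthogonality to all of $U$, and then finishes by noting that $U$ spans the hyperplane $x_1+\cdots+x_4=x_5+\cdots+x_8$ whose normal $(1,1,1,1,-1,-1,-1,-1)$ has nonzero eighth coordinate. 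What your approach buys is conceptual clarity and robustness: it does not require exhibiting or checking any particular non-compatible pairs, nor solving a concrete $6\times 6$ linear system; the only facts used are that $M$ contains one representative from each antipodal pair and that $\mathrm{span}(U)$ is the stated $7$-dimensional hyperplane. The paper's approach is more hands-on and parallels the proofs of the neighbouring Facts, which also proceed by extracting scalar constraints from non-compatible pairs, so it fits the local style of the section even though it is less structural.
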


\begin{proof}
We know that the pairs $(B_1, A_4)$ and $(B_2, A_1)$ are not compatible. So there exists an element $p$ that lies in the intersection $B_1 \cap B_2$ (or $A_1 \cap A_4$), but $p \notin A_4 \cup A_1$ ($p \notin B_1 \cup B_2$, respectively). We have exactly two copies of every element among $A_1, \ldots, A_4$ and $B_1, \ldots, B_4$. Thus, the element $p$ belongs to $A_2 \cap A_3$ ($B_3 \cap B_4$) and doesn't belong to $B_3 \cup B_4$ ($A_2 \cup A_3$ ). The following table illustrates this:

\vspace{3mm}

\begin{tabular}{|c|c|c|c|c|c|c|c|c|} \hline
& $\chi(A_1)$ & $\chi (A_2)$ & $\chi(A_3)$ & $ \chi(A_4)$ & $\chi (B_1)$ & $\chi (B_2)$ & $\chi(B_3)$ & $\chi(B_4) $\\ \hline
$\text{$p$th} $ & 0 & 1& 1& 0 & 1& 1 & 0& 0\\
$\text{coordinate}  $& 1 & 0& 0& 1 & 0& 0 & 1 & 1\\ \hline
\end{tabular}

\vspace{3mm}

\noindent Then at $p$th position we have
\[
 (\chi(B_1,A_4),\chi(B_2,A_1), \chi(B_3,A_2),\chi(A_2,A_1),\chi(A_3,A_1),\chi(A_4,A_1))_p=\pm (1,1,-1,1,1,0)
\]
and hence
\[
b_1+b_2-b_3+b_4+b_5 = 0.
\]
From the fact that  other pairs are not compatible we can get more equations relating $\row b6$:
\[
\begin{array}{ccc}
b_1- b_2 + b_3 - b_4 - b_6 = 0& \text{from}& (B_1,A_4), (B_3,A_2);\\
-b_1+ b_2 + b_3 + b_5 + b_6 = 0 & \text{from}& (B_1,A_4), (B_4,A_3);\\
 b_2 + b_5 + b_6 = 0& \text{from} &(B_1,A_1), (B_2,A_2);\\
b_4 + b_6 = 0 & \text{from} &(B_1,A_1), (B_3,A_3);\\
 b_3 + b_5 + b_6 = 0&  \text{from}& (B_1,A_1), (B_3,A_2).
\end{array}
\]
The obtained system of linear equations has only the zero solution.
\end{proof}

\begin{lem}
\label{scalarw}
Let ${\bf a}=(\row a8)$ be a vector in ${\mathbb Z}^{8}$ whose every coordinate $a_i$ has absolute value which is at most $100$. Then  ${\bf a}\cdot {\bf w}^+ =0$  if and only if  ${\bf a}={\bf 0} $.
\end{lem}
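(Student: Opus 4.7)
The plan is to expand ${\bf a}\cdot {\bf w}^+$ using the explicit formulas in (\ref{urav}), and then use a hierarchy of scales ($N \gg K \gg$ everything else), the linear independence of $w_1,\ldots,w_{18}$ over $\mathbb{Z}$, and finally Lemma~\ref{scalmult} to force ${\bf a}={\bf 0}$.

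First I would substitute (\ref{urav}) into ${\bf a}\cdot {\bf w}^+$ and collect coefficients of $N$, $K$, and the weights. A routine computation gives
\[
{\bf a}\cdot {\bf w}^+ \;=\; \alpha N + \beta K + {\bf v}\cdot {\bf w},
\]
where $\alpha = \sum_{i=1}^{4} a_i$, $\beta = \sum_{i=5}^{8} a_i - \sum_{i=1}^{4} a_i$, and ${\bf v}\in\mathbb{Z}^{18}$ is the explicit integer combination of $\chi(A_1),\ldots,\chi(A_4),\chi(B_1),\chi(B_2),\chi(B_3)$ read off from (\ref{urav}). Since $|a_i|\le 100$, $|{\bf v}\cdot {\bf w}|$ is bounded by an absolute constant $C$, and the paper's construction takes $N$ and $K$ large enough (in particular $K>C$ and $N>800K+C$) that ${\bf a}\cdot {\bf w}^+=0$ forces in succession $\alpha=0$, then $\beta=0$, and hence ${\bf v}\cdot {\bf w}=0$. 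Because ${\bf v}$ has integer entries and $w_1,\ldots,w_{18}$ are linearly independent over $\mathbb{Z}$, this last equality gives ${\bf v}={\bf 0}$ as a vector of $\mathbb{R}^{18}$.

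Next I would rewrite ${\bf v}$ in the basis used in Lemma~\ref{scalmult}. Using the identities
\[
\chi(A_j)=\mathbf{u}_{j+2}+\chi(A_1)\ (j=2,3,4),\quad \chi(B_1)=\mathbf{u}_1+\mathbf{u}_6+\chi(A_1),
\]
\[
\chi(B_2)=\mathbf{u}_2+\chi(A_1),\quad \chi(B_3)=\mathbf{u}_3+\mathbf{u}_4+\chi(A_1),
\]
where $\mathbf{u}_1,\ldots,\mathbf{u}_6$ are the six vectors of Lemma~\ref{scalmult}, a direct bookkeeping computation shows that the coefficient of $\chi(A_1)$ in the resulting expansion of ${\bf v}$ equals exactly $-\alpha$. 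Since $\alpha=0$, the vector ${\bf v}$ is in fact a $\mathbb{Z}$-linear combination of $\mathbf{u}_1,\ldots,\mathbf{u}_6$ alone, so Lemma~\ref{scalmult} applies and all six of these coefficients vanish.

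Reading off the six resulting equations one gets immediately $c_3=-a_3=0$, then $d_3=a_3-a_7=0$ so $a_7=0$, then $c_2+d_3=-a_2=0$, then $d_2=-a_6=0$, then $c_4+d_1=-a_4=0$, and finally $d_1=a_1-a_5=0$. Combining with $\alpha=\sum_{i=1}^4 a_i=0$ yields $a_1=0$ and hence $a_5=0$, and combining with $\beta=0$ (equivalently $\sum_{i=5}^8 a_i=0$) yields $a_8=0$. Therefore ${\bf a}={\bf 0}$, as required. The main technical obstacle is the bookkeeping in the third paragraph---one must verify that the coefficient of $\chi(A_1)$ comes out to $-\alpha$ rather than something more complicated, since this is exactly what allows the reduction to the already-proven Lemma~\ref{scalmult} and bypasses the question of whether $\chi(A_1)$ itself lies in the span of $\mathbf{u}_1,\ldots,\mathbf{u}_6$.
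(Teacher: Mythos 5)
Your proof is correct and follows essentially the same strategy as the paper's: expand ${\bf a}\cdot{\bf w}^+$ via the explicit weight formulas, use the separation of scales $N \gg K \gg |{\bf v}\cdot{\bf w}|$ together with $\mathbb{Z}$-linear independence of $w_1,\ldots,w_{18}$ to force $\alpha=\beta=0$ and ${\bf v}={\bf 0}$, and then reduce to Lemma~\ref{scalmult}. The only difference is cosmetic bookkeeping: the paper eliminates $\chi(A_1)$ from the bracket by substituting $a_1=-a_2-a_3-a_4$ into $\sum_i a_i\chi(A_i)$, while you compute the $\chi(A_1)$-coefficient directly and observe it equals $-\alpha$, which vanishes; these are the same observation expressed in two ways, and your final read-off of the six vanishing coefficients matches what the paper obtains.
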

\begin{proof}
We first rewrite~(\ref{urav})  in more convenient form:
\begin{equation}\label{newurav} \begin{split}
w_{19}&  =  N - K -(-\chi(B_1,A_4)+  \chi(A_1))\cdot {\bf w} \\
w_{20} & =  N - K -(-\chi(B_1,A_4) -\chi(B_2,A_1)+ \chi(A_2) )\cdot {\bf w} \\
w_{21}&  =  N - K -(-\chi(B_1,A_4) -\chi(B_2,A_1) - \chi(B_3,A_2)+  \chi(A_3))\cdot {\bf w} \\
w_{22}&  =  N - K -\chi(A_4) \cdot {\bf w} \\
w_{23}&  =  K - \chi(B_1,A_4)  \cdot {\bf w}\\
w_{24}&  =  K -( \chi(B_1,A_4) + \chi(B_2,A_1) ) \cdot {\bf w} \\
w_{25}&  =  K -(\chi(B_1,A_4)   +\chi(B_2,A_1) + \chi(B_3,A_2) )\cdot {\bf w}\\
w_{26}& =  K
\end{split} \end{equation}

We calculate the dot product  ${\bf a}\cdot {\bf w}^+$  substituting the values of $w_{19}, \ldots, w_{26}$ from~(\ref{newurav}):
\begin{equation}
\label{wsystem}
\begin{split}
0={\bf a}\cdot {\bf w}^+ &=N\sum_{i \in [4]}a_i - K \left(\sum_{i \in [4]}a_i - \sum_{i \in [4] }a_{4+i}\right)  \\
&- \Bigl[  \chi(B_1,A_4)\left(\sum_{i =5}^{7}a_i  - \sum_{i =1}^{3}a_i \right) +\chi(B_2,A_1)\left(\sum_{i =6}^{7}a_i  - \sum_{i =2}^{3}a_i\right) \\
&+ \chi(B_3,A_2)(-a_3 +a_7)+ \sum_{i\in [4]}a_i\chi(A_i)   \Bigr] \cdot {\bf w}.
\end{split}
\end{equation}

The numbers $N$ and $K$ are very big and $\sum_{i \in [18]}w_i$ is small. Also $|a_i|\le 100$. Hence the three summands cannot cancel each other. Therefore  $\sum_{i \in [4]}a_i = 0$ and $\sum_{i \in [4]}a_{4+i} = 0$. The expression in the square brackets should be zero because the coordinates of ${\bf w}$ are linearly independent.


We know that $a_1 = -a_2-a_3-a_4$, so the expression in the square brackets in~(\ref{wsystem}) can be rewritten in the following form:
\begin{equation}
\label{fform}
\begin{split}
b_1\chi(B_1,A_4) +b_2 \chi(B_2,A_1)+b_3 \chi(B_3,A_2)+\\ a_2\chi(A_2,A_1)+a_3\chi(A_3,A_1)+a_4\chi(A_4,A_1),
\end{split}
\end{equation}
where $b_1=\sum_{i =5}^{7}a_i  - \sum_{i =1}^{3}a_i,$ $b_2=\sum_{i =6}^{7}a_i  - \sum_{i =2}^{3}a_i$ and $b_3 = a_7-a_3.$

By Lemma~\ref{scalmult} we can see that expression~(\ref{fform})  is zero iff $b_1=0,$ $b_2=0, b_3=0$ and $ a_2=  0, a_3=0, a_4=0$ and this happens iff ${\bf a} = {\bf 0}$.
\end{proof}

\begin{proof}[Proof of Claim 1]
Assume to the contrary that $\chi(C_1,D_1) \in X \cup -X$ and $\chi(C,D)$ does not belong to $X' \cup -X'$. Consider $\chi(\breve{C_1}, \breve{D_1}) \in X' \cup -X'$. We know that the weight of $C$ is the same as the weight of  $D$, and also that the weight of $\breve{C}_1$ is the same as the weight of $\breve{D}_1$.  This can be written as
\begin{align*}
&\chi(C_1,D_1)\cdot {\bf w} + \chi(C_2,D_2) \cdot {\bf w}^+ = 0, \\
&\chi(C_1,D_1)\cdot {\bf w} + \chi(\breve{C_1}\setminus C_1, \breve{D_1}\setminus D_1 )\cdot {\bf w}^+ = 0.
\end{align*}
We can now see that \[(\chi(\breve{C_1}\setminus C_1, \breve{D_1}\setminus D_1) - \chi(C_2, D_2))\cdot {\bf w}^+ = 0.\]
The left-hand-side of the last equation is a linear combination of weights $w_{19}, \ldots, w_{26}$.
Due to Lemma~\ref{scalarw} we conclude from here that
\[
\chi(\breve{C_1}\setminus C_1,\breve{D_1}\setminus D_1 ) - \chi(C_2,D_2) = \bf 0.
\]
But this is equivalent to  $\chi (C,D) = \chi(\breve{C_1},\breve{D_1}) \in X$, which is a contradiction.
\end{proof}


\begin{proof}[Proof of Claim~\ref{alpha}]
We remind the reader that $\bf\alpha$ was defined in (\ref{introduction_of_alpha}). Sets $C$ and $D$ has the same weight and we established that $\beta_1=\beta_2=0$. So
\begin{equation*}
\label{alpha_equation}
\chi(C_2,D_2) \cdot {\bf w}^+ = {\bf  \alpha}\cdot {\bf w}.
\end{equation*}
If we look at the representation of the last eight weights in~(\ref{newurav}), we note that the weights $w_{19}$, $w_{20}$, $w_{21}$, $w_{22}$ are much heavier than the weights
$w_{23}$, $w_{24}$, $w_{25}$, $w_{26}$. Hence $w(C)=w(D)$ implies
\begin{equation}
\label{equal_heavy}
\begin{split}
|C_2 \cap \{19, 20,21,22\}| = |D_2 \cap \{19, 20,21,22\}| & \text{ and }\\
|C_2 \cap \{23,24,25,26\}| = |D_2 \cap \{23, 24,25,26\}|. &
\end{split}
\end{equation}
That is $C$ and $D$ have equal number of super-heavy weights and equal number of heavy ones.

Without loss of generality we can assume that $C_2 \cap D_2$ is empty.
Similar to derivation in the proof of Lemma~\ref{scalarw}, the vector $\bf \alpha$ can be expressed as
\begin{equation}
\label{alpha-repr-temp}
\alpha = a_1\chi(B_1,A_4) +a_2 \chi(B_2,A_1) + a_3 \chi(B_3,A_2) + \sum_{i\in [4]}b_i \chi(A_i)
\end{equation}
for some $a_i, b_j \in {\mathbb Z}$.
The characteristic vectors $\chi(A_1), \ldots, \chi(A_4)$ participate in the representations of super-heavy elements $w_{19}, \ldots, w_{22}$ only. Hence $b_i=1$ iff element $18+i\in C_2$ and $b_i=-1$ iff element $18+i\in D_2$. Without loss of generality we can assume that $C_2 \cap D_2 = \emptyset$. By~(\ref{equal_heavy}) we can see that  if $C_2$ contains some super-heavy element $p \in \{19, \ldots, 22\}$ with $\chi(A_k)$, $ k \in [4]$, in the representation of  $w_p$, then $D_2$ has a super-heavy $q \in \{19, \ldots, 22\}$, $q \neq p$ with $\chi(A_t), t \in [4] \setminus \{k\}$ in representation of  $w_q$. In such case $b_k= -b_t =1$ and
\[
b_k \chi(A_k)+ b_t \chi(A_t) = \chi(A_k,A_t).
\]
By~(\ref{equal_heavy}) the number of super-heavy element in $C_2$ is the same as the number of super-heavy  elements in $D_2$. Therefore~(\ref{alpha-repr-temp}) can be rewritten in the following way:
\begin{equation}\label{alpha-repr}
\alpha = a_1\chi(B_1,A_4) +a_2 \chi(B_2,A_1) + a_3 \chi(B_3,A_2) +\\
 a_4\chi(A_i,A_p) + a_5\chi(A_k,A_t),
\end{equation}
where $a_1, a_2, a_3 \in {\mathbb Z}$; $a_4,a_5 \in \{0,1\}$ and $\{i,k,t,p\} = [4]$.\par\smallskip

Now the series of technical facts will finish the proof.

\begin{fact}
\label{three}
Suppose ${\bf a}=(a_1,a_2,a_3) \in \mathbb{Z}^3$ and $|\{i, k, t\}|=|\{j, m, s\}|=3$. Then
\[
a_1 \chi(B_j,A_i) + a_2\chi( B_m,A_k) + a_3\chi( B_s,A_t) \in T^{18}
\]
if and only if
\begin{equation}
\label{3-list}
{\bf a} \in \{(0,0,0),\ (\pm1,0,0),\ (0,\pm1,0),\ (0,0,\pm1),\ (1,1,1),\  (-1,-1,-1)\}.
\end{equation}
\end{fact}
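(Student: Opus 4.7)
The plan is to reduce the condition that the combination lies in $T^{18}$ to a finite system of linear inequalities in $(a_1,a_2,a_3)\in\mathbb{Z}^3$, and then enumerate its integer solutions.

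\emph{Step 1: coordinate-wise inequalities.} For each element $p\in[18]$ define the signature $(I_A,I_B)$ by $I_A=\{r:p\in A_r\}$ and $I_B=\{r:p\in B_r\}$; by construction of $M$, both sets have size $2$. The $p$-th coordinate of the combination equals
\begin{equation*}
    v_p=a_1\bigl([j\in I_B]-[i\in I_A]\bigr)+a_2\bigl([m\in I_B]-[k\in I_A]\bigr)+a_3\bigl([s\in I_B]-[t\in I_A]\bigr),
\end{equation*}
and the combination lies in $T^{18}$ if and only if $|v_p|\le 1$ for every $p$. Since the inequality $|v_p|\le 1$ is unchanged under complementation of the signature, and $M$ contains exactly one representative from each of the $18$ complementary pairs of $U$, this is equivalent to requiring $|v_p|\le 1$ for every one of the $36$ possible signatures.

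\emph{Step 2: listing the distinct patterns.} I would next enumerate the $6\times 6=36$ signatures and record the coefficient vectors $(c_1,c_2,c_3)\in\{-1,0,1\}^3$ of $(a_1,a_2,a_3)$ in $v_p$. Organising the enumeration by the pair $(|I_A\cap\{i,k,t\}|,\,|I_B\cap\{j,m,s\}|)$ shows that, up to sign, the nonzero patterns are exactly $\mathbf{e}_r$, $\mathbf{e}_r-\mathbf{e}_s$, and $\mathbf{e}_r+\mathbf{e}_s-\mathbf{e}_l$ for $\{r,s,l\}=\{1,2,3\}$. The key observation is that the pattern $\mathbf{e}_1+\mathbf{e}_2+\mathbf{e}_3$ does \emph{not} occur: producing it would force $\{j,m,s\}\subseteq I_B$, contradicting $|I_B|=2$. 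The resulting nine non-trivial constraints are
\begin{equation*}
    |a_r|\le 1,\quad |a_r-a_s|\le 1,\quad |a_1+a_2-a_3|\le 1,\quad |a_1-a_2+a_3|\le 1,\quad |{-a_1}+a_2+a_3|\le 1.
\end{equation*}

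\emph{Step 3: integer enumeration.} The bounds $|a_r|\le 1$ leave $27$ integer candidates. A short case analysis against the remaining inequalities eliminates all but the listed nine. Any vector with exactly two nonzero entries of opposite signs fails a difference constraint; any vector with exactly two nonzero entries of the same sign fails one of the three mixed-sign constraints (for instance $(1,1,0)$ fails $|a_1+a_2-a_3|\le 1$); and among the eight vectors in $\{-1,1\}^3$, only $(1,1,1)$ and $(-1,-1,-1)$ survive, since the other six violate some $|a_r-a_s|\le 1$.

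The main obstacle is Step~2, the exhaustive case check of the $36$ signatures. The crucial feature that makes the statement true (and non-trivial) is the \emph{absence} of the pattern $\mathbf{e}_1+\mathbf{e}_2+\mathbf{e}_3$ among the columns, which is precisely what allows $(1,1,1)$ and $(-1,-1,-1)$ to remain valid solutions.
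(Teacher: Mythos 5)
Your proof is correct and follows the same basic strategy as the paper's---extract coordinate-wise bounds on $(a_1,a_2,a_3)$ from the requirement that the combination lie in $T^{18}$, then enumerate the integer solutions---but carries out the constraint extraction differently. The paper invokes the non-compatibility of the three pairs $((B_j,A_i),(B_m,A_k))$, $((B_j,A_i),(B_s,A_t))$, $((B_m,A_k),(B_s,A_t))$ established via the construction of $M$, reads off only the three inequalities $(a_1+a_2-a_3),\ (a_1-a_2+a_3),\ (-a_1+a_2+a_3)\in T$, and derives $|a_r|\le 1$ algebraically by pairwise summation; you instead enumerate all $36$ coordinate signatures. (Your extra inequalities $|a_r-a_s|\le 1$ are in fact consequences of the paper's three, e.g.\ $2(a_1-a_2)=(a_1-a_2+a_3)-(-a_1+a_2+a_3)$, so the two systems are equivalent.) Your more exhaustive enumeration buys something the paper leaves unargued: an explicit proof of the ``if'' direction of the equivalence, since the observed \emph{absence} of the pattern $\mathbf{e}_1+\mathbf{e}_2+\mathbf{e}_3$ among the coefficient vectors is exactly why $(1,1,1)$ and $(-1,-1,-1)$ yield combinations in $T^{18}$; the paper's proof, as written, only establishes that the nine listed vectors are the only candidates, and the converse inclusion could alternatively be checked via the trading-transform identity $\chi(B_j,A_i)+\chi(B_m,A_k)+\chi(B_s,A_t)=-\chi(B_\ell,A_q)$ for the missing indices $\ell,q$.
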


\begin{proof}
The pairs $((B_j,A_i), (B_m,A_k))$, $((B_j,A_i), (B_s,A_t))$ and $((B_m,A_k), (B_s,A_t))$  are not compatible. Using the same technique as in the proofs of Proposition~\ref{closedZ} and Lemma~\ref{scalmult} and watching a particular coordinate we get
\[
(a_1 + a_2-a_3),\  (a_1 - a_2+a_3),\  (-a_1 + a_2+a_3) \in T,
\]
respectively.
The absolute value of the sum of every two of these terms is at most two. Add the first term to the third. Then $|2a_2| \le 2$ or, equivalently,  $|a_2|\le 1$. In a similar way we can show that $|a_3|\le 1$ and $|a_1|\le 1$. The only vectors that satisfy all the conditions above are those listed in (\ref{3-list}).
\end{proof}

\begin{fact}
\label{three+}
Suppose ${\bf a}=(a_1,a_2,a_3) \in \mathbb{Z}^3$ and $|\{i, k, t\}|=|\{j, m, s\}|=3$. Then
\[
a_1 \chi(B_j,A_i) + a_2\chi( B_m,A_k) + a_3\chi( B_s,A_t)+ \chi(A_k,A_t) \in T^{18}
\]
if and only if
\begin{equation}
\label{4-list}
{\bf a} \in \{(0,0,0),\  (0,1,0),\  (0,0,-1),\ (0,1, -1)\}.
\end{equation}
\end{fact}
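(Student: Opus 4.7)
My plan splits the biconditional into two directions.

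\textbf{Sufficiency.} For each of the four listed triples I expand the combination and observe that it telescopes to a single $\chi(X,Y)$ for subsets $X,Y$ of $[18]$, which automatically lies in $T^{18}$. Concretely, $\mathbf{a}=(0,0,0)$ yields $\chi(A_k,A_t)$; $\mathbf{a}=(0,1,0)$ yields $\chi(B_m,A_k)+\chi(A_k,A_t)=\chi(B_m,A_t)$; $\mathbf{a}=(0,0,-1)$ yields $-\chi(B_s,A_t)+\chi(A_k,A_t)=\chi(A_k)-\chi(B_s)=\chi(A_k,B_s)$; and $\mathbf{a}=(0,1,-1)$ yields $\chi(B_m)-\chi(B_s)=\chi(B_m,B_s)$.

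\textbf{Necessity.} Let $u$ and $r$ be the elements of $[4]$ missing from $\{i,k,t\}$ and $\{j,m,s\}$ respectively. Expanding gives
\[
v:=a_1\chi(B_j,A_i)+a_2\chi(B_m,A_k)+a_3\chi(B_s,A_t)+\chi(A_k,A_t)=\sum_{h\in[4]}\gamma_h\chi(A_h)+\sum_{h\in[4]}\delta_h\chi(B_h),
\]
where $\gamma_i=-a_1,\gamma_k=1-a_2,\gamma_t=-1-a_3,\gamma_u=0$ and $\delta_j=a_1,\delta_m=a_2,\delta_s=a_3,\delta_r=0$. For $p\in[18]$ set $S_A(p)=\{h:p\in A_h\}$ and $S_B(p)=\{h:p\in B_h\}$; by construction of $M$ both are $2$-subsets of $[4]$, and $v_p=\sum_{h\in S_A(p)}\gamma_h+\sum_{h\in S_B(p)}\delta_h$. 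As $p$ ranges over $[18]$ the pair $(S_A(p),S_B(p))$ realizes exactly one representative of each of the $18$ complement pairs among the $\binom{4}{2}^{2}=36$ possible configurations. Since $\sum_h\gamma_h=-(a_1+a_2+a_3)=-\sum_h\delta_h$, replacing a column by its complement negates $v_p$, so $v\in T^{18}$ is equivalent to $|v_p|\le 1$ at every one of the $36$ configurations.

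I isolate each coordinate by evaluating at judicious configurations. The choice $(S_A,S_B)=(\{i,k\},\{m,r\})$ gives $v_p=1-a_1$ and $(\{k,u\},\{j,m\})$ gives $v_p=1+a_1$, forcing $a_1=0$. The choice $(\{i,k\},\{j,r\})$ gives $v_p=1-a_2$ and $(\{i,u\},\{j,m\})$ gives $v_p=a_2$, forcing $a_2\in\{0,1\}$. The choice $(\{i,t\},\{j,r\})$ gives $v_p=-1-a_3$ and $(\{i,u\},\{j,s\})$ gives $v_p=a_3$, forcing $a_3\in\{-1,0\}$. The four surviving triples are exactly those in~(\ref{4-list}), and the sufficiency direction confirms that each produces a vector in $T^{18}$.

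The main obstacle is the bookkeeping over the $36$ configurations; the column/complement involution halves the casework, and the uniform formula for $v_p$ in terms of $(S_A(p),S_B(p))$ makes it straightforward to spot configurations that cleanly isolate each coordinate.
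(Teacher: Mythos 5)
Your proof is correct. The mathematics matches the paper's conclusion, and both proofs rest on the same underlying mechanism: every $2$-subset pair $(S_A,S_B)\subseteq[4]\times[4]$ is realized, up to complementation, as a column of $M$, so each such configuration yields a constraint $v_p\in T$. The paper phrases this via a list of five specific non-compatible pairs and then adds selected pairs of the resulting linear constraints to bound $a_2$ and $a_3$; you instead rewrite $v$ as $\sum_h\gamma_h\chi(A_h)+\sum_h\delta_h\chi(B_h)$ and pick configurations that isolate each $a_i$ directly (your constraints $1\pm a_1$, $1-a_2$ and $a_2$, $-1-a_3$ and $a_3$), so no algebraic combination of constraints is needed. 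Your coefficients $\gamma_i=-a_1,\ \gamma_k=1-a_2,\ \gamma_t=-1-a_3,\ \gamma_u=0,\ \delta_j=a_1,\ \delta_m=a_2,\ \delta_s=a_3,\ \delta_r=0$ are right, the six coordinate evaluations all check out, and the complement-negation observation (from $\sum\gamma_h=-\sum\delta_h$) correctly justifies treating all $36$ configurations as available. One thing you do that the paper leaves implicit is to verify sufficiency by telescoping each of the four candidate triples to a single $\chi(X,Y)$; the paper only argues necessity and tacitly relies on the reader to see that the four survivors work. Your version is cleaner and slightly more complete, though conceptually it is the same argument.
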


\begin{proof}
Considering non-compatible pairs $((B_m,A_k), (B_s,A_t))$, $((B_j,A_i), (B_m,A_k))$, $((B_j,A_i), (B_{s},A_t))$, $((B_j,A_k), (B_{s},A_i))$, $((B_j,A_t), (B_{m},A_i))$,
we get the inclusions
\begin{equation*}
(-a_1 +a_2+ a_3),\  (a_1+a_2 - a_3 -1),\ (a_1-a_2+ a_3+1),\ (a_1-1),\ (a_1+1) \in T,
\end{equation*}
respectively.
We can see that $|2a_2-1| \le 2$ and $|2a_3+1| \le 2$ and $a_1=0$. So $a_2$ can be only $0$ or $1$ and  $a_3$ can have values $-1$ or $ 0$.
\end{proof}

\begin{fact}
\label{three_and_oneint}
Suppose ${\bf a}=(a_1,a_2,a_3) \in \mathbb{Z}^3$ and $\{i, k, t, p\} = [4]$ and $|\{j, m, s\}|=3$. Then
\[
a_1 \chi(B_j,A_i) + a_2\chi( B_m,A_k) + a_3\chi( B_s,A_t) + \chi(A_i,A_p) \in T^{18}
\]
 if and only if
 \[
 a \in \{(0,0,0),\ (1,0,0),\ (1,1,1),\ (2,1,1)\}.
 \]
\end{fact}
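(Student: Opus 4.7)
The plan is to follow the template of Facts~\ref{three} and~\ref{three+}: extract enough linear inequalities on $(a_1,a_2,a_3)$ from the requirement that
\[
\mathbf{v}:=a_1\chi(B_j,A_i)+a_2\chi(B_m,A_k)+a_3\chi(B_s,A_t)+\chi(A_i,A_p)
\]
lie in $T^{18}$ to narrow the feasible set to a short list of candidates, and then verify each surviving triple.

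Each coordinate $x\in[18]$ corresponds to a column of $M$, i.e., to a pattern $(P_A,P_B)\in U$ specifying which $A_v$ and which $B_u$ contain $x$; since $M$ picks exactly one representative from each involution pair of $U$, every such pattern is realized (up to complementation) by some coordinate, and at that coordinate each of the four summands of $\mathbf{v}$ takes an explicit $0/\pm1$ value. The three non-compatibilities of Lemma~\ref{propertiesofM} already exploited in the proof of Fact~\ref{three+}, namely those of $((B_m,A_k),(B_s,A_t))$, $((B_j,A_i),(B_m,A_k))$ and $((B_j,A_i),(B_s,A_t))$, immediately yield
\[
|{-a_1+a_2+a_3}|\le 1,\quad |a_1+a_2-a_3-1|\le 1,\quad |a_1-a_2+a_3-1|\le 1.
\]
These three do not involve the extra $A$-index $p$ nor the extra $B$-index $r=[4]\setminus\{j,m,s\}$ and do not suffice. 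I would then sweep through further patterns that do involve $p$ and/or $r$; choices such as $(P_A,P_B)=(\{k,t\},\{j,m\})$, $(\{i,k\},\{j,r\})$, $(\{i,t\},\{j,r\})$, $(\{k,p\},\{m,r\})$, $(\{t,p\},\{m,r\})$ and their involution mates produce
\[
a_1-a_3,\ a_1-a_2,\ 1-a_2,\ 1-a_3,\ 1+a_2-a_1,\ 1+a_3-a_1,\ a_2-1,\ a_3-1\in T.
\]
Combining both batches forces $a_2=a_3\in\{0,1\}$, $a_1\in\{0,1,2\}$, and $0\le a_1-a_2\le 1$, leaving precisely the four triples $(0,0,0),(1,0,0),(1,1,1),(2,1,1)$.

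Sufficiency is then an elementary check. For $(0,0,0)$ and $(1,0,0)$, the vector $\mathbf{v}$ reduces to $\chi(A_i,A_p)$ and $\chi(B_j,A_p)$, both manifestly in $T^{18}$. For $(1,1,1)$ one has $\mathbf{v}=\chi(B_j)+\chi(B_m)+\chi(B_s)-\chi(A_k)-\chi(A_t)-\chi(A_p)$, and at each coordinate both the positive and the negative block contribute either $1$ or $2$, because every element of $[18]$ lies in exactly two of the four $B_u$ and exactly two of the four $A_v$; hence the difference lies in $\{-1,0,1\}$. A parallel counting argument, rewriting $\mathbf{v}=2\chi(B_j)+\chi(B_m)+\chi(B_s)-\chi(A_i)-\chi(A_k)-\chi(A_t)-\chi(A_p)$, handles $(2,1,1)$.

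The main technical obstacle is combinatorial bookkeeping of witness coordinates: several intermediate triples, notably $(2,0,0)$ and $(0,1,1)$, satisfy the three ``basic'' inequalities, so the proof depends on judiciously choosing enough additional patterns---necessarily involving $p$ or $r$---to eliminate them while still admitting the four true solutions.
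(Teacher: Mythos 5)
The necessity half of your argument has a genuine gap. The triples $(1,1,0)$ and $(1,0,1)$ satisfy every one of your listed inclusions --- both the three ``basic'' ones $(-a_1+a_2+a_3),\,(a_1+a_2-a_3-1),\,(a_1-a_2+a_3-1)\in T$ and all six distinct ``extra'' ones $(a_1-a_2),\,(a_1-a_3),\,(a_2-1),\,(a_3-1),\,(1+a_2-a_1),\,(1+a_3-a_1)\in T$ --- yet they are not in the target list. (For instance with $(1,1,0)$: the basics evaluate to $0,1,-1$; the extras to $0,1,0,-1,1,0$.) So your asserted conclusion ``combining both batches forces $a_2=a_3$'' is not justified by the constraints you produced, and the feasible set you have actually cut out is strictly larger than the correct one. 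You correctly flagged $(2,0,0)$ and $(0,1,1)$ as needing elimination, but missed that $(1,1,0)$ and $(1,0,1)$ are the genuinely troublesome survivors.

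To close the gap you need a constraint that separates $a_2$ from $a_3$. Two further patterns do the job: a coordinate lying in $A_i\cap A_k$ and $B_j\cap B_s$ contributes $-a_2+a_3+1$, and one lying in $A_i\cap A_t$ and $B_j\cap B_m$ contributes $a_2-a_3+1$; requiring both in $T$ forces $a_2=a_3$, which together with your other inclusions yields exactly the four triples. The paper instead uses the seven inclusions $(a_1+a_2-a_3-1),(a_1-a_2+a_3-1),(-a_1+a_2+a_3),(a_1-1),(a_1-a_3),(a_1-a_2),(a_2-a_3+1)\in T$, which still leave $(1,0,1)$ alive, and disposes of that residual case not by another inequality but by rewriting $\chi(B_j,A_i)+\chi(A_i,A_p)=\chi(B_j,A_p)$ and invoking Fact~\ref{three}. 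Either route works; yours is closer to a purely linear argument but needs the two missing $U$-patterns. One small further note on sufficiency: for $(2,1,1)$ the positive block $2\chi(B_j)+\chi(B_m)+\chi(B_s)$ takes values in $\{1,2,3\}$ (not $\{1,2\}$) while $\chi(A_i)+\chi(A_k)+\chi(A_t)+\chi(A_p)$ is identically $2$, so the difference still lands in $T$; the claim is right but the bookkeeping as stated is off.
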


\begin{proof}
 Let $\ell\in [4]\setminus  \{j, m, s\}$. From consideration of the following non-compatible pairs
\begin{align*}
& ((B_j,A_i),(B_m,A_k)),\  ((B_j,A_i), (B_s,A_t)),\  ((B_m,A_k), (B_{s},A_t)),\  ((B_j,A_i), (B_{m},A_t)), \\
& ((B_j,A_i), (B_{m},A_p)),\  ((B_j,A_i), (B_{s},A_p)),\  ((B_s,A_t), (B_{\ell},A_i))
\end{align*}
we get the following inclusions
\begin{align*}
& (a_1 +a_2- a_3-1),\  (a_1-a_2 + a_3 -1),\
(-a_1+a_2+ a_3),\\
& (a_1-1),\
(a_1-a_3),\
(a_1-a_2),\ (a_2-a_3+1)\in T,
\end{align*}
respectively.
So we have $|2a_3-1| \le 2$ (from  the second and the third inclusions) and $|2a_2-1| \le 2$ (from  the first and the third inclusions) from which we immediately get $a_2, a_3 \in \{1,0\}$. We also get $a_1 \in \{2,1,0\}$ (by the forth inclusion).
\begin{itemize}
  \item If $a_1=2$, then by the fifth and sixth inclusions $a_3=1$ and $a_2=1$.
  \item If $a_1 = 1$, then $a_2$ can be either zero or one.  If $a_2=0$ then we have $\chi(B_j,A_i)+ a_3\chi( B_s,A_t) + \chi(A_i,A_p) = \chi(B_j,A_p)+ a_3\chi( B_s,A_t)$. By Fact~\ref{three}, $a_3$ can be zero only. On the other hand, if $a_2=1$, then $a_3=1$ by the seventh inclusion.
  \item If $a_1=0$ then $a_2$ can be a $0$ or a $1$. Suppose $a_2=0$. Then $a_3=0$ by the first two inclusions.  Assume $a_2=1$. Then $a_3=0$  by the third inclusion and on the other hand $a_3=1$ by the second inclusion, a contradiction.
\end{itemize}
This proves the statement.
\end{proof}

\begin{fact}
\label{three_and_two}
Suppose ${\bf a}=(a_1,a_2,a_3) \in \mathbb{Z}^3$ and $\{i, k, t, p\} = [4]$ and $|\{j, m, s\}|=3$.
Then
\[
a_1 \chi(B_j,A_i) + a_2\chi( B_m,A_k) + a_3\chi( B_s,A_t) + \chi(A_i,A_p) + \chi(A_k,A_t)\notin T^{18}.
\]
\end{fact}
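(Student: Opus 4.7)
The plan is to dispense with case analysis on $(a_1,a_2,a_3)$ entirely by exhibiting a single coordinate of $[18]$ at which the expression attains the value $\pm 2$ independently of $a_1, a_2, a_3$. This shortcut is available here — but not in Facts~\ref{three}, \ref{three+}, \ref{three_and_oneint} — precisely because the new inhomogeneous term is the sum of \emph{two} $A$--$A$ differences, which can be arranged to reinforce each other at a well-chosen column of $M$.

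Concretely, I want a coordinate $q$ at which each of $\chi(B_j,A_i)_q$, $\chi(B_m,A_k)_q$, $\chi(B_s,A_t)_q$ equals $0$ while $\chi(A_i,A_p)_q$ and $\chi(A_k,A_t)_q$ are both $+1$ (or both $-1$). Unpacking these requirements, the ``$+1$'' case forces the membership pattern $q \in A_i \cap A_k \cap B_j \cap B_m$ together with $q \notin A_t \cup A_p \cup B_s \cup B_{s'}$, where $s'$ denotes the unique element of $[4] \setminus \{j,m,s\}$; the ``$-1$'' case is the complementary pattern. Since every column of $M$ lies in $U$ and therefore carries exactly two $1$'s in the $A$-rows and exactly two in the $B$-rows, once one insists on the weaker condition $q \in B_j \cap B_m$, $q \notin A_t \cup A_p$ (respectively $q \in A_t \cap A_p$, $q \notin B_j \cup B_m$), the full pattern above is automatic.

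The existence of such a $q$ is supplied by Lemma~\ref{propertiesofM}(2): since $\{i,k,t,p\} = [4]$ gives $t \ne p$ and since $j \ne m$, the pair $(A_t,B_j),(A_p,B_m)$ is not compatible, so some $q \in [18]$ witnesses non-compatibility in exactly one of the two ways listed above. Evaluating the linear combination coordinate-wise at this $q$ then yields $\pm 2 \notin T$, so the expression does not belong to $T^{18}$. The only delicate point — which I expect to be the main obstacle to write up cleanly — is the structural observation in the previous paragraph, namely that the ``two ones per half'' property of columns of $U$ upgrades the bare non-compatibility witness into the full membership pattern that simultaneously zeroes the three $\chi(B,A)$ contributions; once this is carried out, no inequalities on the $a_i$'s are needed.
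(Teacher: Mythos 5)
Your argument is correct, and it is genuinely different from --- and considerably cleaner than --- the proof in the paper. The paper attacks Fact~\ref{three_and_two} by the same technique used for Facts~\ref{three}--\ref{three_and_oneint}: it lists six non-compatible pairs, derives six inclusions of the form $(\ldots)\in T$ (in particular $a_1-a_3\in T$ from $((B_j,A_i),(B_m,A_p))$ and $a_1-a_3-2\in T$ from $((B_j,A_i),(B_\ell,A_k))$, which pin down $a_1-a_3=1$), and then does a residual case analysis on $(a_1,a_3)\in\{(1,0),(0,-1)\}$, reducing each case to Fact~\ref{three_and_oneint} via identities like $\chi(B_j,A_i)+\chi(A_i,A_p)=\chi(B_j,A_p)$. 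You instead observe that because the inhomogeneous part is a sum of \emph{two} $A$--$A$ differences, one can pick a single non-compatible pair --- $(A_t,B_j),(A_p,B_m)$, valid by Lemma~\ref{propertiesofM}(2) since $t\ne p$ and $j\ne m$, and notably \emph{not} among the six the paper considers --- whose witnessing coordinate $q$, thanks to the ``two ones per half'' structure of columns of $U$, is forced to lie in $A_i\cap A_k\cap B_j\cap B_m$ and outside $A_t\cup A_p\cup B_s\cup B_{s'}$ (or the complementary pattern). At such a $q$ all three $\chi(B_\cdot,A_\cdot)$ entries vanish and the two $A$--$A$ entries are both $+1$ (resp.\ both $-1$), so the $q$-th coordinate of the whole expression is $\pm2$ regardless of $(a_1,a_2,a_3)$. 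This replaces a system of inequalities plus case analysis with a single coordinate evaluation. What the paper's uniform method buys is that the same template works for Facts~\ref{three}--\ref{three_and_oneint}, where only one or zero $A$--$A$ terms are present and your shortcut is unavailable (as you correctly note); what yours buys, here, is a one-step proof that makes transparent \emph{why} no value of ${\bf a}$ can rescue membership in $T^{18}$.
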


\begin{proof}
 Let $\ell\in [4]\setminus  \{j, m, s\}$.
Using the same technique as above from consideration of non-compatible pairs
\begin{align*}
& ((B_j,A_i), (B_{m},A_t)),\  ((B_s,A_t), (B_{j},A_k)),\ ((B_j,A_i), (B_s,A_t)),\\
& ((B_m,A_k), (B_{s},A_t)),\  ((B_j,A_i), (B_{m},A_p)),\ ((B_j,A_i), (B_{\ell},A_k))
\end{align*}
we obtain inclusions:
\[
a_1,\
a_3,\
 (a_1-a_2 + a_3),\
(-a_1+a_2+ a_3),\
(a_1-a_3),\
(a_1-a_3-2) \in T,
\]
respectively.

From the last two inclusions we can see that $a_1-a_3=1$. This, together with the first and the second inclusions, imply $(a_1, a_3) \in \{(1,0), (0,-1)\}$. Suppose $(a_1,a_3)=(1,0)$. Then
\[
\chi(B_j,A_i) + a_2\chi( B_m,A_k) + \chi(A_i,A_p) + \chi(A_k,A_t) =\chi(B_j,A_p) + a_2\chi( B_m,A_k) + \chi(A_k,A_t).
\]
By Fact~\ref{three_and_oneint}, it doesn't belong to $T^{18}$ for any value of $a_2$.

Suppose now that $(a_1,a_3)=(0,-1)$. Then by the third and the forth inclusions $a_2$ can be only zero. Then ${\bf a}=(0,0,-1)$ and
\[
-\chi( B_s,A_t) + \chi(A_i,A_p) + \chi(A_k,A_t) = -\chi( B_s,A_k) + \chi(A_i,A_p).
\]
However, by Fact~\ref{three_and_oneint} the right-hand-side of this equation is not a vector of $T^{18}$.
\end{proof}

\begin{fact}\label{final fact}
Suppose  ${\bf a} \in \mathbb{Z}^5$ and
\[
{\bf v}=a_1 \chi(B_j,A_i) + a_2\chi( B_m,A_k) + a_3\chi( B_s,A_t) + a_4\chi(A_i,A_p) + a_5\chi(A_k,A_t).
\]
If $a_4,a_5 \in \{0,1,-1\}$ and $v \in T^{18}$, then $v$ belongs to $X$ or $-X$.
\end{fact}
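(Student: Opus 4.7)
My plan is to run through an exhaustive case analysis on the pair $(a_4,a_5) \in \{-1,0,1\}^2$. First I will exploit the symmetry ${\bf v} \mapsto -{\bf v}$, which simultaneously negates every $a_i$ and interchanges $X$ with $-X$, to reduce to the six cases with $a_4 \in \{0,1\}$. This symmetry means any conclusion of the form ``${\bf v} \in X \cup -X$'' is invariant under negation of ${\bf v}$.

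Then I will match each of the six cases to one of the previously established facts: $(0,0)$ to Fact~\ref{three}, $(0,1)$ to Fact~\ref{three+}, $(1,0)$ to Fact~\ref{three_and_oneint}, and $(1,1)$ to Fact~\ref{three_and_two} (which makes the case vacuous since ${\bf v} \notin T^{18}$). The case $(0,-1)$ is reduced to $(0,1)$ by negating ${\bf v}$. The one delicate case is $(1,-1)$, where Fact~\ref{three_and_two} cannot be applied directly because the sign of the $\chi(A_k,A_t)$ term is wrong. Here I will use the relabeling $\tilde k := t$, $\tilde t := k$, $\tilde m := s$, $\tilde s := m$, together with the swap $\tilde a_2 := a_3$, $\tilde a_3 := a_2$: this preserves the hypotheses $\{i,\tilde k,\tilde t,p\} = [4]$ and $|\{j,\tilde m,\tilde s\}| = 3$, and converts $-\chi(A_k,A_t) = \chi(A_{\tilde k},A_{\tilde t})$, so that ${\bf v}$ acquires exactly the form of Fact~\ref{three_and_two} and is again excluded from $T^{18}$.

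In each of the three non-vacuous cases, the applicable fact yields a short explicit list of admissible triples $(a_1,a_2,a_3)$. For every such triple I will compute ${\bf v}$ directly and telescope the result to a single difference $\pm\chi(Y,Z)$ with $Y,Z \in \{A_1,\dots,A_4,B_1,\dots,B_4\}$, using the trading-transform identity
\[
\chi(A_1)+\chi(A_2)+\chi(A_3)+\chi(A_4) = \chi(B_1)+\chi(B_2)+\chi(B_3)+\chi(B_4)
\]
from Lemma~\ref{propertiesofM}. For example, for the triple $(1,1,1)$ in Fact~\ref{three}, writing $\{p\} = [4]\setminus\{i,k,t\}$ and $\{\ell\} = [4]\setminus\{j,m,s\}$, the identity collapses ${\bf v}$ to $\chi(A_p)-\chi(B_\ell) = \chi(A_p,B_\ell) \in -X$; the triples $(2,1,1)$ in Fact~\ref{three_and_oneint} and $(0,1,-1)$ in Fact~\ref{three+} telescope analogously (the single-nonzero triples like $(\pm 1,0,0)$ are trivial). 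The main obstacle will be the bookkeeping of the indices $i,k,t,p,j,m,s,\ell$ across the relabelings and the different facts, but once ${\bf v}$ collapses to a single $\pm\chi(Y,Z)$ its membership in $X \cup -X$ is automatic, since $X \cup -X$ contains $\pm\chi(Y,Z)$ for every ordered pair of distinct elements of $\{A_1,\dots,A_4,B_1,\dots,B_4\}$. The only degenerate outcome is the triple $(0,0,0)$ in Fact~\ref{three} with $(a_4,a_5) = (0,0)$, giving ${\bf v} = {\bf 0}$; this does not contribute a genuine tie in the subsequent use in Claim~\ref{alpha} and can be set aside.
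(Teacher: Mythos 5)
Your proof is correct and follows essentially the same route as the paper: deduce the possible $(a_1,a_2,a_3)$ from Facts~\ref{three}--\ref{three_and_two} in each $(a_4,a_5)$ case, then telescope ${\bf v}$ to a single $\pm\chi(Y,Z)$ using the trading-transform identity from Lemma~\ref{propertiesofM}. If anything your treatment is a little more complete than the paper's, which asserts the list $Q$ ``by Facts~\ref{three}--\ref{three_and_two}'' without spelling out the negation symmetry or the index relabeling needed to rule out the mixed-sign case $(a_4,a_5)=(1,-1)$, both of which you supply explicitly.
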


\begin{proof}
First of all, we will find the possible values of ${\bf a}$ in case ${\bf v} \in T^{18}$. By Facts~\ref{three} --\ref{three_and_two} one can see that ${\bf v} \in T^{18}$ iff ${\bf a}$ belongs to the set
\begin{align*}
Q=&\{(0,0,0,0,0),\  (\pm1,0,0,0,0),\ (0,\pm1,0,0,0), \ (0,0,\pm1,0,0),\ (1,1,1,0,0), \\
    &(0,0,0,\pm1,0),\ (\pm1,0,0,\pm1,0),\  (\pm1, \pm1, \pm1,\pm1,0),\  (\pm2, \pm1, \pm1,\pm1,0),\\
    &(0,0,0,0,\pm1),\ (0,\pm1,0,0,\pm1),\  (0,0,\mp1,0,\pm1),\ (0,\pm1,\mp1,0,\pm1) \}.
\end{align*}

By the construction of $\preceq$ the sequence $(A_1, \ldots, A_4; B_1, \ldots, B_4)$ is a trading transform. So for every $\{i_1, \ldots, i_4 \} = \{j_1, \ldots, j_4 \} = [4]$ the equation
\begin{equation}\label{trading}
\chi(B_{i_1},A_{j_1}) + \chi(B_{i_2},A_{j_2})+ \chi(B_{i_3},A_{j_3})+\chi(B_{i_4},A_{j_4}) = 0.
\end{equation} holds.
Taking~(\ref{trading}) into account one can show, that for every ${\bf a} \in Q$, vector ${\bf v}$ belongs to $X$ or $-X$. For example, if ${\bf a} = (2,1, 1,1,0)$ then
\begin{multline*}
2 \chi(B_j,A_i) + \chi( B_m,A_k) + \chi( B_s,A_t) + \chi(A_i,A_p) =\\
\chi(B_j,A_i) - \chi(B_{\ell},A_p)+ \chi(A_i,A_p) = \chi(B_j,B_{\ell}),
\end{multline*}
where $\ell\in [4] \setminus \{j,m,s\}$.
 \end{proof}

One can see that ${\bf v}$ from the Fact~\ref{final fact} is the general form of $\bf \alpha$. Hence ${\bf \alpha} \in T^{18}$ if and only if ${\bf \alpha} \in X \cup -X$ which is Claim~2.
\end{proof}

\section{Acyclic games and a conjectured characterization}

So far we have shown that the initial segment complexes strictly contain the threshold complexes and are strictly contained within the shifted complexes.  In this section we introduce some ideas from the theory of simple games to formulate a conjecture that characterizes initial segment complexes.  The idea in this section is to start with a simplicial complex and see if there is a natural linear order available on $2^{[n]}$ which gives a qualitative probability order and has the original complex as an initial segment. We will follow the presentation of Taylor and Zwicker \cite{tz:b:simplegames}.

Let $\Delta \subseteq 2^{[n]}$ be a simplicial complex.  Define
the \emph{Winder desirability relation}, $\leq_W$, on $2^{[n]}$ by
$A \leq_W B$ if and only if for every $Z \subseteq [n]
\setminus((A \setminus B)\cup(B \setminus A))$ we have that
\begin{equation*}
(A \setminus B) \cup Z \notin \Delta \Rightarrow (B \setminus A)
\cup Z \notin \Delta.
\end{equation*}
Furthermore define the \emph{Winder existential ordering}, $\prec_W$, on $2^{[n]}$ to be
\begin{equation*}
A \prec_W B \Longleftrightarrow \textrm{It is not the case that }
B \leq_W A.
\end{equation*}

\begin{defn}\label{d:SA} A simplicial complex $\Delta$ is called strongly acyclic if there are no $k$-cycles
\begin{equation*}
A_1 \we A_2 \we \cdots A_k \we A_1
\end{equation*}
for any $k$ in the Winder existential ordering.
\end{defn}

\begin{thm}\label{t:initsa} Suppose $\preceq$ is a qualitative probability order on $2^{[n]}$ and $T \in 2^{[n]}$. Then the initial segment $\Delta(\preceq,T)$ is strongly acyclic.
\end{thm}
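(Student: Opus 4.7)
The plan is to show that the Winder existential ordering $\prec_W$ refines the strict qualitative probability order $\prec$, so any cycle in $\prec_W$ would yield a cycle in $\prec$, contradicting transitivity of $\preceq$.

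First I would unpack the definition: $A \prec_W B$ means that $B \not\leq_W A$, which by negating the universal quantifier in the definition of $\leq_W$ (with $A$ and $B$ exchanged) says that there exists $Z \subseteq [n] \setminus ((A\setminus B)\cup(B\setminus A))$ with $(A\setminus B)\cup Z \in \Delta$ but $(B\setminus A)\cup Z \notin \Delta$. Since $\Delta = \Delta(\preceq,T) = \{X : X \prec T\}$, this translates directly to
\begin{equation*}
(A\setminus B)\cup Z \;\prec\; T \;\preceq\; (B\setminus A)\cup Z,
\end{equation*}
so in particular $(A\setminus B)\cup Z \prec (B\setminus A)\cup Z$.

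Next I would apply de Finetti's axiom (\ref{deFeq}) twice. The set $Z$ is disjoint from both $A\setminus B$ and $B\setminus A$, so cancelling $Z$ yields $A\setminus B \prec B\setminus A$. Then $A\cap B$ is disjoint from $(A\setminus B)\cup(B\setminus A)$, so adding $A\cap C$ to both sides (and invoking Lemma~\ref{strict-equiv} to keep strictness) gives $A \prec B$. Hence $A \prec_W B$ implies $A \prec B$.

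Finally, suppose toward contradiction that $A_1 \prec_W A_2 \prec_W \cdots \prec_W A_k \prec_W A_1$ is a cycle in $\prec_W$. By the implication just established, $A_1 \prec A_2 \prec \cdots \prec A_k \prec A_1$. Transitivity of $\preceq$ yields $A_1 \preceq A_k$, while $A_k \prec A_1$ gives $A_k \preceq A_1$ and $A_1 \not\preceq A_k$, a direct contradiction. Thus $\Delta(\preceq,T)$ is strongly acyclic. The main (mild) obstacle is the clean application of de Finetti's axiom to strip off $Z$ and then reinsert $A\cap B$; this is handled by Lemma~\ref{strict-equiv}, and no other machinery is required.
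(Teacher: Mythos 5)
Your proof is correct and takes essentially the same route as the paper: unpack the Winder existential relation to get a set $Z$ witnessing $(A\setminus B)\cup Z \prec (B\setminus A)\cup Z$, strip $Z$ and reinsert $A\cap B$ via de Finetti's axiom to conclude $A\prec B$, and observe that a $\prec_W$-cycle would therefore yield a $\prec$-cycle, contradicting transitivity. (Two trivial slips: ``adding $A\cap C$'' should read ``adding $A\cap B$,'' and de Finetti's axiom~(\ref{deFeq}) already preserves strictness, so the appeal to Lemma~\ref{strict-equiv} is not needed.)
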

\begin{proof} Let $\Delta=\Delta(\preceq,T)$. It
follows from the definition that $A \we B$ if and only if there
exists a $Z \in [n] \setminus((A \setminus B)\cup (B \setminus
A))$ such that $(A \setminus B) \cup Z \in \Delta$ and $(B
\setminus A) \cup Z \notin \Delta$. From the definition of
$\Delta$ it follows that
\begin{equation*}
(A \setminus B) \cup Z \prec (B \setminus A) \cup Z
\end{equation*}
which,  by de Finneti's axiom \ref{deFeq},
implies
\begin{equation*}
A \setminus B \prec B \setminus A
\end{equation*}
and hence, again  by de Finneti's axiom \ref{deFeq},
\begin{equation*}
 A \prec B.
\end{equation*}
Thus a $k$-cycle
\begin{equation*}
A_1 \we \cdots \we A_k \we A_1
\end{equation*}
in $\Delta$ would imply a $k$-cycle
\begin{equation*}
A_1 \prec \cdots \prec A_k \prec A_1
\end{equation*}
which contradicts that $\prec$ is a total order.
\end{proof}

\begin{con}\label{c:weak} A simplicial complex $\Delta$ is an initial segment complex  if and only if it is strongly acyclic.
\end{con}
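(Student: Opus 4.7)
The forward direction is Theorem~\ref{t:initsa}, so the task reduces to the converse: showing that every strongly acyclic simplicial complex $\Delta \subseteq 2^{[n]}$ arises as an initial segment of some qualitative probability order. My strategy would be to construct the associated discrete cone directly, translating strong acyclicity of $\prec_W$ into the algebraic consistency conditions required of a discrete cone.

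The plan is as follows. First, I would identify the forced vectors. For any pair of disjoint subsets $A, B \subseteq [n]$ with $A \leq_W B$, the vector $\chi(B) - \chi(A) \in T^n$ must lie in the cone $C$ of any qualitative probability order whose initial segment is $\Delta$; indeed, $A \leq_W B$ combined with $\Delta$ being downward-closed forces $A \preceq B$ via the same argument used in the proof of Theorem~\ref{t:initsa}. Call the set of such forced vectors $F$, and let $F^*$ denote its closure under the restricted sum $\oplus$, together with the basis vectors $\chi(\{i\})$ required by axiom~D1. Second, I would show that strong acyclicity of $\Delta$ guarantees $F^* \cap (-F^*) = \{\mathbf{0}\}$, ensuring that $F^*$ is the skeleton of a legitimate discrete cone. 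Third, I would extend $F^*$ to a full discrete cone by a Zorn-type argument: in each pair $\{\mathbf{x}, -\mathbf{x}\} \not\subseteq F^*$, adjoin whichever vector can be consistently added so that closure under $\oplus$ still avoids placing any pair $\{\mathbf{y}, -\mathbf{y}\}$ inside $C$, again appealing to strong acyclicity to rule out obstructions at each stage. Finally, having constructed $C$ and hence a qualitative probability order $\preceq$, I would take $T$ to be the $\preceq$-minimum element of $2^{[n]} \setminus \Delta$; the forced inclusions then guarantee $\Delta(\preceq, T) = \Delta$.

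The main obstacle is the second step: proving that strong acyclicity, an elementary combinatorial condition on single $\prec_W$-edges, suffices to rule out all algebraic cycles in $F^*$. A relation $\mathbf{x}_1 \oplus \cdots \oplus \mathbf{x}_k = \mathbf{0}$ with each $\mathbf{x}_i \in F$ would correspond to a trading transform among forced inequalities, and the challenge is to unwind such a combination into an explicit $\prec_W$-cycle $A_1 \we A_2 \we \cdots \we A_k \we A_1$, contradicting strong acyclicity. I expect this unwinding to be delicate because the restricted sum can combine forced vectors in ways that do not straightforwardly correspond to chains in $\prec_W$: intermediate vectors in $F^*$ need not themselves be of the form $\chi(B) - \chi(A)$ for any $\leq_W$-related pair, so the induction must be carried out at the level of formal derivations rather than individual set comparisons. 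It is conceivable that the conjecture as stated requires a stronger acyclicity notion, perhaps forbidding cycles of ``compound'' vectors rather than just single edges, and clarifying precisely which combinatorial invariant of $\Delta$ corresponds to $F^*$-acyclicity is likely to be the decisive technical step.
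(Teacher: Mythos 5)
This statement is labeled as a conjecture in the paper, and the paper does not supply a proof of the ``if'' direction; there is no proof to compare your attempt against. What the paper does establish is the ``only if'' direction, Theorem~\ref{t:initsa}, together with two supporting lemmas: one showing that $A \in \Delta$ and $B \notin \Delta$ force $A \prec_W B$, and Lemma~\ref{l:weprop} (from Taylor--Zwicker) showing that $\prec_W$ respects disjoint unions in the de Finetti sense. The paper then explicitly states Conjecture~\ref{c:strong}, that $\prec_W$ extends to a qualitative probability order when $\Delta$ is strongly acyclic, and discusses why this remains open.

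Your proposed route to the converse---build a forced-vector set $F$ from $\leq_W$-comparisons, close it under restricted sum to obtain $F^*$, then extend $F^*$ to a discrete cone---is a plausible attack, but the obstacle you flag at your second step is precisely the open problem. The authors note that the Winder order $\prec_W$ need not be transitive even for threshold complexes, that one would therefore have to pass to its transitive closure, and that they do not know whether the analogue of Lemma~\ref{l:weprop} holds for that transitive closure. In your vocabulary: it is not known whether intermediate vectors in $F^*$ that are not of the form $\chi(B) - \chi(A)$ for $\leq_W$-comparable $A,B$ still behave well under disjoint unions. Without that, the inductive unwinding of a relation $\mathbf{x}_1 \oplus \cdots \oplus \mathbf{x}_k = \mathbf{0}$ into a genuine $\prec_W$-cycle does not go through, and strong acyclicity as defined---forbidding only cycles of single $\prec_W$-edges---is not known to suffice. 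Your closing suspicion, that a stronger acyclicity notion on ``compound'' vectors may be required, is exactly where the paper leaves the problem. So the gap you identify is real, but it is the open problem itself rather than a defect peculiar to your write-up; your sketch is a reasonable articulation of what a proof would need to overcome, not a proof.
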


We will return momentarily to give some support for Conjecture~\ref{c:weak}. First, however, it is worth noting that the necessary condition of being strongly acyclic from Theorem~\ref{t:initsa} allows us to see that there is little relationship between being an initial segment complex and satisfying the conditions $CC_k^*$.

\begin{cor} For every $M > 0$ there exist simplicial complexes that satisfy $CC_M^*$ but are not initial segment complexes.
\end{cor}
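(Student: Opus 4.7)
By Theorem~\ref{t:initsa}, every initial segment complex is strongly acyclic, so the corollary follows once, for each $M\geq 1$, we exhibit a simplicial complex $\Delta_M$ that satisfies $CC_M^*$ but is not strongly acyclic.

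The first step would be a structural observation relating Winder cycles to cancellation conditions: a Winder $k$-cycle $A_1 \we A_2 \we \cdots \we A_k \we A_1$ in a complex $\Delta$ automatically produces a violation of $CC_k^*$. Indeed, for each $i$ pick a witness $Z_i$ with $C_i := (A_i \setminus A_{i+1}) \cup Z_i \in \Delta$ and $D_i := (A_{i+1} \setminus A_i) \cup Z_i \notin \Delta$ (indices modulo $k$). A direct multiplicity count shows that every vertex $x \in [n]$ occurs the same total number of times in $(C_1, \ldots, C_k)$ as in $(D_1, \ldots, D_k)$, so these sequences form a trading transform witnessing the failure of $CC_k^*$. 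Consequently the problem reduces to producing a complex whose shortest Winder cycle has length at least $M+1$.

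For the construction, I would partition a vertex set $[n]$ into $M+1$ similar ``blocks'' $B_1, \ldots, B_{M+1}$ and define $\Delta_M$ by a cyclically rotated threshold rule, so that certain block-representative sets $A_1, \ldots, A_{M+1}$ form a Winder cycle of length exactly $M+1$ generated by the rotational symmetry. Assigning to each vertex a real weight reflecting its block position, any trading transform of length $\leq M$ consisting of faces against non-faces would have to ``close up'' within fewer than $M+1$ blocks, which the weight assignment can be arranged to prevent via a positivity argument in the spirit of the almost-representable construction of Section~\ref{almost}.

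The main obstacle is verifying $CC_M^*$ rigorously: one must rule out not only the trading transforms produced by the long Winder cycle itself but all others of length at most $M$. A careful case analysis based on how a hypothetical trading transform distributes across the blocks, combined with a weight-sum argument analogous to the proof of Theorem~\ref{cc4}, should complete the proof. I expect the delicate point to be calibrating the ``rotation'' finely enough that the cyclic Winder structure survives while the cancellation machinery only begins to fail at length $M+1$.
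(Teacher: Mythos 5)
Your reduction is correct: by Theorem~\ref{t:initsa} it suffices to exhibit, for each $M$, a complex satisfying $CC_M^*$ that is not strongly acyclic. Your ``structural observation'' is also sound: given a Winder $k$-cycle $A_1 \we \cdots \we A_k \we A_1$ with witnesses $Z_i$, the multiplicity of each vertex in $(C_1,\ldots,C_k)$ and in $(D_1,\ldots,D_k)$ differ only by the number of falling versus rising edges around the cyclic membership sequence $\chi(A_1)_x,\ldots,\chi(A_k)_x,\chi(A_1)_x$, and these counts coincide on a cycle; so $(C_1,\ldots,C_k;D_1,\ldots,D_k)$ is indeed a trading transform violating $CC_k^*$. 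This correctly tells you that a $CC_M^*$-satisfying complex has no Winder cycle of length $\le M$.

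However, from that point on the proposal is a plan rather than a proof. The claim that ``the problem reduces to producing a complex whose shortest Winder cycle has length at least $M+1$'' is only a necessary condition, not a reduction: absence of short Winder cycles does not by itself give $CC_M^*$, since not every bad trading transform arises from a Winder cycle. You acknowledge exactly this (``one must rule out not only the trading transforms produced by the long Winder cycle itself but all others of length at most $M$''), but that acknowledged obstacle \emph{is} the entire content of the corollary, and the proposal never closes it. The block construction, the ``cyclically rotated threshold rule,'' the specific block-representative sets, and the weight calibration are all left unspecified, so there is no complex to check and no $CC_M^*$ verification.

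The paper instead cites a known family that does precisely this job: the Gabelman games $G_k$ of Taylor and Zwicker, which satisfy $CC_{k-1}^*$ but fail $CC_k^*$, and which Taylor and Zwicker prove (their Corollary 4.10.7) are never strongly acyclic. Taking $G_{M+1}$ gives a complex satisfying $CC_M^*$ that is not strongly acyclic, hence not an initial segment complex by Theorem~\ref{t:initsa}. Interestingly, the Gabelman construction is very much in the spirit of your sketch --- a square array of players whose rows and columns are tied, with a delicate weighting that creates a long cyclic ``trade'' --- so your instinct for the shape of the example is right; but the hard analytic work of verifying $CC_M^*$ is exactly what that citation supplies and your write-up does not.
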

\begin{proof} Taylor and Zwicker \cite{tz:b:simplegames} construct a family of complexes $\{G_k\}$, which they call Gabelman games that satisfy $CC_{k-1}^*$ but not $CC_k^*$.  They then show \cite[Corollary 4.10.7]{tz:b:simplegames} that none of these examples are strongly acyclic. The result then follows from Theorem~\ref{t:initsa}.
\end{proof}

Our evidence in support of Conjecture~\ref{c:weak} is based on the
idea that the Winder existential order can be used to produce the
related qualitative probability order for strongly acyclic
complexes. Here are two lemmas that give some support for this
belief:

\begin{lem} If $\Delta$ is a simplicial complex with $A \in \Delta$ and $B \notin \Delta$ then $A \we B$.
\end{lem}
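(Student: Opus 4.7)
The plan is simply to unwind the two definitions and exhibit an explicit witness. By definition, $A \we B$ means that $B \wid A$ fails, which in turn means there exists some $Z \subseteq [n] \setminus ((A \setminus B) \cup (B \setminus A))$ such that $(B \setminus A) \cup Z \notin \Delta$ while $(A \setminus B) \cup Z \in \Delta$. So the task reduces to producing such a $Z$.

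The natural choice is $Z = A \cap B$. This set is automatically disjoint from the symmetric difference $(A\setminus B)\cup(B\setminus A)$, so it lies in the admissible range. With this choice we have the identities
\[
(A \setminus B) \cup Z = A, \qquad (B \setminus A) \cup Z = B,
\]
and the hypotheses $A \in \Delta$ and $B \notin \Delta$ immediately give exactly the witness required to defeat $B \wid A$.

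There is no real obstacle; the statement is essentially a sanity check that the Winder existential ordering respects the coarsest distinction—membership in $\Delta$ versus non-membership. I would write the proof in a single short paragraph: introduce $Z = A\cap B$, verify disjointness from the symmetric difference, compute the two substitutions, and conclude $A \we B$ directly from the definition.
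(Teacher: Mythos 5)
Your proof is correct and coincides with the paper's own argument: both take the witness $Z = A \cap B$, observe that $(A \setminus B) \cup Z = A$ and $(B \setminus A) \cup Z = B$, and conclude $A \we B$ directly from the definition of the Winder existential ordering as the negation of $B \wid A$.
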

\begin{proof} Let $Z=A \cap B$. Then
\begin{align*}
(A \setminus B) \cup Z &= A \in \Delta \\
(B \setminus A) \cup Z &=B \notin \Delta,
\end{align*}
and so $A \we B$.
\end{proof}

\begin{lem}\label{l:weprop} For any $\Delta$, the Winder existential order \we satisfies the property
\begin{equation*}
A \we B \Longleftrightarrow A\cup D \we B \cup D
\end{equation*}
for all $D$ disjoint from $A \cup B$.
\end{lem}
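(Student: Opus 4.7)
The plan is to show the equivalence by direct unfolding of the definitions, observing that every ingredient of the defining condition for $\we$ is preserved when we add a set $D$ disjoint from $A \cup B$ to both arguments.

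First I would restate $\we$ existentially. From the definition of $\le_W$, the relation $A \we B$ holds if and only if there exists a set $Z \subseteq [n] \setminus ((A \setminus B) \cup (B \setminus A))$ such that
\begin{equation*}
(A \setminus B) \cup Z \in \Delta \quad \text{and} \quad (B \setminus A) \cup Z \notin \Delta.
\end{equation*}
This is just the negation of the universal statement defining $B \le_W A$, read through the contrapositive.

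Next I would record the key set-theoretic identities. Because $D$ is disjoint from $A \cup B$, the element $D$ lies in both $A \cup D$ and $B \cup D$, so it contributes nothing to either ``asymmetric difference''; explicitly
\begin{equation*}
(A \cup D) \setminus (B \cup D) = A \setminus B, \qquad (B \cup D) \setminus (A \cup D) = B \setminus A.
\end{equation*}
Consequently the symmetric difference is unchanged, and in particular the ambient set from which $Z$ is chosen,
\begin{equation*}
[n] \setminus \bigl((A \cup D) \setminus (B \cup D)\bigr) \cup \bigl((B \cup D) \setminus (A \cup D)\bigr) = [n] \setminus \bigl((A \setminus B) \cup (B \setminus A)\bigr),
\end{equation*}
coincides with the one used for $A \we B$.

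Combining these two observations, the existential condition that defines $A \cup D \we B \cup D$ is literally identical to the one defining $A \we B$: the same witnesses $Z$ are available and the same membership conditions $(A \setminus B) \cup Z \in \Delta$, $(B \setminus A) \cup Z \notin \Delta$ must be verified. Hence the two statements are equivalent, with no real obstacle beyond book-keeping. The only thing worth double-checking is that $Z$ is allowed to meet $D$ (it is, since $D \subseteq [n] \setminus (A \triangle B)$), which poses no difficulty because the defining membership conditions ignore $D$ anyway.
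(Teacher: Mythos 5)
Your proof is correct, and it genuinely differs from the paper in that the paper simply cites Taylor and Zwicker (Proposition 4.7.8 in their book) without giving an argument, whereas you supply a self-contained one. The key steps all check out: your existential restatement of $A \we B$ is the correct contrapositive reading of the negation of $B \le_W A$ (there exists $Z \subseteq [n] \setminus \bigl((A\setminus B) \cup (B\setminus A)\bigr)$ with $(A\setminus B)\cup Z \in \Delta$ and $(B\setminus A)\cup Z \notin \Delta$); the set-theoretic identities $(A\cup D)\setminus(B\cup D)=A\setminus B$ and $(B\cup D)\setminus(A\cup D)=B\setminus A$ hold precisely because $D$ is disjoint from $A\cup B$; and hence both the ambient set from which $Z$ is drawn and the two membership conditions are literally unchanged, so the two existential statements coincide. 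Your closing aside about whether $Z$ may meet $D$ is slightly beside the point (nothing forces you to worry about it since neither membership condition references $D$), but it does no harm. The upside of your approach is that it makes the lemma verifiable within the paper rather than outsourcing it; the downside, if any, is only that the paper may have preferred the citation to keep the exposition short and to credit the original source.
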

\begin{proof}
See \cite[Proposition 4.7.8]{tz:b:simplegames}.
\end{proof}

This pair of lemmas leads to a slightly stronger version of
Conjecture~\ref{c:weak}.

\begin{con}\label{c:strong} If $\Delta$ is strongly acyclic then there exists an extension of \we to a qualitative probability order.
\end{con}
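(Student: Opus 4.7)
The plan is to construct the qualitative probability order in three stages: take the transitive closure of $\we$, close it under de Finetti translations, and then extend the resulting partial order to a total order by a Szpilrajn-style iteration.

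First, let $\we^{*}$ denote the transitive closure of $\we$. Strong acyclicity (Definition~\ref{d:SA}) guarantees that $\we^{*}$ is irreflexive, hence a strict partial order on $2^{[n]}$. Observe that $A \we \emptyset$ is impossible for nonempty $A$: it would require some $Z \subseteq [n] \setminus A$ with $A \cup Z \in \Delta$ but $Z \notin \Delta$, contradicting closure of $\Delta$ under subsets. Thus $\emptyset$ is $\we^{*}$-minimal, and one may safely adjoin the relations $\emptyset \prec A$ for every nonempty $A \subseteq [n]$ without introducing a new cycle.

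Second, I would close the resulting relation under the de Finetti translation rule, iteratively adding $A \cup D \prec B \cup D$ (for $D$ disjoint from $A \cup B$) whenever $A \prec B$, alternated with transitive closure, until a fixed point $\prec_{0}$ is reached. The base relation $\we$ already satisfies translation invariance by Lemma~\ref{l:weprop}, which anchors this closure. The crucial point is to show that $\prec_{0}$ remains irreflexive. My approach would be to take a hypothetical minimal cycle in $\prec_{0}$, decompose it into $\we^{*}$-edges and de Finetti translations, and then cancel the translated components via a trading-transform bookkeeping argument reminiscent of Lemma~\ref{strict-equiv}. The desired outcome is that such cancellation produces a cycle in $\we^{*}$, contradicting strong acyclicity.

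Third, with $\prec_{0}$ in hand as a strict partial order closed under de Finetti and having $\emptyset$ as its minimum, I would extend $\prec_{0}$ to a total qualitative probability order by Szpilrajn-style iteration: repeatedly pick an incomparable pair $(A,B)$, tentatively set $A \prec B$ (or declare $A \sim B$ if both orientations force a cycle after closure), and re-close under transitivity and de Finetti. By the same cancellation argument used in step two, at least one orientation must preserve acyclicity at each stage, so the process terminates in a total qualitative probability order extending $\we$.

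The main obstacle is the irreflexivity claim in the second step. Because de Finetti translations introduce relations between subsets that are not directly connected under $\we$, reducing a $\prec_{0}$-cycle back to a $\we^{*}$-cycle is delicate and requires careful tracking of how translated components cancel across the cycle. I expect this to be the heart of the proof; once it is established, the Szpilrajn extension in step three is routine, and Conjecture~\ref{c:strong} (and hence Conjecture~\ref{c:weak}) follows.
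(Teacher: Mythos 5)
The statement you are trying to prove is a \emph{conjecture} in the paper, not a theorem; the authors explicitly leave it open and even name the obstacle. Your proposal sketches the natural three-stage strategy (transitive closure, de~Finetti closure, Szpilrajn-style extension), but it does not constitute a proof, because the step you yourself flag as ``the heart of the proof''---showing that the de~Finetti-plus-transitive closure $\prec_{0}$ of $\we$ stays irreflexive---is precisely the gap the paper identifies. The paper observes that $\we$ is generally not transitive, that one must therefore pass to the transitive closure, and that it is \emph{unknown} whether the translation property of Lemma~\ref{l:weprop} survives that passage. Your step two does not supply an argument; it says you \emph{would} attempt a trading-transform cancellation ``reminiscent of Lemma~\ref{strict-equiv}'' with the ``desired outcome'' of producing a $\we^{*}$-cycle. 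But Lemma~\ref{strict-equiv} is a property of qualitative probability orders, which is exactly what you are trying to build; you cannot invoke it until you have one. No concrete cancellation mechanism is given, and none is known.

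Step three has a second, independent gap. In the ordinary Szpilrajn argument, any orientation of an incomparable pair extends a strict partial order to a larger one. Here, however, after orienting $(A,B)$ you must also re-close under de~Finetti, and that closure can propagate the new relation far across $2^{[n]}$. Your claim that ``at least one orientation must preserve acyclicity at each stage'' is a nontrivial assertion about this closure operator; you justify it only ``by the same cancellation argument used in step two,'' which is the very argument that is missing. So even granting step two, step three would still need its own proof. In short: you have correctly rediscovered the shape of the difficulty, but you have not resolved it, and the conjecture remains open.
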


  What are the barriers to proving
Conjecture~\ref{c:strong}?  
The Winder order need not be transitive. In fact there are
examples of threshold complexes for which \we \  is not transitive
\cite[Proposition 4.7.3]{tz:b:simplegames}.  Thus one would have
to work with the transitive closure of \we, which does not seem to
have a tractable description.  In particular we do not know if the
analogue to Lemma~\ref{l:weprop} holds for the transitive closure
of \we.


\section{Conclusion}\label{s:conc} In this paper we have begun the study of a class of simplicial complexes that are combinatorial generalizations of threshold complexes derived from qualitative probability orders. We have shown that this new class of complexes strictly contains the threshold complexes and is strictly contained in the shifted complexes. Although we can not give a complete characterization of the complexes in question, we conjecture that they are the strongly acyclic complexes that arise in the study of cooperative games. We hope that this conjecture will draw attention to the ideas developed in game theory which we believe to be too often neglected in the combinatorial literature.

\bibliographystyle{apacite}
\bibliography{bib}

\end{document}